\newtheorem {theorem}    {Theorem}[section]
\newtheorem {lemma}      [theorem]    {Lemma}
\newtheorem {proposition}[theorem]    {Proposition}
\newcommand{\bb}{\mathbb}
\renewcommand{\rm}{\mathrm}
\newcommand{\cal}{\mathcal}
\newcommand{\GGL}{\mathrm{GL}}
\newcommand{\UU}{\mathrm{U}}
\newcommand{\Fq}{\bb{F}_q}
\newcommand{\fq}{(\bb{F}_q)}
\theoremstyle{definition}
\newcommand{\so}{\mathrm{SO}}
\renewcommand{\sp}{\mathrm{Sp}}
\newcommand{\BB}{\mathbb}
\numberwithin{equation}{section}
\begin{document}

\title{Lusztig correspondence and the finite Gan-Gross-Prasad problem}

\date{\today}

\author[Zhicheng Wang]{Zhicheng Wang}

\address{School of Mathematical Science, Suzhou University, Suzhou 310027, Jiangsu, P.R. China}

\email{11735009@zju.edu.cn}
\subjclass[2010]{Primary 20C33; Secondary 22E50}

\begin{abstract}
The Gan-Gross-Prasad problem is to describe the restriction of representations of a classical group $G$ to smaller groups $H$ of the same kind. In this paper, we solved the Gan-Gross-Prasad problem over finite fields completely. In previous work \cite{LW1,LW2,LW3,Wang}, we study the Gan-Gross-Prasad problem for unipotent representations of finite classical groups. The main tools used are the Lusztig correspondence as well as a formula of Reeder \cite{R} for the pairings of Deligne-Lusztig characters. We give a reduction decomposition of Reeder's formula, and deduce the Gan-Gross-Prasad problem for arbitrary representations from the unipotent representations by Lusztig correspondence.
\end{abstract}

\maketitle

\section{Introduction}\label{sec1}

In \cite{GP1}, Gross and Prasad formulated conjectures which relate the periods of automorphic forms on $\so_{n+1}\times \so_n$ along the diagonal subgroup $\so_n$ with central $L$-values, where $\so_{n+1}$ and $\so_n$ are special orthogonal groups over a local field. In \cite{GGP1}, Gan, Gross, and Prasad extended these local conjectures to all classical groups. The local Gan-Gross-Prasad conjecture has been resolved for orthogonal groups by J.-L. Waldspurger and C. M\oe glin \cite{W2, W3, W4, MW}, for unitary groups by R. Beuzart-Plessis \cite{BP1, BP2} and W. T. Gan and A. Ichino \cite{GI}, and for symplectic-metaplectic groups by H. Atobe \cite{Ato}.

The main goal of this paper is to consider the Gan-Gross-Prasad conjecture over finite fields. Partial results can be proved via known results for local fields \cite[section 4,5]{GGP2}. However, that way does not allow one to get the complete answer to the finite Gan-Gross-Prasad problem. In previous works \cite{LW1,LW3}, we studied the Gan-Gross-Prasad problem of unipotent representations of finite unitary groups and in \cite{LW2,Wang} for finite orthogonal groups and finite symplectic groups. This paper will give a formula reducing the Gan-Gross-Prasad problem of arbitrary representations to unipotent representations.

There are two types of periods in the Gan-Gross-Prasad conjecture: the Bessel periods and the Fourier-Jacobi periods. One can deduce the Fourier-Jacobi case from the Bessel case by the standard arguments of theta correspondence and see-saw dual pairs, which are used in the proof of the local Gan-Gross-Prasad conjecture (see \cite{GI, Ato}). Theta correspondence over finite fields is described explicitly in \cite{AMR, P1, P2}. So in this paper, we only focus on the Bessel case.

\subsection{Representation of finite reductive groups}

Let $\overline{\mathbb{F}}_q$ be an algebraic closure of a finite field $\mathbb{F}_q$, which is of characteristic $p>2$. Let $G$ be a connected reductive algebraic group defined over $\Fq$, with Frobenius map $F$, and let $Z$ be the center of $G^F$. We assume that $q$ is large enough such that the main theorem in \cite{S2} holds, namely we assume that for every $F$-stable maximal torus $T$ of $G$, $T^F/Z$ has at least two Weyl group orbits of regular characters.

 Let $H$ be a subgroup of $G$, and $\pi\in\rm{Irr}(G^F)$ and $\sigma\in\rm{Irr}(H^F)$. We write
\[
\langle \pi,\sigma \rangle_{H^F} = \dim \mathrm{Hom}_{H^F}(\pi,\sigma ).
\]
Let $P$ be an F-stable parabolic subgroup of $G$ with Levi factor $L$. Let $\sigma\in\rm{Irr}(L^F)$. We write the parabolic induction
\[
I^G_P(\sigma):=\rm{Ind}^{G^F}_{P^F}(\sigma).
\]

For an $F$-stable maximal torus $T$ of $G$ and a character $\theta$ of $T^F$,  let $R_{T,\theta}^G$ be the virtual character of $G^F$ defined by Deligne and Lusztig in \cite{DL}. We say a complex irreducible representation is uniform if its character is a linear combination of the Deligne-Lusztig characters. Let $G^*$ be the dual group of $G$. For simplicity, we still denote the Frobenius endomorphism of $G^*$ by $F$. Then there is a natural bijection between the set of $G^F$-conjugacy classes of $(T, \theta)$ and the set of $G^{*F}$-conjugacy classes of $(T^*, s)$ where $T^*$ is a $F$-stable maximal torus in $G^*$ and $s \in   T^{*F}$. If $(T, \theta)$ corresponds to $(T^*, s)$, we denote $R_{T,\theta}^G$ by $R_{T^*,s}^G$.

From now on, we will consider the following three cases: general linear groups, unitary groups, and special orthogonal groups. Let $a\in \overline{\mathbb{F}}_q^*$ and
  \[
  [ a] :=\left\{
  \begin{aligned}
 &\{a^{{(-q)}^k}|k\in\bb{Z}\}&\textrm{ if }G\textrm{ is a unitary group};\\
 &  \{a^{{q}^k}|k\in\bb{Z}\}&\textrm{ otherwise}.
\end{aligned}
\right.
  \]
  For a pair $(T^*,s)$ corresponding to the pair $(T,\theta)$, assume that $s$ has eigenvalues $x_1,\cdots,x_n$. If $[a]\nsubseteq\{x_1,\cdots,x_n\}$, then we say $[a]\notin s$ or $[a]\notin \theta$. If there exists an $[a]$ such that $x_i\in[a]$ for each $i$, then we say $s\sim[a]$ or $\theta\sim[a]$. For abbreviation, we write $\pm 1\notin s$, $s\sim\pm1$ and $\theta\sim\pm1$ instead of $[\pm 1]\notin s$, $s\sim[\pm1]$ and $\theta\sim[\pm1]$, respectively. Here $-1$ is the unique element in $\Fq$ such that $(-1)^2=1$ and $-1\ne 1$.

For a semisimple element $s \in G^{*F}$, define the Lusztig series as follows:
\[
\mathcal{E}(G^F,s) = \{ \pi \in \rm{Irr}(G^F)  :  \langle \pi, R_{T^*,s}^G\rangle \ne 0\textrm{ for some }T^*\textrm{ containing }s \}.
\]
And it is known that $\rm{Irr}(G^F)$ is partitioned into Lusztig series
\[
\rm{Irr}(G^F)=\coprod_{(s)}\mathcal{E}(G^F,s),
\]
where $(s)$ runs over the conjugacy classes of semisimple elements. Moreover, there is a bijection
\[
\mathcal{L}_s:\mathcal{E}(G^F,s)\to \mathcal{E}(C_{G^{*F}}(s),1).
\]
Note that usually, the correspondence $\mathcal{L}_s$ is not uniquely determined. For unitary groups and general linear groups, every representation is uniform. For special orthogonal groups, in \cite[Theorem 1.4]{Wang}, we reduce the Gan-Gross-Prasad problem of arbitrary representations to the representation $\pi\in \mathcal{E}(G^F,s)$ with $\pm1\notin s$. Note that in the special orthogonal groups case, $\pm1\notin s$ implies that $\pi$ is uniform. So we only consider the Lusztig correspondence for the uniform case, and $\mathcal{L}_s$ is uniquely determined by (\ref{l}).

 It is well known that the centralizer $C_{G^*(\overline{\bb{F}}_q)}(s)$ is a product of classical groups. We adopt the notations of \cite[subsection 1.B]{AMR}. Let $T^*(\overline{\bb{F}}_q) \cong \overline{\bb{F}}^\times_q\times\cdots\times\overline{\bb{F}}_q^\times$ be a rational maximal torus of $G^*(\overline{\bb{F}}_q)$, and $s=(x_1,\cdots,x_l)\in T^{*F}$. For $a\in \overline{\mathbb{F}}_q^*$, let $\nu_{a}(s):=\#\{i|x_i=a\}$. The group $C_{G^*(\overline{\bb{F}}_q)}(s)$ has a natural decomposition:
\[
C_{G^*(\overline{\bb{F}}_q)}(s)=\prod_{[ a] \subset \{x_i\}}G^*_{[ a]}(s)(\overline{\bb{F}}_q)
\]
where $G^*_{[ a]}(s)(\overline{\bb{F}}_q)$ is a reductive quasi-simple group of rank equal to $\#[ a]\cdot\nu_{a}(s)$. Now we have the following situations:
\begin{itemize}

\item If $a=\pm1$ and $G=\GGL_n$, $\UU_n$, $\so^\epsilon_{2n}$ or $\sp_{2n}$, then $G_{[ a]}(s)(\overline{\bb{F}}_q)$ is $\GGL_m$, $\UU_m$, $\so^\epsilon_{2m}$ or $\sp_{2m}$, respectively, for some $m\le n$;

\item If $a\ne \pm 1$ and $G=\GGL_n$, then $G_{[ a]}(s)(\overline{\bb{F}}_q)$ is a general linear group;

\item  If $a\ne \pm 1$ and $G\ne\GGL_n$, then $G_{[ a]}(s)(\overline{\bb{F}}_q)$ is either a general linear group or a unitary group.

\end{itemize}

For $[a]\nsubseteq \{x_i\}$, we set $G_{[ a]}(s):=1$. Then we have
\[
C_{G^{*F}}(s)=\prod_{[ a] }G^{*F}_{[ a]}(s).
\]
Hence, we can rewrite the Lusztig correspondence as follows:
\begin{equation}\label{luspi}
\cal{L}_s(\pi)=\prod_{[a]}\pi_{[a]}
\end{equation}
where $\pi_{[a]}$ is a unipotent representation of $G^{*F}_{[ a]}(s)$ for each $[a]$.

\subsection{The main result}

Let $V_n$ be an $n$-dimensional space over $\mathbb{F}_{q}$ with a nondegenerate symmetric (resp. Hermitian) bilinear form $(,)$, which defines the special orthogonal group $\so(V_n)$ (resp. unitary group $\UU_n$), and let $V_{n-m}\subset V_n$. Assume that $n-m$ is odd if $(,)$ is symmetric. Let $\pi$ and $\sigma$ be complex irreducible representations of $\so(V_n)$ and $\so(V_{n-m})$ (resp. $\UU(V_n)$ and $\UU(V_{n-m})$), respectively.
The Gan-Gross-Prasad problem is concerned with the multiplicity
\begin{equation}\label{ggp}
m(\pi, \sigma):=\langle \pi\otimes\bar{\nu},\sigma \rangle_{H^F} = \dim \mathrm{Hom}_{H^F}(\pi\otimes\bar{\nu},\sigma )
\end{equation}
where the data $(H, \nu)$ is defined as (1.2) in \cite{LW1} and (1.2) in \cite{LW2}.
According to the parity of $n-m$, the above Hom space is called the Bessel model or the Fourier-Jacobi model. We call the above two models the basic cases if $n-m\le1$.

Now we focus on the Bessel case. For unitary groups, $\pi$ and $\sigma$ are always uniform. For special orthogonal groups, in \cite[Theorem 1.4]{Wang}, we reduce the finite Gan-Gross-Prasad problem of arbitrary representations to the case $\pi$ and $\sigma$ in the Lusztig series associated with the semisimple elements $s$ and $s'$ such that $\pm1\notin s$ and $\pm1\notin s'$, respectively. So the finite Gan-Gross-Prasad problem is solved if one can solve the above case. Note that in the special orthogonal groups case, $\pm1\notin s$ and $\pm1\notin s'$ imply $\pi$ and $\sigma$ are uniform.
We can calculate the multiplicity by Reeder's formula (\cite{R, LW1}). This formula in \cite{R} is used to calculate $\langle R^G_{T,\chi},R^H_{S,\eta}\rangle_{H^F}$ for some groups $G$ and $H$ with regular characters $\chi$ and $\eta$, and verify some conjectures in \cite{GP1} for depth-zero supercuspidal representations of special orthogonal groups over a local field. In general, explicit calculation with Reeder's formula is still quite involved. The first main theorem of this paper is the following reduction decomposition of Reeder's formula:
\begin{theorem}\label{main1}
(i) Let $G=\UU_{n+1}$ and $H=\UU_n$. Let $T$ and $S$ be $F$-stable maximal torus of $G$ and $H$, respectively, and $\chi$ and $\eta$ be characters of $T^F$ and $S^F$, respectively. Suppose that $(T,\chi)$ and $(S,\eta)$ correspond to $(T^*,t)$ and $(S^*,s)$, respectively. Then
\begin{equation}\label{main11}
\epsilon_{t,s}\langle R^{G}_{T,\chi},R_{S,\eta}^{H}\rangle _{H^F}=\prod_{[a]}\epsilon_{[a]}\langle R^{G'[a]}_{T'[a],\theta[a]\otimes1},R^{H'[a]}_{S'[a],1}\rangle _{H^{\prime }[a]^F}
\end{equation}
where $\epsilon_{t,s}$ and $\epsilon_{[a]}\in\{\pm1\}$ are defined in (\ref{ua}) and (\ref{uts}), respectively, and $G'[a]$, $T'[a]$, $\theta[a]$, $H'[a]$, and $S'[a]$ are defined in the subsection \ref{sec6.2}. Moreover, $G'[a]$ and $H'[a]$ are either general linear groups or unitary groups.

(ii) Let $V$ be an $2n+1$ dimensional space over $\Fq$ with a nondegenerate symmetric bilinear form $(,)$. Let $v\in V$ with $(v,v)\ne 0$, and let $U$ be the orthogonal space of $v$ in $V$. We take $G=\so(V)$ and $H=\so(U)$. Then $G\cong\so_{2n+1}$ and $H\cong\so_{2n}^\epsilon$. Let $T$ and $S$ be $F$-stable maximal torus of $G$ and $H$, respectively and let $\chi$ and $\eta$ be characters of $T^F$ and $S^F$, respectively. Suppose that $(T,\chi)$ and $(S,\eta)$ correspond to $(T^*,t)$ and $(S^*,s)$, respectively, and $\pm1\notin t$ and $\pm1\notin s$. Then
\begin{equation}\label{main12}
\epsilon_{t,s}\langle R^{G}_{T,\chi},R_{S,\eta}^{H}\rangle _{H^F}=\prod_{[a]}\epsilon_{[a]}\langle R^{G'[a]}_{T'[a],\theta[a]\otimes1},R^{H'[a]}_{S'[a],1}\rangle _{H^{\prime }[a]^F}
\end{equation}
where $\epsilon_{t,s}$ and $\epsilon_{[a]}\in\{\pm1\}$ are defined in (\ref{soa}) and (\ref{sots}), respectively, and $G'[a]$, $T'[a]$, $\theta[a]$, $H'[a]$, and $S'[a]$ are defined in subsection \ref{sec7.1}. Moreover, $G'[a]$ and $H'[a]$ are either general linear groups or unitary groups.
\end{theorem}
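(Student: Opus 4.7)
The plan is to begin from Reeder's formula for $\langle R^{G}_{T,\chi},R^{H}_{S,\eta}\rangle_{H^F}$ as established in \cite{R,LW1}, which expresses the pairing as a sum over $H^F$-double cosets in $G^F$ (or equivalently over pairs of Weyl-group elements matching $T$ with conjugates of $S$), each term being a product of $\chi$ and $\eta$ evaluated on the intersection of the conjugated tori. The strategy is to reorganize this sum according to the eigenvalue block decompositions of $t$ and $s$ and show that it factors as a product over $[a]$. To this end I would first introduce, following the setup of subsections \ref{sec6.2} and \ref{sec7.1}, block-level tori $T'[a]\subset G'[a]$ and $S'[a]\subset H'[a]$ with the property that, under the identifications $C_{G^{*F}}(t)=\prod_{[a]}G^{*F}_{[a]}(t)$ and $C_{H^{*F}}(s)=\prod_{[a]}H^{*F}_{[a]}(s)$, the pair $(T,\chi)$ corresponds on each block to $(T'[a],\theta[a])$ after stripping off the common semisimple twist, while $(S,\eta)$ corresponds to $(S'[a],1)$.

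The technical heart is to show that the non-diagonal terms in Reeder's sum cancel. A double-coset representative induces an identification of a subtorus of $T$ with a subtorus of $S$, and for that matching to contribute it must respect the eigenvalue data of $t$ and $s$ block by block: any term that mixes different $[a]$'s produces an inner character sum of the form $\sum_{x}\chi[a](x)\overline{\eta[b](x)}$ with $[a]\ne[b]$, which vanishes because $\chi[a]$ and $\eta[b]$ sit in distinct Lusztig series and are orthogonal when restricted to the common subtorus. The surviving terms are parametrized by collections of matchings, one per $[a]$, and their contributions factor; re-applying Reeder's formula in reverse on each block yields precisely $\langle R^{G'[a]}_{T'[a],\theta[a]\otimes 1},R^{H'[a]}_{S'[a],1}\rangle_{H'[a]^F}$. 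In case (ii) the hypothesis $\pm1\notin t$ and $\pm1\notin s$ enters exactly here to rule out any orthogonal or symplectic block in the centralizer, so every $G'[a]$ and $H'[a]$ is of type $A$, hence general linear or unitary as claimed.

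The last step is to collect the signs. The global $\epsilon_{t,s}$ records the discrepancy between the Deligne-Lusztig signs attached to $G,H$ and the product of the analogous signs attached to the block-level groups, while $\epsilon_{[a]}$ is the local sign at $[a]$; their multiplicativity $\epsilon_{t,s}=\prod_{[a]}\epsilon_{[a]}$ follows from additivity of $\Fq$-rank under the centralizer decomposition, and together with the factorization above this gives both (\ref{main11}) and (\ref{main12}). The step I expect to be the main obstacle is the vanishing argument for non-diagonal terms: Reeder's formula is indexed by Weyl-group data whose interaction with the eigenvalue blocks of $t$ and $s$ is not immediately transparent, and one has to trace carefully how a given double-coset representative partitions the characters $\chi[a]$ and $\eta[a]$ before the orthogonality cancellation becomes visible. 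Once this factorization is in place, identifying the surviving factors with pairings on $G'[a]$ and $H'[a]$ and tallying signs is routine; cases (i) and (ii) differ only in the explicit shape of the block-level groups and in the use of $\pm1\notin t,s$ in (ii) to eliminate the non-$A$-type block.
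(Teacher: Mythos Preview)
Your overall plan—expand Reeder's formula, organize the terms by eigenvalue class $[a]$, and factor—matches the paper's strategy in spirit. But several of the details you describe are not what actually happens, and at least two are substantively wrong.

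First, the block-level data $(G'[a], H'[a], T'[a], S'[a], \theta[a])$ are not what you write. They are \emph{not} obtained by restricting $(T,\chi)$ and $(S,\eta)$ to the $[a]$-blocks of the two centralizers. Rather (subsections~\ref{sec6.2} and~\ref{sec7.1}), the paper takes $H'[a]$ to be the \emph{larger} of $G^*_{[a]}(t)$ and $H^*_{[a]}(s)$, depending on which of $\nu_{[a]}(t)$, $\nu_{[a]}(s)$ is bigger, and $S'[a]$ is the torus inside that larger piece—so $S'[a]$ may come from $T$ rather than from $S$. Then $G'[a]$ is one rank larger, $T'[a]$ contains the torus from the \emph{smaller} block together with an auxiliary factor $T''[a]$, and $\theta[a]$ is a completely arbitrary regular character on that auxiliary factor with $1\notin\theta[a]$; it is not obtained from $\chi$ by stripping a twist. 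This swap is essential: the right-hand pairings are always of the uniform shape ``regular-times-trivial against trivial'', regardless of which side had larger multiplicity at $[a]$, and that is what makes them computable by \cite{LW1}.

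Second, your sign claim $\epsilon_{t,s}=\prod_{[a]}\epsilon_{[a]}$ is false in general. In the paper $\epsilon_{[a]}=(-1)^{\#[a]+1}$ records the rank of the auxiliary torus $T''[a]$, while $\epsilon_{t,s}$ encodes a separate correction (parities of $\nu_{[a]}(t)-\nu_{[a]}(s)$ in case~(i), and $\mathrm{rk}(G)+\mathrm{rk}(H)$ in case~(ii)); the two sit on opposite sides of the identity and do not cancel. The paper tracks signs term by term by comparing $(-1)^{\mathrm{rk}(G_{\mu'})+\mathrm{rk}(H_{\mu'})}$ against the block-level ranks (equations (\ref{evenu})--(\ref{62}) and (\ref{even})--(\ref{odd})).

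Third, the vanishing mechanism you describe—orthogonality of $\chi[a]$ against $\eta[b]$ for $[a]\ne[b]$ on a common subtorus—is not how the argument runs. The paper works instead with the explicit counts $\#M(\mu,\mu',t,t'')$ and $\#M(\mu^S,\mu',s,t'')$ from Section~\ref{sec3}: their product vanishes unless $t''\in D(\mu,\mu',t)\cap D(\mu^S,\mu',s)$, which via Lemmas~\ref{2gl}--\ref{so2} forces the block-compatibility, and the factorization then emerges from a purely combinatorial identity (Lemmas~\ref{61} and~\ref{sox}) matching the resulting sums to the block-level Reeder expansions. Your outline as written would produce the wrong $(G'[a],T'[a],\theta[a])$ and the wrong overall sign.
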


The RHS of (\ref{main11}) and (\ref{main12}) have been calculated in \cite{LW1} and used to solve the finite Gan-Gross-Prasad problem for unipotent representations of unitary groups. In other words, by Theorem \ref{main1}, we can reduce the the multiplicity (\ref{ggp}) to a product of some known multiplicities of unipotent representations for finite unitary groups and finite general linear groups.

Since some multiplicities of certain irreducible representations of general linear groups appear in our main results, we shall explain them. We emphasize that these multiplicities for general linear groups are \emph{not} the multiplicity in the Gan-Gross-Prasad problem. Let $\pi$ and $\sigma$ be two irreducible unipotent representations of $\GGL_n\fq$ and $\GGL_{m}\fq$ with $n\ge m$, respectively. Here we do not require $n-m$ odd. We set
\begin{equation}\label{defgl}
m(\pi, \sigma):=\langle \pi,I^{\GGL_{n+1}}_{P}(\tau\otimes\sigma) \rangle_{\GGL_n(\Fq)}
\end{equation}
where $P$ is an $F$-stable maximal parabolic subgroup with levi factor $\GGL_{n+1-m}\times\GGL_m$ and $\tau$ is a generic representation of $\GGL_{n+1-m}\fq$ (see subsection \ref{sec6.3} for details).

Let $G_n=\UU_n$ or $\GGL_n$. Assume that $n\ge m$. Let $\pi$ and $\sigma$ are irreducible unipotent representations of $G_n^F$ and $G_m^F$, respectively. We know that $m(\pi, \sigma)=m( \sigma,\pi)$ with $n=m$ for unitary groups in \cite{LW3}. According to Lemma \ref{gl}, for general linear groups, this is also true. So we can rewrite the above multiplicity and the multiplicity in the Gan-Gross-Prasad problem as follows
  \[
m(\pi, \sigma):=
\left\{
  \begin{aligned}
 &m(\pi, \sigma)&\textrm{ if }n\ge m;\\
  &m(\sigma,\pi)&\textrm{ if }n< m.
\end{aligned}
\right.
 \]

By Theorem \ref{main1}, we can calculate the multiplicity in the basic cases of the Bessel case of the
Gan-Gross-Prasad problem.
\begin{theorem}\label{main2}
Let $G$ and $H$ be the classical groups in Theorem \ref{main1}. Let $\pi$ and $\sigma$ be irreducible representations of $G^F$ and $H^F$, respectively. Suppose that $\pi\in\cal{E}(G^F,t)$ and $\sigma\in\cal{E}(H^F,s)$. If $G$ and $H$ are special orthogonal groups, we additionally assume that $\pm1\notin t$ and $\pm1\notin s$ (In this case, $\pi$ and $\sigma$ are uniform representations, and $G'[a]$ and $H'[a]$ are all general linear groups and unitary groups).
Then
\[
\left\langle\pi,\sigma\right\rangle_{H^F}  =\prod_{[a]}
 m(\pi[a],\sigma[a])
 \]
 where $\pi[a]$ and $\sigma[a]$ are defined in (\ref{luspi}), and they are both unipotent representations of general linear groups or unitary groups.
\end{theorem}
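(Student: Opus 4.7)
The plan is to combine Theorem \ref{main1} with the uniformity of $\pi$ and $\sigma$ and to identify each factor in the resulting product with the appropriate multiplicity of unipotent representations. Under the hypotheses of the theorem, both $\pi$ and $\sigma$ are uniform: for unitary groups this is automatic, and for special orthogonal groups the assumption $\pm 1 \notin t$, $\pm 1 \notin s$ forces it, as recalled before (\ref{luspi}). Hence one has uniform expansions
\[
\pi = \sum_{T^* \ni t} a_{T^*}\, R^{G}_{T^*,t}, \qquad \sigma = \sum_{S^* \ni s} b_{S^*}\, R^{H}_{S^*,s},
\]
and, under the Lusztig correspondences $\mathcal{L}_t$ and $\mathcal{L}_s$, the unipotent factors $\pi[a]$ and $\sigma[a]$ inherit the corresponding decompositions
\[
\pi[a] = \sum_{T^*[a]} a_{T^*[a]}\, R^{G^{*F}_{[a]}(t)}_{T^*[a],1}, \qquad \sigma[a] = \sum_{S^*[a]} b_{S^*[a]}\, R^{H^{*F}_{[a]}(s)}_{S^*[a],1},
\]
on each factor of the centralizer, up to the sign conventions of $\mathcal{L}_t$, $\mathcal{L}_s$ on uniform characters.

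Expanding $\langle \pi,\sigma\rangle_{H^F}$ bilinearly and applying Theorem \ref{main1} to each Deligne-Lusztig pairing converts every summand into a product over $[a]$ of pairings of Deligne-Lusztig characters on the smaller groups $G'[a]$ and $H'[a]$, with the global sign $\epsilon_{t,s}$ cancelled by $\prod_{[a]}\epsilon_{[a]}$. Reindexing the resulting multi-sum as an iterated product over $[a]$ yields
\[
\langle \pi, \sigma\rangle_{H^F} = \prod_{[a]}\; \sum_{T^*[a],\, S^*[a]} a_{T^*[a]}\, b_{S^*[a]}\, \big\langle R^{G'[a]}_{T'[a], \theta[a]\otimes 1},\, R^{H'[a]}_{S'[a], 1}\big\rangle_{H'[a]^F}.
\]

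It then remains to match each $[a]$-factor with $m(\pi[a],\sigma[a])$. For $[a]=\pm 1$ the twist $\theta[a]$ is trivial and $(G'[a],H'[a])$ form a basic Gan-Gross-Prasad pair of unitary (or special orthogonal) groups; the factor is literally $\langle \pi[a],\sigma[a]\rangle_{H'[a]^F}$, which matches $m(\pi[a],\sigma[a])$ in the Gan-Gross-Prasad sense. For $[a]\neq \pm 1$, $G'[a]$ and $H'[a]$ are general linear (or unitary) groups of possibly unequal rank, and $\theta[a]$ is a character in general position on the complementary general linear Levi factor; since a regular-character Deligne-Lusztig induction on $\GGL$ yields a generic representation, Frobenius reciprocity together with the transitivity of Deligne-Lusztig and parabolic induction identifies the $[a]$-factor with the multiplicity $\langle \pi[a], I^{\GGL}_{P}(\tau\otimes \sigma[a])\rangle$ appearing in (\ref{defgl}), i.e.\ with $m(\pi[a],\sigma[a])$ as defined before the theorem.

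The main obstacle will be the sign bookkeeping: one must verify that the signs $\epsilon_{t,s}$ and $\epsilon_{[a]}$ provided by Theorem \ref{main1}, together with the (not globally canonical) signs implicit in the Lusztig correspondence on uniform characters, combine so that no spurious sign survives in the final product. A second, more technical point is the precise identification of the $\theta[a]$-twisted Deligne-Lusztig pairing with the generic-representation multiplicity (\ref{defgl}): this rests on the standard fact that a regular character on the complementary Levi induces a generic representation of $\GGL$, together with the compatibility of parabolic and Deligne-Lusztig induction with the relevant bilinear pairings, as already exploited in \cite{LW1}.
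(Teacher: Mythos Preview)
Your overall strategy---expand $\pi,\sigma$ as uniform combinations of Deligne--Lusztig characters, apply Theorem~\ref{main1} term by term, interchange the resulting sum with the product over $[a]$, and then identify each factor with $m(\pi[a],\sigma[a])$---is the same as the paper's. However, two of the steps do not go through as you state them.

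First, the signs. It is not true that $\epsilon_{t,s}$ is simply cancelled by $\prod_{[a]}\epsilon_{[a]}$. After applying Theorem~\ref{main1} and (\ref{coff}) one accumulates, in addition to $\epsilon_{t,s}\prod_{[a]}\epsilon_{[a]}$, the signs $\epsilon_G\epsilon_H\prod_{[a]}\epsilon_{G^*_{[a]}(t)}\epsilon_{H^*_{[a]}(s)}$ from the Lusztig correspondence, and yet further signs $\epsilon''_{[a]}$ arising when one recognises $R^{G''[a]}_{T''[a],\theta[a]}$ as $\pm\tau$ for an honest irreducible $\tau$. The paper makes no attempt to cancel these termwise; instead it arrives at $\langle\pi,\sigma\rangle_{H^F}=\pm\prod_{[a]}m(\pi[a],\sigma[a])$ and removes the sign at the very end by observing that both sides are nonnegative integers. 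Your proposal neither supplies this argument nor justifies the claimed cancellation.

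Second, and more seriously, your case analysis for the identification of each $[a]$-factor is incorrect. The relevant dichotomy is not ``$[a]=\pm1$'' versus ``$[a]\neq\pm1$'' (in the orthogonal case $\pm1$ never occurs, and in all cases $G'[a],H'[a]$ are \emph{never} special orthogonal; also $\theta[a]$ is never trivial---it is always chosen regular with $1\notin\theta[a]$). What matters is whether $G'[a]$ is a general linear or a unitary group. In the $\GGL$ case your sketch via (\ref{defgl}) and Lemma~\ref{gl} is correct. In the unitary case one has $G'[a]=\UU_{n[a]+1}$, $H'[a]=\UU_{n[a]}$, while $\pi[a],\sigma[a]$ live on unitary groups $G^*_{[a]}(t),H^*_{[a]}(s)$ whose ranks may differ by any amount. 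When $\nu_{[a]}(t)-\nu_{[a]}(s)$ is odd one is in the Bessel situation and the identification with $m(\pi[a],\sigma[a])$ follows from \cite[Theorem~1.2, Corollary~1.3]{LW3}. When it is even the required multiplicity is of Fourier--Jacobi type, and an additional manoeuvre is needed: by \cite[Corollary~5.3(ii)]{LW3} one may replace $\tau$ by any generic representation, in particular by one of the form $\pm R^{\UU_{n[a]+1-m[a]}}_{\UU_1\times\UU_{n[a]-m[a]}}(\tau'_1\otimes\tau'_2)$ (Proposition~\ref{irr}), peel off the $\UU_1$ factor, and then invoke \cite[Theorems~1.1,~1.2]{LW3} to land on the Fourier--Jacobi multiplicity $m(\pi[a],\sigma[a])$. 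Your ``generic $\GGL$'' argument does not cover this case, and without it the identification of the $[a]$-factor fails whenever $G'[a]$ is unitary with even rank difference.
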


Suppose that $\pi\in\cal{E}(G^F,t)$ and $\sigma\in\cal{E}(H^F,s)$. For $n-m>1$, we can make a reduction to the basic case as follows. Let $l=\frac{n+1-m}{2}$, and let $P$ be an $F$-stable maximal parabolic subgroup of $\rm{SO}(V_{n+1})$ (resp. $\UU(V_{n+1})$) with Levi factor $\GGL_l\times \rm{SO}(V_m)$ (resp. $\left(\rm{Res}_{\bb{F}_{q^2}/\Fq} \GGL_l\right)\times\UU(V_m)$). By \cite[Proposition 5.2]{LW1} and \cite[Proposition 5.2]{LW2}, there exists a cuspidal representation $\tau$ of $\GGL_l\fq$ satisfying some technical conditions such that the following equation holds
\begin{equation}\label{2o}
m(\pi,\sigma)=\langle\pi,I^{\rm{SO}(V_{n+1})}_{P}(\tau\otimes\sigma)\rangle_{\rm{SO}(V_{n})} \ \left(\textrm{resp. $m(\pi,\sigma)=\langle\pi,I^{\rm{U}(V_{n+1})}_{P}(\tau\otimes\sigma)\rangle_{\rm{U}(V_{n})}$}\right)
\end{equation}
where $I^{\rm{SO}(V_n+1)}_{P}(\tau\otimes\sigma)$ (resp. $I^{\rm{U}(V_{n+1})}_{P}(\tau\otimes\sigma)$) is the parabolic induction. Moreover, by our assumption that $q$ is large enough, we can pick a $\tau$ such that the parabolic induction $I^{\rm{SO}(V_{n+1})}_{P}(\tau\otimes\sigma)$ (resp. $I^{\rm{U}(V_{n+1})}_{P}(\tau\otimes\sigma)$) is an irreducible representation and $I^{\rm{SO}(V_{n+1})}_{P}(\tau\otimes\sigma)\in \cal{E}(\rm{SO}(V_{n+1}),(s',s))$ (resp. $I^{\rm{U}(V_{n+1})}_{P}(\tau\otimes\sigma)\in \cal{E}(\rm{U}(V_{n}),(s',s))$ where $s'$ has no common eigenvalues with $s$ and $t$.
So the RHS of (\ref{2o}) can be calculated by Theorem \ref{main2}.

\begin{theorem}\label{main3}
Let $\pi$ be an irreducible representation of $\so(V_n)$ (resp. $\UU(V_n)$) and $\sigma$ be an irreducible representation of $\so(V_m)$ (resp. $\UU(V_m)$). Suppose that $n-m$ is odd. If $G$ and $H$ are special orthogonal groups, we additionally assume that $\pm1$ is not a eigenvalue of $t$ and $s$.
Then
\[
m(\pi,\sigma)  =\prod_{[a]}
 m(\pi[a],\sigma[a])
 \]
 where $\pi[a]$ and $\sigma[a]$ are defined in (\ref{luspi}), and they are both unipotent representations of general linear groups or unitary groups.
\end{theorem}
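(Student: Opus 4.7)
The plan is to reduce the general case $n-m\ge 3$ odd to the basic case $n-m=1$ already handled by Theorem~\ref{main2}, using the parabolic-induction identity \eqref{2o} recorded just before the statement. (The case $n-m=1$ is Theorem~\ref{main2} itself.) Using the large-$q$ hypothesis, one chooses (as explained after \eqref{2o}) a cuspidal representation $\tau$ of the appropriate general-linear factor, with Lusztig parameter $s'$ whose eigenvalues are disjoint from those of $t$ and $s$ and, in the orthogonal case, avoid $\pm 1$, such that the parabolic induction $\Pi := I^{G}_{P}(\tau\otimes\sigma)$ is irreducible and lies in $\cal{E}(G,(s',s))$, with $G = \so(V_{n+1})$ or $\UU(V_{n+1})$. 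Then \eqref{2o} gives $m(\pi,\sigma) = \langle\pi,\Pi\rangle_{H^{F}}$, and by the choice of $s'$ the hypotheses of Theorem~\ref{main2} are satisfied for the pair $(\pi,\Pi)$, so
\[
m(\pi,\sigma) \;=\; \prod_{[a]} m(\pi[a],\Pi[a]),
\]
where $\Pi[a]$ denotes the $[a]$-component of the Lusztig image of $\Pi$.

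The second step is to identify each $\Pi[a]$. Because $s'$ and $s$ have disjoint eigenvalue sets, the centralizer $C_{G^{*}}((s',s))$ coincides with the centralizer in the Levi $L^{*}$, and splits as a direct product over the $s'$-block and the $s$-block; by the compatibility of parabolic induction with the Jordan decomposition of characters, $\cal{L}_{(s',s)}(\Pi)$ is therefore the external tensor product of $\cal{L}_{s'}(\tau)$ with $\cal{L}_{s}(\sigma)$. Because $\tau$ is cuspidal with elliptic parameter $s'$, the dual centralizer of $s'$ is an anisotropic torus, whose only unipotent representation is the trivial character. Indexing by $[a]$, this gives $\Pi[a] = \sigma[a]$ when $[a]$ is an eigenvalue of $s$, and $\Pi[a] = \mathbf{1}$ when $[a]$ is an eigenvalue of $s'$ (or of neither).

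Finally, for each $[a]$ in the $s'$-eigenvalue set the disjointness built into the choice of $s'$ forces $[a]\notin t$, hence $\pi[a] = \mathbf{1}$ and $\sigma[a] = \mathbf{1}$, and consequently $m(\pi[a],\Pi[a]) = 1 = m(\pi[a],\sigma[a])$. For every other $[a]$, $\Pi[a] = \sigma[a]$ directly, and the corresponding factors match as well. Multiplying over $[a]$ yields the desired formula $m(\pi,\sigma) = \prod_{[a]} m(\pi[a],\sigma[a])$. The main obstacle will be making the second step rigorous, carefully computing $\cal{L}_{(s',s)}(\Pi)$ block-by-block and verifying that the components indexed by $s'$-eigenvalues contribute only the trivial unipotent representation; this is where the cuspidality of $\tau$ and the large-$q$ freedom in choosing $s'$ are both essential. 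Once this is established, the theorem follows at once from Theorem~\ref{main2}.
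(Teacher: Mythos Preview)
Your proposal is correct and follows essentially the same route as the paper: the paper's proof of Theorem~\ref{main3} is precisely the reduction paragraph surrounding \eqref{2o}, which passes from $m(\pi,\sigma)$ to $\langle \pi, I_P(\tau\otimes\sigma)\rangle$ with a cuspidal $\tau$ whose parameter $s'$ avoids the eigenvalues of $t$ and $s$, and then invokes Theorem~\ref{main2}. Your write-up fills in the one point the paper leaves implicit---that the extra $[a]$-factors coming from $s'$ contribute $1$---and you have correctly identified this as the only nontrivial verification; note that since $\tau$ is cuspidal the centralizer of $s'$ is an anisotropic torus, so each $G^*_{[a]}((s',s))$ for $[a]$ in $s'$ is of rank one and $\Pi[a]$ is the trivial character of $\GGL_1$ or $\UU_1$, whence $m(\pi[a],\Pi[a])=m(\mathbf{1},\mathbf{1})=1$ by direct computation from the definitions (for $\GGL$ use the specific $\tau$ prescribed in subsection~\ref{sec6.3}, not an arbitrary generic representation).
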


 This paper is organized as follows. Section \ref{sec2} contains some preliminaries. In Section \ref{sec3}, we recall the description of $F$-stable maximal tori. Section \ref{sec4} recalls the Lusztig correspondence, and we decompose them through the decomposition of the centralizer $C_{G^*(\overline{\bb{F}}_q)}(s)$. In Section \ref{sec5}, we recall Reeder's formula in \cite{R,LW1}.We prove Theorem \ref{main1} for unitary groups in Section \ref{sec6} and for special orthogonal groups in Section \ref{sec7}. In Section \ref{sec8}, we prove Theorem \ref{main2}.

{\bf Acknowledgement.} The author would like to thank Dongwen Liu for many helpful and enlightening discussions on this topic.

\section{Preliminaries}\label{sec2}
\subsection{Partitions}
Let $\mu=(\mu_1,\mu_2,\cdots,\mu_k)$ be a partition of $n$, and $|\mu|:=\sum_i\mu_i$. For simplicity of notations, we write $\mu=(a_1^{k_1},\cdots,a_l^{k_l})$ where $a_j$ runs over $\{\mu_i\}$ and $k_j=\#\{i|\mu_i=a_j\}$.

If there exists an $h\in\bb{N}$ such that $h|\mu_i$ for each $i$, then we say $h|\mu$ and set $\frac{\mu}{h}:=(\frac{\mu_1}{h},\cdots,\frac{\mu_k}{h})$. If $b\in\{\mu_i\}$, then we say $b\in \mu$. Let $\lambda=(\lambda_1,\cdots\lambda_n)$ be a partition of $m$ with $m\le n$. We say $\lambda\subset\mu$ if $\{\lambda_i\}\subset\{\mu_i\}$. Here $\{\lambda_i\}\subset\{\mu_i\}$ means the containment of multisets of integers.
Let $(\lambda,\mu)$ be a pair of partitions. We say $(\mu',\lambda')\subset(\mu,\lambda)$ if $\mu'\subset\mu$ and $\lambda'\subset\lambda$.

For two partitions $\mu'\subset\mu$, we write
 \begin{equation}\label{pair}
 \mu=[n_1^{a_1}, \ldots, n_l^{a_l}]\quad \rm{and}\quad \mu'=[n_1^{b_1},\ldots, n_l^{b_l}],
 \end{equation}
where $n_i$'s are distinct so that $0\leq b_i\leq a_i$, $i=1,\ldots, l$. We set
\begin{equation} \label{CC}
C_{\mu,\mu'}=\prod_{i=1}^l
{a_i \choose b_i}.
\end{equation}

For a finite set of partitions $\{\mu^i=(\mu^i_1,\cdots,\mu^i_{k_1})\}$ with $i=1,\cdots l$, let $\mu$ be a partition such that $\{\mu_i\}=\bigcup_{j=1}^l\{\mu_i^j,\cdots,\mu^j_{k_j}\}$. Here $\{\mu_i\}=\bigcup_{j=1}^l\{\mu_i^j,\cdots,\mu^j_{k_j}\}$ means the equivalence of multisets of integers. We say $\mu=\bigcup_i\mu^i$ or $\mu=\mu^1\bigcup\mu^2\bigcup\cdots\bigcup\mu^l$.

\subsection{Centralizer of a semisimple element }\label{sec2.2}

 Let $G$ be a classical group defined over a finite field $\Fq$ and $F$ be a Frobenius morphism of $G$. Let $s$ be a semisimple element in the connected component of $G$, and $C_{G(\overline{\bb{F}}_q)}(s)$ be the centralizer in $G(\overline{\bb{F}}_q)$ of a semisimple element $s \in G^0(\overline{\bb{F}}_q)$. In \cite[subsection 1.B]{AMR}, A.-M. Aubert, J. Michel and R. Rouquier described $C_{G(\overline{\bb{F}}_q)}(s)$ as follows. Let $T(\overline{\bb{F}}_q) \cong \overline{\bb{F}}^\times_q\times\cdots\times\overline{\bb{F}}_q^\times$ be a rational maximal torus of $G(\overline{\bb{F}}_q)$, and $s=(x_1,\cdots,x_l)\in T^F$. For $a\in \overline{\bb{F}}_q^*$, let $\nu_{a}(s):=\#\{i|x_i=a\}$ and
  \[
  [ a] :=\left\{
  \begin{aligned}
 &\{a^{{(-q)}^k}|k\in\bb{Z}\}&\textrm{ if }G\textrm{ is a unitary group};\\
 &  \{a^{{q}^k}|k\in\bb{Z}\}&\textrm{ otherwise}.
\end{aligned}
\right.
  \]
  Clearly, if $a'\in[a]$ and $a\in\{x_i\}$, then $a'\in\{x_i\}$ and $\nu_{a'}(s)=\nu_{a}(s)$.
 The group $C_{G(\overline{\bb{F}}_q)}(s)$ has a natural decomposition with the eigenvalues of $s$:
\[
C_{G(\overline{\bb{F}}_q)}(s)=\prod_{[ a] \subset \{x_i\}}G_{[ a]}(s)(\overline{\bb{F}}_q)
\]
where $G_{[ a]}(s)(\overline{\bb{F}}_q)$ is a reductive quasi-simple group of rank equal to $\#[ a]\cdot\nu_{a}(s)$. Now we have the
following situations:
\begin{itemize}

\item If $a=\pm1$ and $G=\GGL_n$, $\UU_n$, $\so^\epsilon_{2n}$ or $\sp_{2n}$, then $G_{[ a]}(s)(\overline{\bb{F}}_q)$ is $\GGL_m$, $\UU_m$, $\so^\epsilon_{2m}$ or $\sp_{2m}$, respectively, for some $m\le n$.

\item If $a\ne \pm 1$ and $G=\GGL_n$, then $G_{[ a]}(s)(\overline{\bb{F}}_q)$ is a general linear group.

\item  If $a\ne \pm 1$ and $G\ne\GGL_n$, then $G_{[ a]}(s)(\overline{\bb{F}}_q)$ is a general linear group or unitary group.

\end{itemize}

For $[a]\nsubseteq \{x_i\}$, we set $G_{[ a]}(s):=1$. Then we rewrite
\[
C_{G^F}(s)=\prod_{[ a] }G^F_{[ a]}(s).
\]

\section{Remarks on maximal tori} \label{sec3}
Let $G$ be a connected reductive algebraic group over $\Fq$, and $F$ be a Frobenius morphism of $G$. Fix an $F$-stable maximal torus $T_0$ in $G$ which is contained in an $F$-stable Borel subgroup of $G$, and let $W=W_G(T_0)=N_G(T)/T$ be the Weyl group of $G$.

For any $F$-stable maximal torus $T$, there is $g\in G$ such that $^gT=T_0$. Since $T$ is $F$-stable, we have $gF(g^{-1})\in N_G(T_0)$. If $w$ is the image of  $gF(g^{-1})$ in $W$, then we denote $T$ by $T_w$. Hence there is a bijection between the set of $G^F$-conjugacy classes of $F$-stable maximal tori of $G$ and the set of $F$-conjugacy classes of $W$ (\cite[Proposition 3.3.3]{C}). It is known that $T_0^{wF}:= \{t \in T_0 : F (t) = {}^{w^{-1}} t\}$ is isomorphic to $T_w^F$ (\cite[the proof of Proposition 3.3.6]{C}), and $W_{G}(T_w)^F\cong C_{W,F}(w)$ where $C_{W,F}(w):=\{w'\in W|w^{\prime -1}wF(w')=w\}$.

\subsection{The general linear groups}\label{sec3.1}
We first consider $G = \GGL_n$. In this case, $W \cong  \BB{S}_n$, where the symmetric group $\BB{S}_n$ acts on the set $\{1,\cdots, n\}$. So the conjugacy classes of $W$ are parameterized by partitions $\mu = (\mu_1, \cdots , \mu_k)$ of $n$. Since the action of the Frobenius morphism $F$ on the Weyl group $W$ is trivial, the $F$-conjugacy classes of $W$ are also parameterized by partitions. We choose a canonical representative $w_{\mu}$ of the conjugacy class corresponding to $\mu$ such that
\[
w_\mu= (1,2,\cdots,\mu_1)(\mu_1+1,\cdots,\mu_1+\mu_2)\cdots(\mu_1+\cdots+\mu_{k-1}+1,\cdots,\mu_1+\cdots+\mu_{k}).
\]

The torus $T_0$ of $\GGL_n$ can be chosen to be the group of diagonal matrices. So for each $t_0 \in T_0$, we write $t_0 = (x_1, \cdots , x_n)$, where $x_1,\cdots, x_n \in \overline{\bb{F}}_q^{\times}$. The Weyl group $W$ acts on $T_0$ by permuting $(x_1,\cdots, x_n)$ in a similar way, that is,
\[
w_\mu\cdot t_0 = (x_{\mu_1},x_1,x_2,x_3,\cdots, x_{\mu_1-1},  x_{\mu_1+\mu_2}, x_{\mu_1+1},x_{\mu_1+2},x_{\mu_1+3},\cdots, x_{\mu_1+\mu_2-1}, x_{\mu_1+\mu_2+\mu_3}\cdots).
\]
For a partition $\mu=(\mu_1,\cdots,\mu_k)=(n_1^{k_1},n_2^{k_2},\cdots,n_r^{k_r})$, we write $w_\mu$ as a product of cycles:
\[
w_\mu=\prod_{i=1}^{r}\prod_{\ell=1}^{k_i}c_{n_i,\ell}.
\]
Let $\psi:\BB{S}_{k_1}\times\cdots\times \BB{S}_{k_r} \to \BB{S}_n$ such that for any $\tau=\tau_{1}\times\cdots\times\tau_{r}\in \BB{S}_{k_1}\times\cdots\times \BB{S}_{k_r} $, the permutation $\psi(\tau)$ sends $c_{n_i,\ell}$ to $ c_{n_i,\tau_i(\ell)}$ for each index $1\le i \le r$ and $1\le\ell\le k_i$. It is well known that
 \begin{equation}\label{cw}
 C_{W,F}(w_\mu)=\left\langle\psi(\BB{S}_{k_1}\times\cdots\times \BB{S}_{k_r}),\prod_{i=1}^{r}\prod_{\ell=1}^{k_i}\langle c_{i,\ell}\rangle\right\rangle
 \end{equation}
 and
 \[
 \# C_{W,F}(w_\mu)=\prod_{i=1}^rn_i^{k_i}k_i!.
 \]

For abbreviation, we write $T_{\mu}$ instead of $T_{w_\mu}$. Let $t\in T_\mu^F$ be a semisimple element. There is a natural embedding:
  \[
\begin{matrix}
T_\mu= \prod_{[a]}T_{\mu[t,[a]]}\hookrightarrow  C_{G^F}(t)= \prod_{[a]}G_{[a]}(t).

\end{matrix}
\]
where $T_{\mu[t,[a]]}=T_\mu\bigcap G_{[a]}(t)$ is an $F$-stable maximal torus of $G_{[a]}(t)$. On the other hand, there is a natural decomposition of the torus $T_\mu$ as follows:
 \[
\begin{matrix}
T_\mu= T_{\mu_1}\times\cdots\times T_{\mu_k}

\end{matrix}
\]
where $T_{\mu_i}^F\cong \GGL_{1}(\bb{F}_{q^{\mu_i}})$. Clearly, for each $\mu_i$, there is a unique index $[a]$ such that $T_{\mu_i}\subseteq T_{\mu[t,[a]]}$, and
 \[
 T_{\mu[t,[a]]}=\prod_{i}T_{\mu_i}
 \]
where the product runs over the integers $i\in\{1,\cdots,k\}$ such that $T_{\mu_i}\subseteq T_{\mu[t,[a]]}$. In other words, we get a map $f_t$ from the set $\{\mu_i\}$ to the set $\{[a]\}$, and then we can regard the index $\mu[t,[a]]$ as a partition $\mu[t,[a]]=(\mu_{j})\subset \mu$ where $\mu_j\in\{\mu_i\} $ with $f_t(\mu_j)=[a]$. If $[a]$ is not in the image of $f_t$, then we set $\mu[t,[a]]=0$. Hence, we have $\#[a]\big|\mu[t,[a]]$. For abbreviation, we write $\mu[t,a]$ instead of $\mu[t,[a]]$. It is easy to check that $\mu[t,a]=\mu[t',a]$ for any $[a]$ if and only if $t$ and $t'$ are in the same $W_G(T_\mu)^F$-conjugacy class.

 Let $\mu'$ and $\mu''$ be two partitions such that $|\mu'|+|\mu''|=n$ and $\mu=\mu'\bigcup\mu''$. We fix an embedding
 \begin{equation}\label{e1gl}
 T_{\mu}=T_{\mu'}\times T_{\mu''}\hookrightarrow \GGL_{|\mu'|}\times \GGL_{|\mu''|}\hookrightarrow \GGL_n.
 \end{equation}
 It is easy to check that
\[
w_\mu=\prod_{i=1}^{r}\prod_{\ell=1}^{k_i}\langle c_{i,\ell}\rangle\subset \GGL_{|\mu'|}\times \GGL_{|\mu''|},
\]
 which implies that
  \begin{equation}\label{cw2}
 C_{W,F}(w_\mu)=\left\langle C_{W,F}(w_\mu)\bigcap \GGL_{|\mu'|},C_{W,F}(w_\mu)\bigcap \GGL_{|\mu''|},S \right\rangle
 \end{equation}
 where $S$ is a certain subgroup of $\psi(\BB{S}_{k_1}\times\cdots\times \BB{S}_{k_r})$ such that $S \bigcap \GGL_{|\mu'|}=S \bigcap \GGL_{|\mu''|}=1$.
Note that $W_{\GGL_n}(T_{\mu})^F= C_{W,F}(w_\mu)$, $W_{\GGL_{|\mu'|}}(T_{\mu'})^F=C_{W,F}(w_\mu)\bigcap \GGL_{|\mu'|}\fq$, and $W_{\GGL_{|\mu''|}}(T_{\mu''})^F=C_{W,F}(w_\mu)\bigcap \GGL_{|\mu''|}\fq$. We get
 \begin{equation}
 |W_{\GGL_n}(T_{\mu})^F|=| C_{W,F}(w_\mu)|=C_{\mu,\mu'}\cdot|W_{\GGL_{|\mu'|}}(T_{\mu'})^F|\cdot|W_{\GGL_{|\mu''|}}(T_{\mu''})^F|.
 \end{equation}

From now on, we fix a $t\in T_{\mu}^F$. We now turn to consider the set
\[
D(\mu,\mu',t):=\{t'\in T_{\mu'}^F\textrm{ such that }t'=^{w'}t|_{T_{\mu'}}\textrm{ for some } w'\in W_G(T_{\mu})^F\}.
 \]
If $t'\in D(\mu,\mu',t)$, then it is easy to check that
\begin{itemize}

\item  ${}^{w'}t'\in D(\mu,\mu',t)$ for each $w'\in W_{\GGL_{|\mu'|}}(T_{\mu'})^F$;

\item  $\mu'[t',a]\subset \mu[t,a]$ for each $[a]$;

\item $\mu'[t',a]=\mu'[t'',a]$ for any $[a]$ if and only if $t'$ and $t''$ are in the same $W_{\GGL_{|\mu'|}}(T_{\mu'})^F$-conjugacy class.
\end{itemize}
Hence, we get an injection from the set of $W_{\GGL_{|\mu'|}}(T_{\mu'})^F$-conjugacy classes in $D(\mu,\mu',t)$ to the set
\begin{equation}\label{glp}
P(\mu,\mu',t):=\left\{f:A\to P\textrm{ such that }\#[a]\big|f([a]),\ \bigcup_{[a]}f([a])=\mu',\textrm{ and }f([a])\subset \mu[t,a]\right\}
\end{equation}
where $A$ is the set of indexes $[a]$, and $P$ is the set of partitions of any integers. To show this is actually a bijection, we shall associate every map $f \in P(\mu,\mu',t)$ to a semisimple $t'\in D(\mu,\mu',t)$ as follows. We have a decomposition of $T_{\mu'}$:
\[
T_{\mu'}=\prod_{[a]}T_{f([a])}
\]
where $T_{f([a])}$ is an $F$-stable maximal torus of $\GGL_{|f([a])|}$. Let $T_{0,f([a])}$ be the $F$-stable maximal torus of $\GGL_{|f([a])|}$ consisting of diagonal matrices. Put
\[
t_{0,f([a])}=\rm{diag}\left(a,a^q,a^{q^2}\cdots,a^{q^{|f([a])|-1}}\right)\in T_{0,f([a])}.
\]
Let $w_{f([a])}$ be the element of the Weyl group of $\GGL_{|f([a])|}$ corresponding to the partition $f[a]$. Since $\#[a]\big|f([a])$ and $a^{q^{\#[a]}}=a$, we have $F(w_{f([a])}\cdot t_{0,f([a])})=t_{0,f([a])}$, or equivalently $t_{0,f([a])}\in T_{0,f([a])}^{w_{f([a])}F}$. Let $g\in \GGL_{|f([a])|}$ such that the image of $gF(g^{-1})$ in the Weyl group is $w_{f([a])}$. Then $t_{f([a])}:={}^{g^{-1}}t_{0,f([a])}\in T_{f([a])}^F$. Put
\[
t':=\prod_{[a]}t_{f([a])}.
\]
 Clearly, $t'\in D(\mu,\mu',t)$ and $\mu[t',a]=f([a])$ for each $[a]$.
\begin{lemma}\label{2gl}
Keep the notations and assumptions as above, there is a bijection from the set $P(\mu,\mu',t)$ to the $W_{\GGL_{|\mu'|}}(T_{\mu'})^F$-conjugacy classes in $D(\mu,\mu',t)$.
\end{lemma}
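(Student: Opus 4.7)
The plan is to prove the lemma in two directions: first establish that the assignment $t'\mapsto (\mu'[t',a])_{[a]}$ descends to an injective map from $W_{\GGL_{|\mu'|}}(T_{\mu'})^F$-conjugacy classes in $D(\mu,\mu',t)$ into $P(\mu,\mu',t)$, and second exhibit surjectivity by showing that the explicit $t'$ constructed just before the lemma statement indeed lies in $D(\mu,\mu',t)$.

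For the injection, the three bulleted observations preceding the lemma already do most of the work. The containment $\mu'[t',a]\subset\mu[t,a]$ together with $\bigcup_{[a]}\mu'[t',a]=\mu'$ and $\#[a]\big|\mu'[t',a]$ shows that the assignment lands in $P(\mu,\mu',t)$. The well-definedness on $W_{\GGL_{|\mu'|}}(T_{\mu'})^F$-orbits and the injectivity are the third bullet, applied in the group $\GGL_{|\mu'|}$. So I would simply reference these bullets.

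The main work is surjectivity. Given $f\in P(\mu,\mu',t)$, the construction before the lemma produces $t'=\prod_{[a]}t_{f([a])}\in T_{\mu'}^F$ whose invariants satisfy $\mu'[t',a]=f([a])$. To see $t'\in D(\mu,\mu',t)$, I would proceed as follows. Decompose $t=\prod_i t_i$ with $t_i\in T_{\mu_i}^F\cong\bb{F}_{q^{\mu_i}}^\times$, where $t_i$ has eigenvalue in $[f_t(\mu_i)]$. Under the cycle generator $c_{n_i,\ell}\in C_{W,F}(w_\mu)$, the induced action on $T_{\mu_i}^F$ is Frobenius $x\mapsto x^q$, so iterating cycles within $\prod_{i,\ell}\langle c_{n_i,\ell}\rangle\subset C_{W,F}(w_\mu)$ realizes every Galois conjugate of $t_i$ inside $[f_t(\mu_i)]\cap\bb{F}_{q^{\mu_i}}^\times$. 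Using the factor $\psi(\BB{S}_{k_1}\times\cdots\times\BB{S}_{k_r})\subset C_{W,F}(w_\mu)$ from (\ref{cw}), I can additionally permute any two cycles of the same length.

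Now I choose, for each $[a]$ and each part $n\in f([a])$, one $\mu_i$ in $\mu[t,a]$ with $\mu_i=n$ (possible by $f([a])\subset\mu[t,a]$ and a greedy choice exhausting $f([a])$). Using the permutation factor I move these chosen cycles, via the embedding (\ref{e1gl}), to the $\GGL_{|\mu'|}$-side so that the remaining cycles populate $T_{\mu''}$; and using the cyclic factors I adjust the eigenvalue on each chosen $T_{\mu_i}$ to any prescribed element of $[a]\cap\bb{F}_{q^{\mu_i}}^\times$, in particular to the eigenvalue carried by the matching component of $t'$. This produces $w'\in C_{W,F}(w_\mu)=W_G(T_\mu)^F$ with $({}^{w'}t)|_{T_{\mu'}}$ equal to $t'$ up to $W_{\GGL_{|\mu'|}}(T_{\mu'})^F$-conjugation, which is enough since $D(\mu,\mu',t)$ is closed under that action by the first bullet. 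The main obstacle is bookkeeping: I must verify that the permutation plus the per-cycle Frobenius twists assemble into a single element of $C_{W,F}(w_\mu)$ (rather than merely of $W$), which follows from (\ref{cw}) since both pieces live inside that centralizer.
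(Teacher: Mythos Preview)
Your proposal is correct and follows the same approach as the paper: the injection is exactly the content of the three bullets preceding the lemma, and surjectivity is obtained by verifying that the explicitly constructed $t'=\prod_{[a]}t_{f([a])}$ lies in $D(\mu,\mu',t)$. The paper's own argument is terser---it simply asserts ``Clearly, $t'\in D(\mu,\mu',t)$''---whereas you have unpacked this into the concrete manipulation using the permutation factor $\psi(\BB{S}_{k_1}\times\cdots\times\BB{S}_{k_r})$ and the per-cycle Frobenius twists from $\prod_{i,\ell}\langle c_{n_i,\ell}\rangle$ inside $C_{W,F}(w_\mu)$, which is precisely the intended justification.
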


Set
\[
M(\mu,\mu',t,t'):=\{w'\in W_{\GGL_{n}}(T_{\mu})^F: t'=^{w'}t|_{T_{\mu'}} \}.
\]
Recall that the action of $W_{\GGL_{n}}(T_{\mu'})^F$ on $T_{\mu'}$ is equal to the action of $C_{W,F}(w_\mu)$ on $T_0^{w_{\mu}F}$. By our discussion of the action of $C_{W,F}(w_\mu)$ in (\ref{cw}) and (\ref{cw2}), we have
\begin{equation}\label{2glm}
\#M(\mu,\mu',t,t')=\left\{
  \begin{aligned}
 &|W_{\GGL_{|\mu'|}}(T_{\mu'},t')^F|\cdot|W_{\GGL_{|\mu''|}}(T_{\mu''})^F|\cdot\prod_{[a]}C_{\mu[t,a],\mu'[t',a]}&\textrm{ if }t'\in D(\mu,\mu',t) ;\\
 &0&\textrm{ otherwise}
\end{aligned}
\right.
\end{equation}
where $W_{G}(T,t):=\{w\in W_{G}(T)|{}^wt=t\}$.

\subsection{The unitary groups}\label{sec3.2}
In this subsection, we assume that $G=\UU_n$. We may choose the hermitian form that defines the unitary group to be represented by the identity matrix so that
\[
\UU_n\fq =\{g \in \GGL_n(\overline{\bb{F}}_q) | g \cdot {}^t\bar{g}=I_n\},
\]
where $\bar{g}$ is obtained from raising matrix entries in $g$ to the $q$-th power.
Then the action of the Frobenius endomorphism $F$ is given by
$
g\mapsto {}^t\bar{g}^{-1}.
$

Take an $F$-stable maximal torus $T_0$ of $G$ that is contained in an $F$-stable Borel subgroup. As before, we have $W=W_G(T_0)\cong \BB{S}_n$. It is easy to check that with this identification, the action of $F$ on $W$ is given by
\[
F(w)=(w_0ww_0)
\]
where $w_0$ is the longest element in $\BB{S}_n$.
Note that $w$ and $w'$ are $F$-conjugate if and only if $ww_0$ and $w'w_0$ are conjugate. So the $F$-conjugacy classes in $W$ are parametrized by partitions of $n$. We write $T_{\mu}$ instead of $T_{w}$ if $w$ is in the $F$-conjugacy class corresponding to $\mu$ or equivalently $ww_0$ is in the $W$-conjugacy class corresponding to $\mu$.

Let $\mu'$ and $\mu''$ be two partitions such that $|\mu'|+|\mu''|=n$. Suppose that $\mu=\mu'\bigcup \mu''$. Then we have
 \begin{equation}\label{uw}
 |W_{\UU_n}(T_{\mu})^F|=C_{\mu,\mu'}\cdot|W_{\UU_{|\mu'|}}(T_{\mu'})^F|\cdot|W_{\UU_{|\mu''|}}(T_{\mu''})^F|.
 \end{equation}
Similar to subsection \ref{sec3.1}, we define $\mu[t,a]$, $D(\mu,\mu',t)$ and $P(\mu,\mu',t)$.
\begin{lemma}\label{2u}
Keep the notations and assumptions as above, there is a bijection from $P(\mu,\mu',t)$ to the $W_{\UU_{|\mu'|}}(T_{\mu'})^F$-conjugacy classes in $D(\mu,\mu',t)$.
\end{lemma}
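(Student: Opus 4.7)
The plan is to parallel the proof of Lemma \ref{2gl} verbatim, adapting only for the two structural differences between the unitary and general linear settings: the Frobenius on the Weyl group is twisted by the longest element $w_0$, and the orbits $[a]$ are now orbits under $x\mapsto x^{-q}$ rather than $x\mapsto x^q$. First I would build, for each $f\in P(\mu,\mu',t)$, a canonical representative $t'\in D(\mu,\mu',t)$ with $\mu'[t',a]=f([a])$ for every $[a]$; then I would argue that the assignment $f\mapsto [t']$ is a bijection onto $W_{\UU_{|\mu'|}}(T_{\mu'})^F$-conjugacy classes.

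For the construction, the decomposition $T_{\mu'}=\prod_{[a]}T_{f([a])}$ still holds in the unitary case, with each $T_{f([a])}$ the $F$-stable maximal torus of $\UU_{|f([a])|}$ attached to the partition $f([a])$. Inside the diagonal torus $T_{0,f([a])}$ of $\GGL_{|f([a])|}$ I would set
\[
t_{0,f([a])}=\mathrm{diag}\!\left(a,\;a^{-q},\;a^{(-q)^2},\;\ldots,\;a^{(-q)^{|f([a])|-1}}\right),
\]
using $-q$ rather than $q$ because the relevant orbit is $[a]=\{a^{(-q)^k}\mid k\in\bb{Z}\}$. The divisibility $\#[a]\,\big|\,|f([a])|$ together with $a^{(-q)^{\#[a]}}=a$ guarantees that $t_{0,f([a])}$ lies in $T_{0,f([a])}^{w_{f([a])}F}$, so transport by an element $g$ whose image in the Weyl group is $w_{f([a])}$ produces $t_{f([a])}\in T_{f([a])}^F$, and the product $t':=\prod_{[a]}t_{f([a])}$ is the desired representative. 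By construction $\mu'[t',a]=f([a])$, and $t'\in D(\mu,\mu',t)$ because $t'$ and $t|_{T_{\mu'}}$ share the same eigenvalue multiset up to a permutation realized inside $W_{\UU_n}(T_\mu)^F$.

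For the bijection itself, injectivity of $f\mapsto [t']$ is the statement, already used in the $\GGL_n$ case, that the tuple $(\mu'[t',a])_{[a]}$ is a complete invariant of the $W_{\UU_{|\mu'|}}(T_{\mu'})^F$-orbit of $t'$. For surjectivity, an arbitrary $t''\in D(\mu,\mu',t)$ yields a candidate $f([a]):=\mu'[t'',a]$; the conditions $\#[a]\,\big|\,f([a])$, $\bigcup_{[a]}f([a])=\mu'$, and $f([a])\subset\mu[t,a]$ are automatic, so $f\in P(\mu,\mu',t)$, and the representative $t'$ associated to this $f$ is $W_{\UU_{|\mu'|}}(T_{\mu'})^F$-conjugate to $t''$ by the same eigenvalue-matching argument.

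The one place where the unitary setting genuinely demands extra care is the analog of (\ref{cw2}): one must verify that $C_{W,F}(w_\mu)$ decomposes with respect to the embedding $T_{\mu'}\times T_{\mu''}\hookrightarrow T_\mu$ in the same block-by-block fashion as in the linear case, despite the twisted Frobenius $F(w)=w_0ww_0$. This is the main technical obstacle, and I would handle it by using the standard trick $w\mapsto ww_0$ to convert $F$-conjugacy classes in $W$ into ordinary conjugacy classes, after which the combinatorial description (\ref{cw}) of the centralizer, the counting identity $|W_{\UU_n}(T_\mu)^F|=C_{\mu,\mu'}\cdot|W_{\UU_{|\mu'|}}(T_{\mu'})^F|\cdot|W_{\UU_{|\mu''|}}(T_{\mu''})^F|$ already recorded as (\ref{uw}), and the cycle-permutation analysis from subsection \ref{sec3.1} transfer directly. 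Once this decomposition is in hand, the remainder of the argument is purely formal and follows the $\GGL_n$ template step by step.
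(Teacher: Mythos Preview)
Your proposal is correct and follows essentially the same approach as the paper: both reduce to Lemma~\ref{2gl} and identify the single necessary modification as replacing the diagonal element $t_{0,f([a])}$ by $\mathrm{diag}\!\left(a,\,a^{-q},\,a^{(-q)^2},\ldots,a^{(-q)^{|f([a])|-1}}\right)$, reflecting the $(-q)$-power definition of $[a]$ in the unitary case. Your discussion is in fact more detailed than the paper's (which simply asserts the analogy and records this one change), and your remark that the analog of (\ref{cw2}) needs to be checked via the $w\mapsto ww_0$ trick is a point the paper leaves implicit.
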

\begin{proof}
The proof is similar to Lemma \ref{2gl}. The only difference between the unitary groups case and the general linear groups case is the construction of $t_{0,f([a])}$. In the unitary groups case, we take $t_{0,f([a])}=\rm{diag}\left(a,a^{-q},a^{{-q}^2}\cdots,a^{{-q}^{|f([a])|-1}}\right)$ instead of $\rm{diag}\left(a,a^{q},a^{q^2}\cdots,a^{q^{|f([a])|-1}}\right)$, and that is why the definitions of $[a]$ differ between these two cases.
\end{proof}
As before, we set
\[
M(\mu,\mu',t,t'):=\{w'\in W_{\UU_{n}}(T_{\mu})^F : t'=^{w'}t|_{T_{\mu'}} \}.
\]
Then
\begin{equation}\label{um}
\#M(\mu,\mu',t,t')=\left\{
  \begin{aligned}
 &|W_{\UU_{|\mu'|}}(T_{\mu'},t')^F|\cdot|W_{\UU_{|\mu''|}}(T_{\mu''})^F|\cdot\prod_{[a]}C_{\mu[t,a],\mu'[t',a]}&\textrm{ if }t'\in D(\mu,\mu',t) ;\\
 &0&\textrm{ otherwise}
\end{aligned}
\right.
\end{equation}

\subsection{The symplectic and odd special orthogonal groups}\label{sec3.3}
Let G be an odd special orthogonal group $G=\so_{2n+1}$  or a symplectic group $\sp_{2n}$. The Weyl group $W$ of $G$ is isomorphic to the semidirect product of $\BB{S}_n$ with an abelian normal subgroup of order $2^n$. Clearly, $F$ acts trivially on $W$. Then the conjugacy classes of $W$ are equal to the $F$-conjugacy classes of $W$. It is well known that the conjugacy classes of $W$ are parameterized by bipartitions $(\mu,\lambda)$, where $|\mu|+|\lambda| = n$. We allow $\mu = 0$ or $\lambda = 0$. For abbreviation, we write $T_{\mu,\lambda}$ instead of $T_{w_{\mu,\lambda}}$.

The canonical form of an element $w_{\mu,\lambda} \in W$ can be described as follows. Suppose that $\mu=(\mu_1,\cdots,\mu_k)$, $\lambda=(\lambda_{1},\cdots,\lambda_{l})$. Put
\[
N_{\mu_i}:= \mu_1 + \cdots+ \mu_{i}\textrm{ and }N_{\lambda_j}:= |\mu|+\lambda_1 + \cdots+ \lambda_{j}.
 \]
 Let $T_0$ be the group of diagonal matrices.
Let $t = (x_1, \cdots , x_n) \in T_0$ and $w_{\mu,\lambda} \cdot t =t'= (x'_1, \cdots , x'_n)\in T_0$.
Then
\begin{equation}\label{act1}
(x'_{N_{\mu_i}+1},x'_{N_{\mu_i}+2},\cdots,x'_{N_{\mu_{i+1}}})=(x_{N_{\mu_{i+1}}},x_{N_{\mu_i}+1},x_{N_{\mu_i}+2},\cdots,x_{N_{\mu_{i+1}}-1})
\end{equation}
and
\begin{equation}\label{act2}
(x'_{N_{\lambda_j}+1},x'_{N_{\lambda_j}+2},\cdots,x'_{N_{\lambda_{j+1}}})=((x_{N_{\lambda_{j+1}}})^{-1},x_{N_{\lambda_j}+1},x_{N_{\lambda_j}+2},\cdots,x_{N_{\lambda_{j+1}}-1}).
\end{equation}

 From now on, consider the symplectic group $G=\sp_{2n}$.
Let $t\in T_{\mu,\lambda}^F$ be a semisimple element. There is a natural embedding:
  \[
\begin{matrix}
T_{\mu,\lambda}= \prod_{[a]}T_{\mu[t,[a]],\lambda[t,[a]]}\hookrightarrow  C_{G^F}(t)= \prod_{[a]}G_{[a]}(t).
\end{matrix}
\]
where $T_{\mu[t,[a]],\lambda[t,[a]]}=T_{\mu,\lambda}\bigcap G_{[a]}(t)$ is an $F$-stable maximal torus of $G_{[a]}(t)$. On the other hand, there is a natural decomposition of the torus $T_{\mu,\lambda}$ as follows:
 \[
\begin{matrix}
T_{\mu,\lambda}= T_{\mu_1}\times\cdots\times T_{\mu_k}\times T_{\lambda_1}\times\cdots\times T_{\lambda_l}
\end{matrix}
\]
where $T_{\mu_i}^F\cong \GGL_{1}(\bb{F}_{q^{\mu_i}})$ and $T_{\lambda_i}^F\cong \UU_{1}(\bb{F}_{q^{\lambda_i}})$. Clearly, for each $\mu_i$ (resp. $\lambda_j$), there is a unique index $[a]$ such that $T_{\mu_i}\subseteq G_{[a]}(t)$ (resp. $T_{\lambda_j}\subseteq G_{[a]}(t)$), and
 \[
 T_{\mu[t,[a]],\lambda[t,[a]]}=  T_{\mu[t,[a]]}\times  T_{\lambda[t,[a]]}=\prod_{i}T_{\mu_i}\times \prod_{j}T_{\lambda_j}
 \]
where the product of $i$ (resp. $j$) runs over the integers $i\in\{1,\cdots,k\}$ (resp. $j\in\{1,\cdots,l\}$) such that $T_{\mu_i}\subseteq G_{[a]}(t)$ (resp. $T_{\lambda_j}\subseteq G_{[a]}(t)$). In other words, we get a map $f_t$ from the set $\{\mu_i\}\cup \{\lambda_j\}$ to the set $\{[a]\}$, and then we can regard the index $(\mu[t,[a]],\lambda[t,[a]])$ as a bipartition $(\mu[t,[a]],\lambda[t,[a]])=((\mu_{i'}),(\lambda_{j'}))\subset (\mu,\lambda)$ where $\mu_{i'}\in\{\mu_i\} $ with $f_t(\mu_{i'})=[a]$ and $\lambda_{j'}\in\{\lambda_j\} $ with $f_t(\lambda_{j'})=[a]$. If $[a]$ is not in the image of $f_t$, then we set $(\mu[t,[a]],\lambda[t,[a]])=0$. Hence, we have
\[
\#[a]\big|\mu[t,[a]]\ \textrm{ and }\ \#[a]\big|2\lambda[t,[a]].
 \]
 For abbreviation, we write $\mu[t,a]$ (resp. $\lambda[t,a]$) instead of $\mu[t,[a]]$ (resp. $\lambda[t,[a]]$). It is easy to check that $(\mu[t,a],\lambda[t,a])=(\mu[t',a],\lambda[t',a])$ for any $[a]$ if and only if $t$ and $t'$ are in the same $W_G(T_{\mu,\lambda})^F$-conjugacy class.

 Let $(\mu',\lambda')$ and $(\mu'',\lambda'')$ be two partitions such that $\mu=\mu'\bigcup\mu''$ and $\lambda=\lambda'\bigcup\lambda''$. Then we fix an embedding
 \begin{equation}\label{e1}
 T_{\mu,\lambda}=T_{\mu',\lambda'}\times T_{\mu'',\lambda''}\hookrightarrow \sp_{2(|\mu'|+|\lambda'|)}\times \sp_{2(|\mu''|+|\lambda''|)}\hookrightarrow \sp_{2n}.
 \end{equation}
In a similar way to the general linear groups case, we have
 \begin{equation}\label{cw3}
 |W_{G}(T_{\mu,\lambda})^F|\cong |C_{W,F}(w_{\mu,\lambda})|=C_{\mu,\mu'}\cdot C_{\lambda,\lambda'}\cdot|W_{\sp_{2(|\mu'|+|\lambda'|)}}(T_{\mu',\lambda'})^F|\cdot|W_{\sp_{2(|\mu''|+|\lambda''|)}}(T_{\mu'',\lambda''})^F|.
 \end{equation}

From now on, we fix a $t\in T_{\mu}^F$. With the embedding (\ref{e1}), we now turn to describe the set
\[
D(\mu,\lambda,\mu',\lambda',t):=\{t'\in T_{\mu',\lambda'}^F\textrm{ such that }t'=^{w'}t|_{T_{\mu',\lambda'}}\textrm{ for some } w'\in W_{\sp_{2n}}(T_{\mu,\lambda})^F\}.
 \]
It is easily seen that
\begin{itemize}

\item  ${}^{w'}t'\in D(\mu,\lambda,\mu',\lambda',t)$ for each $w'\in W_{\sp_{2(|\mu'|+|\lambda'|)}}(T_{\mu',\lambda'})^F$.

\item  $\mu'[t',a]\subset \mu[t,a]$ for each $[a]$.

\item  $\lambda'[t',a]\subset \lambda[t,a]$ for each $[a]$.

\item $(\mu[t',a],\lambda[t',a])=(\mu[t'',a],\lambda[t'',a])$ for any $[a]$ if and only if $t'$ and $t''$ are in the same $W_{\sp_{2(|\mu'|+|\lambda'|)}}(T_{\mu',\lambda'})^F$-conjugacy class.
\end{itemize}
Hence, we get an injection from the set of $ W_{\sp_{2(|\mu'|+|\lambda'|)}}(T_{\mu',\lambda'})^F$-conjugacy classes in $D(\mu,\lambda,\mu',\lambda',t)$ to the set
\[
P(\mu,,\lambda,\mu',\lambda',t):=\left\{(f, f'):A\times A\to P\times P\ \Big|
{\mbox{$\begin{array}{c}\bigcup_{[a]}f([a])=\mu',\ \bigcup_{[a]}f'([a])=\lambda'\\
\textrm{and }f([a])\subset \mu[t,a],\ f'([a])\subset \lambda[t,a]\\
\end{array}$}}\right\}
\]
where $A$ is the set of indexes $[a]$ and $P$ is the set of partitions of any integers.

\begin{lemma}\label{2sp}
Keep the notations and assumptions as above, there is a bijection from $P(\mu,\lambda,\mu',\lambda',t)$ to the $W_{\sp_{2(|\mu'|+|\lambda'|)}}(T_{\mu',\lambda'})^F$-conjugacy classes in $D(\mu,\lambda,\mu',\lambda',t)$.
\end{lemma}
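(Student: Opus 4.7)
The plan is to mirror the proof of Lemma \ref{2gl} (and Lemma \ref{2u}), treating the $\mu'$-part by the split (GL-type) construction and the $\lambda'$-part by the non-split (U-type) construction. Injectivity has essentially already been verified: the three bullet points preceding the lemma statement show that the assignment $t' \mapsto \bigl([a]\mapsto(\mu'[t',a],\lambda'[t',a])\bigr)$ is well-defined, $W_{\sp_{2(|\mu'|+|\lambda'|)}}(T_{\mu',\lambda'})^F$-invariant, and separates distinct conjugacy classes. The content of the lemma is therefore to exhibit, for each $(f,f')\in P(\mu,\lambda,\mu',\lambda',t)$, an explicit semisimple $t'\in T_{\mu',\lambda'}^F$ lying in $D(\mu,\lambda,\mu',\lambda',t)$ and satisfying $\mu'[t',a]=f([a])$ and $\lambda'[t',a]=f'([a])$ for every $[a]$.

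First I would decompose
\[
T_{\mu',\lambda'}=\prod_{[a]}T_{f([a])}\times\prod_{[a]}T_{f'([a])},
\]
noting that each $T_{f([a])}$ is an $F$-stable maximal torus of a GL-factor and each $T_{f'([a])}$ is an $F$-stable maximal torus of a U-factor, sitting inside $\sp_{2(|\mu'|+|\lambda'|)}$ via the standard Levi embedding. The containments $f([a])\subset\mu[t,a]$ and $f'([a])\subset\lambda[t,a]$, together with the divisibility facts $\#[a]\mid\mu[t,a]$ and $\#[a]\mid 2\lambda[t,a]$ recorded before the lemma, give $\#[a]\mid f([a])$ and $\#[a]\mid 2f'([a])$, which is exactly what is needed to make the following constructions $F$-stable.

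Next, for each $[a]$ I would build local components. On the $\mu'$ side, copy the GL construction from Lemma \ref{2gl}: start from $t_{0,f([a])}=\mathrm{diag}(a,a^q,\ldots,a^{q^{|f([a])|-1}})$ in the diagonal torus of $\GGL_{|f([a])|}$, observe $F(w_{f([a])}\cdot t_{0,f([a])})=t_{0,f([a])}$ using $a^{q^{\#[a]}}=a$, and transport to $t_{f([a])}\in T_{f([a])}^F$. On the $\lambda'$ side, copy the U construction from Lemma \ref{2u}: use $t_{0,f'([a])}=\mathrm{diag}(a,a^{-q},\ldots,a^{(-q)^{|f'([a])|-1}})$ in place of the GL version (now $a^{(-q)^{\#[a]}}=a$ is what ensures $F$-stability), and transport to $t_{f'([a])}\in T_{f'([a])}^F$. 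Setting
\[
t':=\prod_{[a]}t_{f([a])}\cdot\prod_{[a]}t_{f'([a])}\in T_{\mu',\lambda'}^F,
\]
the eigenvalue multiplicities are visibly correct, so $(\mu'[t',a],\lambda'[t',a])=(f([a]),f'([a]))$.

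The one piece of bookkeeping left is to verify $t'\in D(\mu,\lambda,\mu',\lambda',t)$, i.e.\ to produce $w'\in W_{\sp_{2n}}(T_{\mu,\lambda})^F$ with ${}^{w'}t|_{T_{\mu',\lambda'}}=t'$. This is the analogue of the step in Lemma \ref{2gl} that uses the decomposition (\ref{cw2}); here one uses the symplectic analogue (\ref{cw3}), which permits us to conjugate the $\mu$-blocks and $\lambda$-blocks independently by elements of $\psi(\BB{S}_{k_1}\times\cdots)$ to match the $[a]$-labelling of $t'$, and then conjugate inside each $W_{\sp_{2(|\mu'|+|\lambda'|)}}(T_{\mu',\lambda'})^F$-orbit on $T_{\mu',\lambda'}^F$ (using the action formulas (\ref{act1}), (\ref{act2})) to reach $t'$ exactly. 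This last matching step is the only place that is genuinely new relative to the GL/U cases, since one must track both the cyclic GL-type action and the cyclic-plus-inverse U-type action simultaneously; but since the two torus types sit in independent factors of $T_{\mu,\lambda}$, the argument splits as a product and no new difficulty arises.
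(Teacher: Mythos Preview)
Your overall strategy is right and matches the paper: injectivity is already handled by the bullet points, and surjectivity is to be shown by explicitly building a $t'$ from each $(f,f')$, treating the $\mu'$-blocks by the GL recipe and the $\lambda'$-blocks by a ``unitary-type'' recipe. The $\mu'$-construction you give is correct.

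The gap is in your $\lambda'$-construction. You propose $t_{0,f'([a])}=\mathrm{diag}\bigl(a,a^{-q},\ldots,a^{(-q)^{m-1}}\bigr)$ and justify $F$-stability by $a^{(-q)^{\#[a]}}=a$. This is the Lemma~\ref{2u} construction, which is adapted to the \emph{unitary} convention $[a]=\{a^{(-q)^k}\}$. In the symplectic setting one has $[a]=\{a^{q^k}\}$, and the $\lambda_j$-block of $T_0^{w_{\mu,\lambda}F}$ consists precisely of tuples $(x,x^q,\ldots,x^{q^{m-1}})$ with $x^{q^m}=x^{-1}$ (this is an immediate consequence of (\ref{act2})). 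Your tuple fails the relation $x_{k+1}=x_k^{\,q}$ already at $k=1$ unless $a^2=1$, so it does not lie in the required torus. Likewise, the identity you invoke is not the one governing $F$-stability here.

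The paper's construction is the correct fix: take $t_{0,f'([a])_j}=\mathrm{diag}\bigl(a,a^{q},\ldots,a^{q^{m-1}}\bigr)$ and check $a^{-q^{m}}=a$. The point is that the presence of $a$ in a $\lambda$-block forces $a^{-1}\in[a]$; for $a\neq\pm1$ this makes $\#[a]$ even with $a^{q^{\#[a]/2}}=a^{-1}$, and since each part $m=f'([a])_j$ satisfies $\#[a]\mid 2m$ but $\#[a]\nmid m$, one gets $a^{q^m}=a^{q^{\#[a]/2}}=a^{-1}$ as required. Once you replace your $\lambda'$-element and its justification by this one, the rest of your argument (including the use of (\ref{cw3}) to find the conjugating $w'$) goes through exactly as you outline.
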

\begin{proof}
The proof is similar to Lemma \ref{2gl}. To show this injection is actually a bijection, we attach to a pair of map $(f, f') \in P(\mu,,\lambda,\mu',\lambda',t)$ a semisimple $t'\in D(\mu,\lambda,\mu',\lambda',t)$ as follows. Put
\[
G_{f,f',[a]}:=\left\{
  \begin{array}{ll}
\GGL_{|f([a])|+|f'([a])|}&\textrm{ if $\#[a]$ is odd};\\
\UU_{|f([a])|+|f'([a])|}&\textrm{ if $\#[a]$ is even}.
\end{array}
\right.
\]
Assume that $f([a])=(f([a])_1,\cdots,f([a])_{k'})$ and $f'([a])=(f'([a])_1,\cdots,f'([a])_{l'})$. We have a decomposition of $T_{\mu',\lambda'}$:
\[
T_{\mu',\lambda'}=\prod_{[a]}\left(T_{f([a])}\times  T_{f'([a])}\right)=\prod_{[a]}\left(\prod_{i}T_{f([a])_i}\times  \prod_{j}T_{f'([a])_j}\right)
\]
where $T_{f([a])}\times T_{f'([a])}$ is an $F$-stable maximal torus of $G_{f,f',[a]}$, and the $F$-rational points of the torus $T_{f([a])}=\prod_{i}T_{f([a])_i}$ (resp. $T_{f'([a])}=\prod_{j}T_{f'([a])_j}$) are of the form:
\[
 \prod_{i}\GGL_{1}(\bb{F}_{q^{f([a])_i}}) \quad \left(\textrm{ resp. $ \prod_{j}\UU_{1}(\bb{F}_{q^{f'([a])_j}})$}\right).
  \]
  If $G_{f,f''[a]}=\GGL_{|f([a])+f'([a])|}$, We take a semisimple $t_{f,f',[a]}\in T_{f([a])}^F\times T_{f'([a])}^F$ as in the general linear groups case.

Assume that $G_{f,f''[a]}=\UU_{|f([a])+f'([a])|}$. Since the torus $T_{f([a])}$ is a product of $\GGL_{1}(\bb{F}_{q^{f([a])_i}})$, we can take a semisimple $t_{f[a]}\in T_{f([a])}^F$ similarly to the general linear groups case. Consider the natural embedding:
\[
T_{f'([a])_j}\hookrightarrow \UU_{f'([a])_j}.
\]
With above embedding, the torus $T_{f'([a])_j}$ becomes an $F$-stable maximal torus of $\UU_{f'([a])_j}$, and then $T_{f'([a])}$ is corresponding to $w_{f'[a]}$ where $w_{f'[a]}$ is in the Weyl group of $\UU_{f'([a])}$. Let $T_{0,f'([a])_j}$ be the $F$-stable maximal torus of $\UU_{f'([a])_j}$ consisting of diagonal matrices. Put
\[
t_{0,f'([a]_j)}=\rm{diag}\left(a,a^q,a^{q^2}\cdots,a^{q^{f([a])_j-1}}\right)\in T_{0,f'([a])}.
\]
Recall that $\#[a]$ is even, and $a^{-q^{\frac{\#[a]}{2}}}=a$. It follows from $\#[a]\big|2\lambda[t,a]$ and $f'[a]\subset \lambda[t,a]$ that $\#[a]\big|2f'(a)_j$ and $a^{-q^{f'(a)_j}}=a$. Then
\[
\begin{aligned}
w_{f'[a]}\cdot F (t_{0,f'([a]_j)})=&w_{f'[a]}\cdot\rm{diag}\left(a^q,a^{q^2}\cdots,a^{q^{f([a])_j-1}},a^{q^{f([a])_j}}\right)\\
=&\rm{diag}\left(a^{-q^{f([a])_j}},a^q,a^{q^2}\cdots,a^{q^{f([a])_j-1}}\right)\\
=&\rm{diag}\left(a,a^q,a^{q^2}\cdots,a^{q^{|f([a])_j|-1}}\right)\\
=& t_{0,f'([a]_j)}.
\end{aligned}
\]
 Let $g\in \UU_{f'([a])_j}$ such that the image of $gF(g^{-1})$ in the Weyl group is $w_{f'([a])}$. Then $F(t_{f'([a])_j})=t_{f'([a])_j}$ where $t_{f'([a])_j}:={}^{g^{-1}}t_{0,f([a]_j)}\in T_{f'([a]_j)}$. Therefore we find a semisimple element
\[
t_{f,f',[a]}:=t_{f[a]}\times\prod_{j}t_{f([a]_j)}
\]
which plays the same role as $t_{f[a]}$ in the proof of Lemma \ref{2gl}.

Put
\[
t':=\prod_{[a]}t_{f,f',[a]}.
\]
 Clearly, $t'\in D(\mu,\lambda,\mu',\lambda',t)$, $\mu[t',a]=f([a])$, and $\lambda[t',a]=f'([a])$ for each $[a]$.

\end{proof}

Set
\[
M(\mu,\lambda,\mu',\lambda',t,t'):=\{w'\in W_{\sp_{2n}}(T_{\mu,\lambda})^F: t'=^{w'}t|_{T_{\mu',\lambda'}} \}.
\]
Recall that the action of $W_{\sp_{2n}}(T_{\mu,\lambda})^F$ on $T_{\mu',\lambda'}$ is equal to the action of $C_{W,F}(w_{\mu,\lambda})$ on $T_0^{w_{\mu,\lambda}F}$. By our discussion of the action of $C_{W,F}(w_{\mu,\lambda})$ in (\ref{cw3}, we have
\begin{equation}\label{spm}
\begin{aligned}
&\#M(\mu,\lambda,\mu',\lambda',t,t')\\
=&\left\{
  \begin{aligned}
 &|W_{\sp_{2n'}}(T_{\mu',\lambda'},t')^F|\cdot|W_{\sp_{2n''}}(T_{\mu'',\lambda''})^F|\cdot\prod_{[a]}C_{\mu[t,a],\mu'[t',a]}\cdot C_{\lambda[t,a],\lambda'[t',a]}&\textrm{ if }t'\in D(\mu,\lambda,\mu',\lambda',t) ;\\
 &0&\textrm{ otherwise}
\end{aligned}
\right.
\end{aligned}
\end{equation}
where $n'=|\mu'|+|\lambda'|$, $n''=|\mu''|+|\lambda''|$.

\subsection{The even special orthogonal groups}\label{sec3.4}

Let $G=\so^+_{2n}$. The Weyl group $W$ of $G$ is isomorphic to a subgroup group of the semidirect product of $\BB{S}_n$ with an abelian normal subgroup of order $2^n$ acting on $T_0$. More precisely, an element of $W$ sends $(x_1, \cdots , x_n)$ to $(x_1^{\pm1},\cdots, x_n^{\pm1})$ such that there are even $-1$ in $(x_1^{\pm1},\cdots, x_n^{\pm1})$. Clearly, $F$ acts trivially on $W$. Then the conjugacy classes of $W$ are equal to the $F$-conjugacy classes of $W$.

It is well known that the conjugacy classes of $W$ correspond to the bipartitions $(\mu,\lambda)$ of $n$ where $l$ is even, and $\mu=(\mu_1,\cdots,\mu_k)$, $\lambda=(\lambda_{1},\cdots,\lambda_{l})$ with $l$ even. However, it is not a $1-1$ correspondence. If $l=0$ and $\mu_i$ is even for each $i$, then there are two conjugacy classes of $W$ corresponding to $(\mu,\lambda)$. If not, then this correspondence is $1-1$. For a bipartition $(\mu,\lambda)$, if $l\ne0$ or $\mu_i$ is odd for some $i$, we denote the canonical form of an element corresponding it by $w_{\mu,\lambda}$. And if $l=0$ and $\mu_i$ is even, we denote the canonical form by $w_{\mu,\lambda}^\pm$. Since our following calculation process is the same for both $w_{\mu,\lambda}^+$ and $w_{\mu,\lambda}^-$ , for abuse of notations, we write $w_{\mu,\lambda}$ instead of $w_{\mu,\lambda}^+$, and the result of $w_{\mu,\lambda}^-$ is the same as $w_{\mu,\lambda}^+$.

The action of the canonical form of the element $w_{\mu,\lambda}$ on $T_0$ is the same as in subsection \ref{sec3.3}. Let $t = (x_1, \cdots , x_n) \in T_0$. Then we have a natural decompositions:
\[
w_{\mu,\lambda}=\prod_{i=1}^{k}w_{\mu_i}\times\prod_{j=1}^lw_{\lambda_{j}}
\]
and
\[
t=\prod_{i=1}^{k}t_{\mu_i}\times\prod_{j=1}^lt_{\lambda_{j}}
\]
where $w_{\mu_i}$ and $w_{\lambda_j}$ act on $t_{\mu_i}$ and $t_{\lambda_j}$ as in (\ref{act1}) and (\ref{act2}), respectively. Let $w^-_{i'}$ with $1\le i'\le k$ be a element of $W$ such that
\[
w^-_{i'}\cdot t=\prod_{i=1}^{i'-1}t_{\mu_i}\times t_{\mu_{i'}}^{-1}\times\prod_{i=i'+1}^{k}t_{\mu_i}\times\prod_{j=1}^lt_{\lambda_{j}}.
\]
Let
\[
W_{\mu,\lambda}:=\left\langle\prod_{i=1}^{k}\langle w_{\mu_i},w_i^{-}\rangle,\prod_{i=1}^{l}\langle w_{\lambda_i}\rangle\right\rangle\bigcap W.
\]
Set
\[
\begin{aligned}
S_\mu:=\left\{\sigma\in\BB{S}_{|\mu|} \big| \mu_{\sigma(i)}=\mu_i \textrm{ for each }i\right\},\\
S_\lambda:=\left\{\tau\in\BB{S}_{|\lambda|} \big| \lambda_{\tau(j)}=\lambda_j \textrm{ for each }j\right\},\\
\end{aligned}
\]
and
\[
S:=\left\{w\in W \big| w\cdot t=\prod_{i=1}^{k}t_{\mu_{\sigma(i)}}\times\prod_{j=1}^lt_{\lambda_{\tau(j)}}\right\}.
\]
 Then  $S\cong S_\mu\times S_\lambda$, and
 \begin{equation}\label{cw33}
 C_{W,F}(w_{\mu,\lambda})=\left\langle S,W_{\mu,\lambda}\right\rangle.
 \end{equation}
Suppose that $\mu=(\mu_1,\cdots,\mu_k)=(e_1^{k_1},e_2^{k_2},\cdots,e_r^{k_r})$ and $\lambda=(\lambda_1,\cdots,\lambda_l)=(f_1^{l_1},f_2^{l_2},\cdots,f_m^{l_{m}})$. We have
 \[
 \# C_{W,F}(w_{\mu,\lambda}))=
 \left\{
  \begin{array}{ll}
 \left(\prod_{i=1}^{r}(2e_i)^{k_i}k_i!\right)&\textrm{ if $l=0$ and $n_i$ is even};\\
  \frac{1}{2}\cdot\left(\prod_{i=1}^{r}(2e_i)^{k_i}k_i!\right)\cdot\left(\prod_{i=1}^{m}(2f_i)^{l_i}l_i!\right)&\textrm{ otherwise}.
\end{array}
\right.
 \]

Let $(\mu',\lambda')$ and $(\mu'',\lambda'')$ be two partitions such that $|\mu'|+|\lambda'|+|\mu''|+|\lambda''|=n$, $\mu=\mu'\bigcup\mu''$, and $\lambda=\lambda'\bigcup\lambda''$. There is a embedding
 \begin{equation}\label{e2}
 T_{\mu,\lambda}=T_{\mu',\lambda'}\times T_{\mu'',\lambda''}\hookrightarrow \so^\epsilon_{2(|\mu'|+|\lambda'|)}\times \so^{\epsilon'}_{2(|\mu''|+|\lambda''|)}\hookrightarrow \so^+_{2n}
 \end{equation}
 for some $\epsilon,\epsilon'\in\{\pm\}$.
Let
\[
\BB{S}_{\mu',\lambda'}:=S\bigcap (\so^\epsilon_{2(|\mu'|+|\lambda'|)}\times I)
\]
and
\[
\BB{S}_{\mu'',\lambda''}:=S\bigcap (I\times \so^{\epsilon'}_{2(|\mu''|+|\lambda''|)})
\]
where $I$ is the identity in certain groups.
Then
 \begin{equation}
 |W_{\so^+_{2n}}(T_{\mu,\lambda})^F|=C_{\mu,\mu'}\cdot C_{\lambda,\lambda'}\cdot|\BB{S}_{\mu',\lambda'}|\cdot|\BB{S}_{\mu'',\lambda''}|\cdot |W_{\mu,\lambda}|.
 \end{equation}

 Let $g\in G$ such that the image of $gF(g^{-1})$ in the Weyl group $W$ is $w_{\mu,\lambda}$. Let
 \[
S_{\mu,\lambda,\mu',\lambda'}:= \left\langle {}^{g^{-1}}\BB{S}_{\mu',\lambda'},{}^{g^{-1}}\BB{S}_{\mu'',\lambda''},{}^{g^{-1}}W_{\mu,\lambda}\right\rangle
 \]
 be a subgroup of $W_G(T_{\mu,\lambda})$.
We set $T_{\mu_i}$, $T_{\lambda_i}$, $f_t$, $\mu[t,a]$ and $\lambda[t,a]$ as in subsection \ref{sec3.3}. Clearly, $\mu[t,a]=\mu[t',a]$ and $\lambda[t,a]=\lambda[t',a]$ for any $[a]$ if and only if $t$ and $t'$ are in the same $S_{\mu,\lambda,\mu',\lambda'}$-conjugacy class.
We fix a $t\in T_{\mu,\lambda}^F$.
We now turn to describe the set
\[
D^+(\mu,\lambda,\mu',\lambda',t):=\{t'\in T_{\mu',\lambda'}^F\textrm{ such that }t'=^{w'}t|_{T_{\mu',\lambda'}}\textrm{ for some } w'\in W_{\so^+_{2n}}(T_{\mu,\lambda})^F\}.
 \]
 With the embedding (\ref{e2}), the group $S_{\mu,\lambda,\mu',\lambda'}$ sends $T_{\mu',\lambda'}^F\times I$ to itself. So we define the action of $S_{\mu,\lambda,\mu',\lambda'}$ on $T_{\mu',\lambda'}^F$ as its action on $T_{\mu',\lambda'}^F\times I$.

\begin{lemma}\label{2so}
Keep the notations and assumptions as above, there is a bijection from $P(\mu,\lambda,\mu',\lambda',t)$ to $S_{\mu,\lambda,\mu',\lambda'}$-conjugacy classes in $D^+(\mu,\lambda,\mu',\lambda',t)$.
\end{lemma}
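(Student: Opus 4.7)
The plan is to adapt the proof of Lemma \ref{2sp} to the even orthogonal setting, with additional care for the parity constraint that distinguishes $W=W_{\so^+_{2n}}(T_0)$ from the Weyl group of $\sp_{2n}$. First, the assignment $t'\mapsto (f,f')$ given by $f([a])=\mu'[t',a]$ and $f'([a])=\lambda'[t',a]$ takes values in $P(\mu,\lambda,\mu',\lambda',t)$ and is constant on $S_{\mu,\lambda,\mu',\lambda'}$-orbits; the three bulleted properties stated in subsection \ref{sec3.4} just before the lemma (which are inherited from the symplectic case since $S_{\mu,\lambda,\mu',\lambda'}\subset W_{\sp_{2n}}(T_{\mu,\lambda})^F$) show that it descends to a well-defined injection from the set of $S_{\mu,\lambda,\mu',\lambda'}$-conjugacy classes in $D^+(\mu,\lambda,\mu',\lambda',t)$ into $P(\mu,\lambda,\mu',\lambda',t)$.

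For surjectivity, given $(f,f')\in P(\mu,\lambda,\mu',\lambda',t)$, I would construct $t'$ by the block-by-block recipe in the proof of Lemma \ref{2sp}: for each orbit $[a]$, form the auxiliary group
\[
G_{f,f',[a]}:=\left\{
  \begin{array}{ll}
\GGL_{|f([a])|+|f'([a])|}&\textrm{ if $\#[a]$ is odd};\\
\UU_{|f([a])|+|f'([a])|}&\textrm{ if $\#[a]$ is even},
\end{array}
\right.
\]
realize $T_{f([a])}\times T_{f'([a])}$ as a maximal torus of $G_{f,f',[a]}$ by conjugating the diagonal torus by a $g$ with $gF(g^{-1})$ representing $w_{f([a])}\times w_{f'([a])}$, build $t_{f,f',[a]}$ from the diagonal semisimple elements $t_{0,f([a])_i}$ and $t_{0,f'([a])_j}$, and set $t':=\prod_{[a]}t_{f,f',[a]}$. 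By construction $t'\in T_{\mu',\lambda'}^F$ with $\mu[t',a]=f([a])$ and $\lambda[t',a]=f'([a])$, and some $w'$ in the ambient symplectic Weyl group realizes $t'={}^{w'}t|_{T_{\mu',\lambda'}}$ by the argument of Lemma \ref{2sp}.

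The main obstacle specific to the even orthogonal case is promoting $w'$ to an element of $W_{\so^+_{2n}}(T_{\mu,\lambda})^F$, i.e.\ a conjugator whose action on $T_0$ inverts an even number of coordinates. The only sources of inversions available inside $S_{\mu,\lambda,\mu',\lambda'}$ are the $w_{i'}^-$ among the $\mu$-blocks, which appear in $W_{\mu,\lambda}=\langle\ldots\rangle\cap W$ only in pairs, and the cyclic moves $w_{\lambda_j}$ of the $\lambda$-blocks, each contributing a single inversion via (\ref{act2}), with the total length $l$ even by the parametrization of $W$-conjugacy classes. Since $(f,f')$ only prescribes how $\mu$- and $\lambda$-blocks within a common orbit $[a]$ are partitioned between $(\mu',\lambda')$ and $(\mu'',\lambda'')$, any $\lambda$-block swap needed can be matched against a second $\lambda$-block swap of the same length and in the same orbit, and any $w_{i'}^-$-inversion against a paired $w_{i''}^-$-inversion, so the conjugator can always be chosen inside $W$. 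This finite combinatorial parity check, together with the otherwise verbatim construction from Lemma \ref{2sp}, completes the bijection.
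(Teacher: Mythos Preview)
Your overall strategy matches the paper's, whose entire proof reads ``The proof is similar to Lemma~\ref{2sp}.'' You actually go further than the paper by flagging the parity obstruction that separates $W(D_n)$ from $W(B_n)$; the paper does not comment on this at all.

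However, your resolution of that obstruction is muddled. The objects that carry single sign flips are not ``$\lambda$-block swaps'' (block swaps lie in $S\cong S_\mu\times S_\lambda$ and involve no inversions) but the negacyclic generators $w_{\lambda_j}$ of \eqref{act2}, each of which contributes exactly one inversion; your argument does not explain why, in passing from $t$ to the canonical $t'$ you build, the number of such $w_{\lambda_j}$'s used is even. There is a cleaner way to get surjectivity that sidesteps parity entirely: you do not need the specific $t'$ manufactured in Lemma~\ref{2sp}. Given $(f,f')\in P(\mu,\lambda,\mu',\lambda',t)$, choose a pure block permutation $\sigma\in S=S_\mu\times S_\lambda$ which, for each part size and each orbit $[a]$, moves the prescribed number of $\mu$- and $\lambda$-blocks of $t$ into the positions occupied by $T_{\mu',\lambda'}$; this is possible because $S_\mu$ (resp.\ $S_\lambda$) acts as the full symmetric group on blocks of each fixed size. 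Since $S\subset W$ by definition (no sign changes), $\sigma\in C_{W,F}(w_{\mu,\lambda})=W_{\so^+_{2n}}(T_{\mu,\lambda})^F$, so $t':={}^\sigma t|_{T_{\mu',\lambda'}}$ lies in $D^+$ and has invariants $(f,f')$.

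Two minor points: the bulleted properties you invoke live in subsection~\ref{sec3.3}, not~\ref{sec3.4}; in the even orthogonal subsection the relevant injectivity input is the single sentence ``Clearly, $\mu[t,a]=\mu[t',a]$ and $\lambda[t,a]=\lambda[t',a]$ for any $[a]$ if and only if $t$ and $t'$ are in the same $S_{\mu,\lambda,\mu',\lambda'}$-conjugacy class.'' Also note that the conjugacy here is by $S_{\mu,\lambda,\mu',\lambda'}$, a subgroup of $W_{\so^+_{2n}}(T_{\mu,\lambda})^F$, rather than by the Weyl group of the smaller symplectic group as in Lemma~\ref{2sp}; your injectivity paragraph should cite that sentence directly rather than importing the symplectic bullets.
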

\begin{proof}
The proof is similar to Lemma \ref{2sp}.
\end{proof}

Set
\[
M^+(\mu,\lambda,\mu',\lambda',t,t'):=\{w'\in W_{\so^+_{2n}}(T_{\mu,\lambda})^F: t'=^{w'}t|_{T_{\mu',\lambda'}} \}.
\]
Recall that the action of $W_{\so^+_{2n}}(T_{\mu,\lambda})^F$ on $T_{\mu',\lambda'}$ is equal to the action of $C_{W,F}(w_{\mu,\lambda})$ on $T_0^{w_{\mu,\lambda}F}$. By our discussion of the action of $C_{W,F}(w_{\mu,\lambda})$, we have
\begin{lemma}\label{som}
Keep the notations and assumptions as above. Then the following hold.
\begin{itemize}

\item There is a bijection from $P(\mu,\lambda,\mu',\lambda',t)$ to the $S_{\mu,\lambda,\mu',\lambda'}$-conjugacy class in $D^+(\mu,\lambda,\mu',\lambda',t)$;

 \item  We have

\begin{equation}\label{sop}
 \begin{aligned}
&\#M^+(\mu,\lambda,\mu',\lambda',t,t')\\
=&\left\{
  \begin{aligned}
 &|\BB{S}_{\mu',\lambda'}|\cdot|\BB{S}_{\mu'',\lambda''}|\cdot |W_{\mu,\lambda,{}^gt'}|\cdot\prod_{[a]}C_{\mu[t,a],\mu'[t',a]}\cdot C_{\lambda[t,a],\lambda'[t',a]}&\textrm{ if }t'\in D^+(\mu,\lambda,\mu',\lambda',t) ;\\
 &0&\textrm{ otherwise}
\end{aligned}
\right.
 \end{aligned}
\end{equation}
where $W_{\mu,\lambda,{}^gt'}:=\{w\in W_{\mu,\lambda}|{}^w({}^gt\times I)={}^gt\times I\}$.
\item
\[
|W_{\so^+_{2n}}(T_{\mu,\lambda})^F|=C_{\mu,\mu'}\cdot C_{\lambda,\lambda'}\cdot|\BB{S}_{\mu',\lambda'}|\cdot|\BB{S}_{\mu'',\lambda''}|\cdot |W_{\mu,\lambda}|.
\]
\end{itemize}

\end{lemma}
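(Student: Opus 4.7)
My plan is to prove the three bullets of Lemma \ref{som} in parallel with Lemmas \ref{2sp} and \ref{2gl}, exploiting the decomposition $C_{W,F}(w_{\mu,\lambda})=\langle S, W_{\mu,\lambda}\rangle$ from (\ref{cw33}) together with the embedding (\ref{e2}). The third bullet is not really a new statement, only a restatement of the order formula already written out just after (\ref{cw33}) and (\ref{e2}); I would simply factor $|C_{W,F}(w_{\mu,\lambda})|$ through the block decomposition of $S \cong S_\mu\times S_\lambda$ into $S_\mu=(S_\mu\cap\GGL_{|\mu'|})\times(S_\mu\cap\GGL_{|\mu''|})$ (and similarly for $S_\lambda$) to pull out the binomial factors $C_{\mu,\mu'}$ and $C_{\lambda,\lambda'}$, then identify the two remaining symmetric pieces with $\BB{S}_{\mu',\lambda'}$ and $\BB{S}_{\mu'',\lambda''}$, leaving $|W_{\mu,\lambda}|$ as the final factor.

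For the bijection in the first bullet, given $t'\in D^+(\mu,\lambda,\mu',\lambda',t)$ I would attach the pair of maps $(f,f')$ with $f([a])=\mu'[t',a]$ and $f'([a])=\lambda'[t',a]$; the constraints $f([a])\subset\mu[t,a]$ and $f'([a])\subset\lambda[t,a]$ follow since $t'$ is obtained from $t$ by restriction via an element of $W_{\so^+_{2n}}(T_{\mu,\lambda})^F$, and the $S_{\mu,\lambda,\mu',\lambda'}$-invariance follows from the characterization of conjugacy in terms of $(\mu[t',a],\lambda[t',a])$ spelled out just before the statement of Lemma \ref{2so}. For injectivity I use that same characterization. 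For surjectivity, given $(f,f')\in P(\mu,\lambda,\mu',\lambda',t)$, I would construct a semisimple $t'$ exactly as in the proof of Lemma \ref{2sp}: split into $[a]$-blocks, use the general linear prescription when $\#[a]$ is odd and the unitary prescription when $\#[a]$ is even (this is why $\#[a]\,|\,2\lambda[t,a]$ is essential), and then verify $t'\in D^+$ by exhibiting an element of $W_{\so^+_{2n}}(T_{\mu,\lambda})^F$ moving $t|_{T_{\mu',\lambda'}}$ to $t'$.

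For the count in the second bullet, the stabilizer-orbit formula reduces the problem to counting $M^+(\mu,\lambda,\mu',\lambda',t,t')$ as a union of cosets of the pointwise stabilizer of $t'$ in $W_{\so^+_{2n}}(T_{\mu,\lambda})^F$. Using (\ref{cw33}) I factor any $w'\in M^+$ as a product of an element of $S\cong S_\mu\times S_\lambda$ that permutes the $T_{\mu_i}$ and $T_{\lambda_j}$ blocks so that those lying in the $\mu'$/$\lambda'$ factor get matched consistently with $t'$, and an element of $W_{\mu,\lambda}$ that then fixes the resulting torus element in the $(\mu',\lambda')$-part. The permutation contribution is precisely $|\BB{S}_{\mu',\lambda'}|\cdot|\BB{S}_{\mu'',\lambda''}|\cdot\prod_{[a]}C_{\mu[t,a],\mu'[t',a]}\cdot C_{\lambda[t,a],\lambda'[t',a]}$: the two $\BB{S}$-factors permute blocks within $\mu'$/$\lambda'$ and within $\mu''$/$\lambda''$ freely, while the binomial factors count the ways to select which of the $\mu[t,a]$-parts (resp. $\lambda[t,a]$-parts) are assigned to $\mu'[t',a]$ (resp. $\lambda'[t',a]$). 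After these choices, the remaining freedom is the stabilizer $W_{\mu,\lambda,{}^gt'}$ of ${}^gt'\times I$ inside $W_{\mu,\lambda}$, giving the claimed formula.

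The main obstacle is the parity restriction defining $W\subset (\BB{Z}/2)^n\rtimes \BB{S}_n$ (only an even number of sign changes is allowed), which is what separates the even orthogonal case from the symplectic case treated in Lemma \ref{2sp}. Concretely, when I construct the element of $W_{\so^+_{2n}}(T_{\mu,\lambda})^F$ witnessing surjectivity in the first bullet, and when I collect coset representatives in the count of $M^+$, I must ensure the total number of sign flips is even; this is precisely the reason the definition $W_{\mu,\lambda}=\langle\prod\langle w_{\mu_i},w_i^-\rangle,\prod\langle w_{\lambda_i}\rangle\rangle\cap W$ includes the intersection with $W$, and why the stabilizer factor is $W_{\mu,\lambda,{}^gt'}$ rather than the full product. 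Once this parity bookkeeping is handled uniformly across the block decomposition induced by $(f,f')$, the argument runs in the same shape as Lemma \ref{2sp}.
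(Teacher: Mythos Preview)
Your proposal is correct and follows the same approach as the paper. The paper does not give an explicit proof of Lemma~\ref{som}; it simply remarks before the statement that the formula for $\#M^+$ follows ``by our discussion of the action of $C_{W,F}(w_{\mu,\lambda})$,'' the first bullet is a restatement of Lemma~\ref{2so} (whose proof was declared ``similar to Lemma~\ref{2sp}''), and the third bullet is the displayed equation already written after defining $\BB{S}_{\mu'',\lambda''}$. Your write-up makes explicit exactly these three ingredients---the block decomposition of $S\cong S_\mu\times S_\lambda$ yielding the binomial factors, the $(f,f')$-construction for the bijection, and the coset count factoring through $\langle S,W_{\mu,\lambda}\rangle$---and correctly isolates the parity constraint (even number of sign changes) as the only new bookkeeping beyond the symplectic case; this is precisely why the paper replaces the full stabilizer by $W_{\mu,\lambda,{}^gt'}$ and defines $W_{\mu,\lambda}$ via intersection with $W$.
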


Now consider $G = \so_{2n}^-$. The Weyl group $W$ is isomorphic to the Coxeter group $W(D_n) $ of type $D_n$. Let $W(B_n) $ be the Coxeter group of type $B_n$. Then there is a natural injective form $W(D_n)$ to $W(B_n)$. The action of $F$ on $W\cong W(D_n)$ is known to be realized as the conjugation by some element $a\in W(B_n)\setminus W(D_n)$, see \cite[section 11]{St} where the action can be explicitly described. Then the $F$-conjugacy classes of $W(D_n)$ is of shape $\{x^{-1}waxa^{-1} : x \in W(D_n)\}$. By \cite[3.3.6]{C}, $W_G(T_w)^F\cong C_{W(D_n)}(wa)$.

Let $t\in T_{\mu,\lambda }^F$. Let $\mu=\mu'\bigcup \mu''$ and $\lambda=\lambda'\bigcup\lambda''$.
We set $S_{\mu,\lambda,\mu',\lambda'}$, $D^-(\mu,\lambda,\mu',\lambda',t)$ and $M^-(\mu,\lambda,\mu',\lambda',t,t')$ as before. Then the following hold.
\begin{lemma}\label{so2}
Keep the notations and assumptions as above. Then the following hold.
\begin{itemize}

\item There is a bijection from the set $P(\mu,\lambda,\mu',\lambda',t)$ to the $S_{\mu,\lambda,\mu',\lambda'}$-conjugacy classes in the set $D^-(\mu,\lambda,\mu',\lambda',t)$;

 \item  We have
\begin{equation}\label{som2}
  \begin{aligned}
&\#M^-(\mu,\lambda,\mu',\lambda',t,t')\\
=&\left\{
  \begin{aligned}
 &|\BB{S}_{\mu',\lambda'}|\cdot|\BB{S}_{\mu'',\lambda''}|\cdot |W_{\mu,\lambda,{}^gt'}|\cdot\prod_{[a]}C_{\mu[t,a],\mu'[t',a]}\cdot C_{\lambda[t,a],\lambda'[t',a]}&\textrm{ if }t'\in D^-(\mu,\lambda,\mu',\lambda',t) ;\\
 &0&\textrm{ otherwise}
\end{aligned}
\right.
 \end{aligned}
\end{equation}
where $W_{\mu,\lambda,{}^gt'}:=\{w\in W_{\mu,\lambda}|{}^w({}^gt\times I)={}^gt\times I\}$ and $gF(g^{-1})=w_{\mu,\lambda}$.
\item
\[
|W_{\so^-_{2n}}(T_{\mu,\lambda})^F|=C_{\mu,\mu'}\cdot C_{\lambda,\lambda'}\cdot|\BB{S}_{\mu',\lambda'}|\cdot|\BB{S}_{\mu'',\lambda''}|\cdot |W_{\mu,\lambda}|.
\]
\end{itemize}
\end{lemma}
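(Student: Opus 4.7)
My plan is to follow the argument of Lemma~\ref{som} essentially verbatim, with every appeal to $C_{W,F}(w_{\mu,\lambda})$ replaced by the twisted centralizer $C_{W(D_n)}(w_{\mu,\lambda}a)$, using the identification $W_G(T_w)^F\cong C_{W(D_n)}(wa)$ recalled just above the statement. The combinatorial input---the decomposition $T_{\mu,\lambda}=\prod_i T_{\mu_i}\times\prod_j T_{\lambda_j}$, the map $f_t$ from the set $\{\mu_i\}\cup\{\lambda_j\}$ to the set $\{[a]\}$, and the bipartitions $(\mu[t,a],\lambda[t,a])$ parametrizing the factors of $C_{G^F}(t)$---is imported unchanged from the $\so_{2n}^+$ case, since these quantities depend only on $w_{\mu,\lambda}$ and $t$, not on the $\pm$ type of the ambient orthogonal group.

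First I would establish the analog of (\ref{cw33}). Writing the conjugation action of $a$ explicitly on $W(B_n)$ (one distinguished sign flip), one checks that the subgroups $S\cong S_\mu\times S_\lambda$ and $W_{\mu,\lambda}$ of subsection~\ref{sec3.4} survive into $W(D_n)$ and together generate $C_{W(D_n)}(w_{\mu,\lambda}a)$. The only bookkeeping difference is that the parity-of-sign constraint defining $W(D_n)\subset W(B_n)$ interacts with the twist by $a$, so that the embedding (\ref{e2}) now factors through $\so^{\epsilon}_{2(|\mu'|+|\lambda'|)}\times\so^{\epsilon'}_{2(|\mu''|+|\lambda''|)}$ with $\epsilon\epsilon'=-$ rather than $+$. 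This yields the third bullet
\[
|W_{\so^-_{2n}}(T_{\mu,\lambda})^F|=C_{\mu,\mu'}\cdot C_{\lambda,\lambda'}\cdot|\BB{S}_{\mu',\lambda'}|\cdot|\BB{S}_{\mu'',\lambda''}|\cdot|W_{\mu,\lambda}|
\]
by the same orbit--stabilizer counting as in the $\so_{2n}^+$ case.

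For the first bullet, I would attach to each $(f,f')\in P(\mu,\lambda,\mu',\lambda',t)$ the semisimple element $t'=\prod_{[a]}t_{f,f',[a]}$ built exactly as in the proof of Lemma~\ref{2sp}: a $\GGL$-type factor when $\#[a]$ is odd and a $\UU$-type factor when $\#[a]$ is even, each constructed from the diagonal torus via $(a,a^q,\dots,a^{q^{k-1}})$ or $(a,a^{-q},\dots,a^{(-q)^{k-1}})$. The Frobenius-invariance calculation is intrinsic to the centralizer factor $G_{[a]}(t)$ and is untouched by the $\pm$ type of the ambient group, so the same argument shows $t'\in D^-(\mu,\lambda,\mu',\lambda',t)$ with $\mu[t',a]=f([a])$ and $\lambda[t',a]=f'([a])$. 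Combined with the already-noted fact that these bipartitions form a complete invariant of the $S_{\mu,\lambda,\mu',\lambda'}$-orbit, this produces the claimed bijection, and the second bullet (\ref{som2}) then follows by the orbit--stabilizer count that produced (\ref{sop}).

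The main obstacle will be the sign-twist bookkeeping in the corner case $l=0$ with all $\mu_i$ even, where the $F$-conjugacy class $\{x^{-1}w_{\mu,\lambda}axa^{-1}:x\in W(D_n)\}$ can behave differently from the ordinary conjugacy class of $w_{\mu,\lambda}a$ in $W(B_n)$ and where two $W$-conjugacy classes $w_{\mu,\lambda}^\pm$ need to be tracked separately. Verifying that $W_{\mu,\lambda}$ sits inside $C_{W(D_n)}(w_{\mu,\lambda}a)$ and that its orbits on $T_0^{w_{\mu,\lambda}aF}$ match the formula (\ref{som2}) is the one place where the $\so_{2n}^-$ argument genuinely diverges from $\so_{2n}^+$; once this coordinate verification is done, the remainder of the proof is a transcription of Lemma~\ref{som}.
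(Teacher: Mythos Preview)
Your proposal is correct and matches the paper's approach: the paper gives no separate proof of Lemma~\ref{so2}, treating it as the direct analogue of Lemma~\ref{som} once one replaces the ordinary centralizer $C_{W,F}(w_{\mu,\lambda})$ by the twisted centralizer $C_{W(D_n)}(w_{\mu,\lambda}a)$ via the identification $W_G(T_w)^F\cong C_{W(D_n)}(wa)$ set up just before the statement. One small correction to your last paragraph: for $\so_{2n}^-$ the $F$-conjugacy classes correspond to $W(D_n)$-orbits on the nontrivial coset $W(D_n)a\subset W(B_n)$, hence to bipartitions $(\mu,\lambda)$ with $l$ \emph{odd}, so the degenerate case $l=0$ with all $\mu_i$ even that you flag as the main obstacle simply does not occur here---that subtlety is specific to $\so_{2n}^+$.
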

\section{Lusztig correspondence}\label{sec4}
We review some standard facts of Deligne-Lusztig characters and Lusztig correspondence (cf. \cite[Chapter 7, 12]{C}).

\subsection{Deligne-Lusztig characters}
Let $G$ be a connected reductive algebraic
group over $\mathbb{F}_q$. In \cite{DL}, P. Deligne and G. Lusztig defined a virtual character $R^{G}_{T,\theta}$ of $G^F$, associated to an $F$-stable maximal torus $T$ of $G$ and a character $\theta$ of $T^F$.

More generally, let $L$ be an $F$-stable Levi subgroup of a parabolic subgroup $P$ which is not necessarily $F$-stable, and $\pi$ be a representation of the group $L^F$. Then the Deligne-Lusztig induction $R^G_L(\pi)$ is a virtual character of $G^F$. If $P$ is $F$-stable, then the Deligne-Lusztig induction coincides with the parabolic induction
\[
R^G_L(\pi)= \rm{Ind}^{G^F}_{P^F}(\pi).
\]
For example if $T$ is contained in an $F$-stable Borel subgroup $B$, then
\[
R^G_{T,\theta}=\rm{Ind}^{G^F}_{B^F}\theta.
\]

The following facts are standard.

\begin{proposition}[Induction in stages] \label {3.1}
 Let $Q \subset P$ be two parabolic subgroups of $G$, with $F$-stable Levi subgroups $M\subset L$ respectively. Then
\[
R^G_L \circ R_M^L  = R_M^G.
\]
\end{proposition}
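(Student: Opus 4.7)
The plan is to prove transitivity of Deligne-Lusztig induction cohomologically. Write $P = LU_P$ and $Q = MU_Q$. Since $Q \subset P$ and $M \subset L$, one has $U_P \subset U_Q$ together with the decomposition $U_Q = U_P \cdot (U_Q \cap L)$, and $Q \cap L = M \cdot (U_Q \cap L)$ is a parabolic subgroup of $L$ with Levi factor $M$. The three Deligne-Lusztig varieties in play are $Y^G_{U_P} = \{g \in G \mid g^{-1}F(g) \in U_P\}$ (with its analogues), $Y^L_{U_Q \cap L}$, and $Y^G_{U_Q}$, each carrying commuting actions of the ambient reductive group on the left and of the Levi on the right. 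By definition, $R^G_L$, $R^L_M$, and $R^G_M$ are the virtual characters arising from the alternating sums of $\ell$-adic cohomology with compact support of these varieties.

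First I would recall the Deligne-Lusztig character formula
\[
R^G_L(\sigma)(g) \;=\; \frac{1}{|L^F|}\sum_{l \in L^F}\rm{Tr}\!\left((g, l) \,\big|\, H^*_c\bigl(Y^G_{U_P}, \overline{\bb{Q}}_\ell\bigr)\right)\sigma(l^{-1})
\]
and its analogues for $R^L_M$ and $R^G_M$. Substituting $\sigma = R^L_M(\pi)$ into this formula turns the left-hand side of the desired identity into a double sum over $L^F$ and $M^F$, weighted by traces of Frobenius on the cohomology of $Y^G_{U_P}$ and $Y^L_{U_Q \cap L}$.

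Next I would construct a natural $G^F\times M^F$-equivariant morphism $\phi : Y^G_{U_Q} \to Y^G_{U_P}$, induced by the inclusion $U_P \subset U_Q$, and identify the fibers, after trivialization by Lang-Steinberg applied to $L$, with copies of $Y^L_{U_Q \cap L}$ on which $L^F$ acts diagonally. This gives a Künneth-type identification of bimodules
\[
H^*_c\bigl(Y^G_{U_Q}\bigr) \;\cong\; H^*_c\bigl(Y^G_{U_P}\bigr) \otimes_{\overline{\bb{Q}}_\ell L^F} H^*_c\bigl(Y^L_{U_Q \cap L}\bigr).
\]
Plugging this into the double sum and collapsing the internal $L^F$-sum via the trace pairing matches the formula $R^G_M(\pi)(g) = \tfrac{1}{|M^F|}\sum_{m \in M^F}\rm{Tr}((g,m) \mid H^*_c(Y^G_{U_Q}))\pi(m^{-1})$ term by term.

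The main obstacle is precisely the cohomological decomposition above: although the set-theoretic factorization $U_Q = U_P \cdot (U_Q \cap L)$ is immediate, the parabolics $P$ and $Q$ are not assumed $F$-stable, so the Frobenius twists every structure and one must verify that $\phi$ is a locally trivial fibration in the étale topology whose fiber is isomorphic, equivariantly, to the Deligne-Lusztig variety $Y^L_{U_Q \cap L}$ of the intermediate Levi $L$. Once this geometric fibration statement is established (by a standard argument using the Lang-Steinberg surjectivity for $L$ to trivialize the torsor, together with proper base change), the character identity follows formally from the Grothendieck trace formula.
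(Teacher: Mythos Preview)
The paper does not prove this proposition at all: it is stated as one of the ``standard'' facts about Deligne-Lusztig induction and left without argument (the usual reference is Lusztig's original work or \cite{C}). So there is no paper proof to compare against; your sketch is strictly more than what the author provides.

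Your outline follows the standard cohomological route and is essentially correct in spirit, but one detail is garbled. You write that the inclusion $U_P\subset U_Q$ ``induces'' a morphism $\phi:Y^G_{U_Q}\to Y^G_{U_P}$. It does not: the inclusion of unipotent radicals gives the reverse containment $Y^G_{U_P}\subset Y^G_{U_Q}$, and there is no natural projection the other way. The correct geometric statement is that multiplication
\[
Y^G_{U_P}\times Y^L_{U_Q\cap L}\longrightarrow Y^G_{U_Q},\qquad (g,l)\longmapsto gl
\]
is well defined (because $L$ normalizes $U_P$ and $U_Q=U_P\cdot(U_Q\cap L)$), is $G^F\times M^F$-equivariant, and factors through the quotient by the free diagonal $L^F$-action to give an isomorphism $Y^G_{U_P}\times_{L^F}Y^L_{U_Q\cap L}\xrightarrow{\ \sim\ }Y^G_{U_Q}$. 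From this the K\"unneth/tensor identification of bimodules you wrote down follows, and the character identity drops out. If you reorganize your fibration paragraph around this multiplication map (rather than a nonexistent projection), the argument goes through.
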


\subsection{Lusztig correspondence}

Let $G$ be a connected reductive algebraic
group over $\mathbb{F}_q$.
Let $G^*$ be the dual group of $G$. We still denote the Frobenius endomorphism of $G^*$ by $F$. Then there is a natural bijection between the set of $G^F$-conjugacy classes of $(T, \theta)$ and the set of $G^{*F}$-conjugacy classes of $(T^*, s)$ where $T^*$ is an $F$-stable maximal torus in $G^*$ and $s \in   T^{*F}$. We will also denote $R_{T,\theta}^G$  by $R_{T^*,s}^G$ and write $(T^*,s)=(T,\theta)$ if $(T, \theta)$ corresponds to $(T^*, s)$.

 Assume that $(T,\theta)=(T^*,s)$  and $s$ has eigenvalues $\{x_1,\cdots,x_n\}$. We say $[a]\notin s$ or $[a]\notin \theta$ (resp. $s\sim[a]$ or $\theta\sim[a]$) if $[a]\nsubseteq\{x_1,\cdots,x_n\}$ (resp. $\{x_i\}\subseteq[a]$). We abbreviate $[\pm1]\notin s$ and $[\pm1]\notin \theta$ to $\pm1\notin s$ and $\pm1\notin\theta$, respectively.

For a semisimple element $s \in G^{*F}$, define
\[
\mathcal{E}(G^F,s) = \{ \chi \in \rm{Irr}(G^F)  :  \langle \chi, R_{T^*,s}^G\rangle \ne 0\textrm{ for some }T^*\textrm{ containing }s \}.
\]
The set $\mathcal{E}(G^F,s)$ is called  the Lusztig series. We can thus define a partition of $\rm{Irr}(G^F)$ by Lusztig series
i.e.,
\[
\rm{Irr}(G^F)=\coprod_{(s)}\mathcal{E}(G^F,s).
\]

\begin{proposition}[Lusztig]\label{Lus}
There is a bijection
\[
\mathcal{L}_s:\mathcal{E}(G^F,s)\to \mathcal{E}(C_{G^{*F}}(s),1),
\]
extended by linearity to a map between virtual characters satisfying that
\begin{equation}\label{l}
\mathcal{L}_s(\epsilon_G R^G_{T^*,s})=\epsilon_{C_{G^{*}}(s)} R^{C_{G^{*F}}(s)}_{T^*,1}.
\end{equation}
Moreover, we have
\[
\rm{dim}(\pi)=\frac{|G|_{p'}}{|C_{G^*}(s)|_{p'}}\rm{dim}(\cal{L}_s(\pi))
\]
where $|G|_{p'}$ denotes greatest factor of $|G|$ not divided by $p$, and  $\epsilon_G:= (-1)^r$ where $r$ is the $\Fq$-rank of $G$.
In particular, Lusztig correspondence send cuspidal representation to cuspidal representation.
\end{proposition}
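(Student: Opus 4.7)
The plan is to construct $\mathcal{L}_s$ first on the $\mathbb{Z}$-span of uniform class functions in $\mathcal{E}(G^F,s)$ using the formula (\ref{l}) as a definition, then extend to all of $\mathcal{E}(G^F,s)$ via Lusztig's classification of irreducible characters into families. Since $R^G_{T^*,s}$ depends only on the $G^{*F}$-conjugacy class of the pair $(T^*,s)$, the assignment $\epsilon_G R^G_{T^*,s} \mapsto \epsilon_{C_{G^*}(s)} R^{C_{G^{*F}}(s)}_{T^*,1}$ is well-defined on representatives and extends $\mathbb{Z}$-linearly to a map between the spaces of uniform class functions supported on the two Lusztig series in question.

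The key tool for showing that this assignment is an isometry, and hence a bijection on the uniform part, is the Deligne-Lusztig orthogonality relation, which expresses $\langle R^G_{T^*,s}, R^G_{T'^*,s'}\rangle_{G^F}$ as a count of elements of the relative Weyl group $W_G(T^*,T'^*)^F$ that conjugate $(T^*,s)$ to $(T'^*,s')$. The analogous formula holds for $\langle R^{C_{G^{*F}}(s)}_{T^*,1}, R^{C_{G^{*F}}(s)}_{T'^*,1}\rangle$, and a direct matching of the two Weyl-group sets shows that the pairings coincide. The signs $\epsilon_G$ and $\epsilon_{C_{G^*}(s)}$ are exactly what is needed to normalize the uniform characters on each side so that their inner products agree without any change of sign.

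To extend from uniform to arbitrary irreducible characters, I would invoke Lusztig's deeper classification: each series $\mathcal{E}(G^F,s)$ is partitioned into families, and each family is indexed by the same combinatorial data (a quotient attached to a two-sided cell, together with Lusztig's non-abelian Fourier transform on that quotient) as the corresponding family of unipotent characters in $\mathcal{E}(C_{G^{*F}}(s),1)$. Picking any bijection within each family that restricts to the already constructed map on the uniform component yields $\mathcal{L}_s$; this is precisely the source of the non-uniqueness remarked upon in Section \ref{sec1}. The dimension identity then follows by combining the degree formula for Deligne-Lusztig characters, which gives $\dim R^G_{T^*,s} = \pm |G^F|_{p'}/|T^{*F}|$ on both sides of (\ref{l}), and checking that the ratio collapses to $|G|_{p'}/|C_{G^*}(s)|_{p'}$ independently of which member of the family one selects.

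The main obstacle is this non-uniform extension: the orthogonality step pins down the map only on the $\mathbb{Z}$-span of Deligne-Lusztig characters, while individual irreducibles inside a non-uniform family can only be separated by Lusztig's character-sheaf machinery, or equivalently by his non-abelian Fourier transform on the attached quotient. Preservation of cuspidality at the end is by contrast straightforward: an irreducible is cuspidal precisely when it is orthogonal to every $R^G_L(\sigma)$ for every proper $F$-stable Levi $L \subsetneq G$, and the transitivity in Proposition \ref{3.1} together with the formula (\ref{l}) translates this vanishing verbatim into the corresponding vanishing on $C_{G^{*F}}(s)$ along the Levi subgroups of the form $C_{L^*}(s)$, so $\mathcal{L}_s$ sends cuspidals to cuspidals.
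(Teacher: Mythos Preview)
The paper does not give a proof of this proposition at all: it is stated as a standard result due to Lusztig, with a reference to \cite[Chapter 7, 12]{C}, and is simply invoked as background in Section~\ref{sec4}. There is therefore no ``paper's own proof'' to compare your proposal against.

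That said, a brief comment on your sketch. Your outline---define $\mathcal{L}_s$ on the uniform span via (\ref{l}), verify it is an isometry from the Deligne--Lusztig inner-product formula, then extend family-by-family using Lusztig's classification---is the correct architecture, and the acknowledgment that the non-uniform extension is where the real depth lies is accurate. Two points deserve care. First, the dimension identity for a general irreducible does not follow just from the degree formula for $R^G_{T^*,s}$; one needs Lusztig's explicit degree formulas within each family (or the fact that $\mathcal{L}_s$ respects the almost-characters and the Fourier-transform matrix is orthogonal), not merely a ratio check on Deligne--Lusztig characters. Second, your cuspidality argument is too quick: cuspidality is characterized by vanishing of Harish--Chandra restriction to proper \emph{$F$-stable} Levis, and to run the argument in both directions you need that every $F$-stable Levi of $C_{G^{*}}(s)$ arises as $C_{L^{*}}(s)$ for some $F$-stable Levi $L^{*}$ of $G^{*}$ containing $s$. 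This is true, but it is a nontrivial structural fact about centralizers of semisimple elements, not a formal consequence of Proposition~\ref{3.1}.
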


Note that the correspondence $\cal{L}_s$ is usually not uniquely determined.
However, we only consider the uniform case in this paper, and in this case, $\cal{L}_s$ is uniquely determined by (\ref{l}).

\begin{proposition}[Proposition 2.6 in \cite{LW3}]\label{irr}
Let $s$ be a semisimple element of $\UU_n\fq$, which is  $\UU_n\fq$-conjugate to $\rm{diag}(s_1,s_2)$ for some semisimple elements $s_1$ and $s_2$ in $\UU_{n_1}\fq$ and $\UU_{n_2}\fq$ respectively, with $n=n_1+n_2$. Assume that $s_1$ and $s_2$ have no common eigenvalues.  Then for any  $\pi_1\in \mathcal{E}(\UU_{n_1}\fq,s_1)$ and $\pi_2\in\mathcal{E}(\UU_{n_2}\fq,s_2)$, $R^{\UU_n}_{\UU_{n_1}\times\UU_{n_2}}(\pi_1\otimes\pi_2)$ is (up to sign) an irreducible representation. Moreover
\[
R^{\UU_n}_{\UU_{n_1}\times\UU_{n_2}}(\pi_1 \otimes\pi_2) \cong R^{\UU_n}_{\UU_{n_1}\times\UU_{n_2}}(\pi_1'\otimes\pi_2')
\]
if and only if $\pi_1\cong \pi_1'$ and $\pi_2\cong \pi_2'$.
\end{proposition}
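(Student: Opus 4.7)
The plan is to reduce the statement to the unipotent (centralizer) side via the Lusztig correspondence, where the situation becomes transparent because the two factors live in completely independent groups.

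First, I would set up the dual picture. Write $G=\UU_n$, $L=\UU_{n_1}\times\UU_{n_2}$, and dually $L^*=\UU_{n_1}^*\times\UU_{n_2}^*\subset G^*$, with $s=(s_1,s_2)\in L^{*F}$. The crucial observation is that since $s_1$ and $s_2$ have no common eigenvalues, the centralizer $C_{G^*}(s)$ decomposes as $C_{\UU_{n_1}^*}(s_1)\times C_{\UU_{n_2}^*}(s_2)$, which coincides with $C_{L^*}(s)$. Therefore the Lusztig induction $R^{C_{G^*}(s)}_{C_{L^*}(s)}$ is induction from a group to itself and acts as the identity on characters.

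Second, I would invoke the compatibility of Lusztig induction with the Jordan decomposition of characters (Lusztig's theorem, see for example Bonnaf\'e's account). For $\pi_1\in\mathcal{E}(\UU_{n_1}^F,s_1)$ and $\pi_2\in\mathcal{E}(\UU_{n_2}^F,s_2)$, the representation $\pi_1\otimes\pi_2$ lies in $\mathcal{E}(L^F,s)$, and one has the identity
\begin{equation*}
\mathcal{L}_s^G\bigl(\epsilon_G\, R^G_L(\pi_1\otimes\pi_2)\bigr)
\;=\;\epsilon_{C_{G^*}(s)}\, R^{C_{G^*}(s)}_{C_{L^*}(s)}\bigl(\mathcal{L}_{s_1}^{\UU_{n_1}}(\pi_1)\otimes\mathcal{L}_{s_2}^{\UU_{n_2}}(\pi_2)\bigr).
\end{equation*}
Combined with the previous step, the right-hand side is just (up to a sign) the external tensor product $\mathcal{L}_{s_1}(\pi_1)\otimes\mathcal{L}_{s_2}(\pi_2)$ of two irreducible unipotent representations of the classical groups $C_{\UU_{n_1}^*}(s_1)$ and $C_{\UU_{n_2}^*}(s_2)$. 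Since external tensor products of irreducibles are irreducible, this virtual character is, up to sign, a genuine irreducible character of $C_{G^*}(s)^F$ in $\mathcal{E}(C_{G^*}(s)^F,1)$.

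Third, I would transport irreducibility back. Because $\mathcal{L}_s$ is a bijection of $\mathcal{E}(G^F,s)$ onto $\mathcal{E}(C_{G^{*F}}(s),1)$ and extends linearly to virtual characters, the equality above forces $R^G_L(\pi_1\otimes\pi_2)$ itself to be $\pm$ an irreducible representation of $G^F$ lying in $\mathcal{E}(G^F,s)$. For the second assertion, if $R^G_L(\pi_1\otimes\pi_2)\cong R^G_L(\pi_1'\otimes\pi_2')$, then applying $\mathcal{L}_s$ yields $\mathcal{L}_{s_1}(\pi_1)\otimes\mathcal{L}_{s_2}(\pi_2)\cong\mathcal{L}_{s_1}(\pi_1')\otimes\mathcal{L}_{s_2}(\pi_2')$; comparing the restrictions to each factor identifies $\mathcal{L}_{s_i}(\pi_i)\cong\mathcal{L}_{s_i}(\pi_i')$, and bijectivity of the Lusztig correspondence gives $\pi_i\cong\pi_i'$. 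Conversely the implication is trivial.

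The only real obstacle is guaranteeing the compatibility formula between $\mathcal{L}_s$ and $R^G_L$; this is not automatic from the defining property of $\mathcal{L}_s$ (which only pins down uniform characters) when the Lusztig correspondence is not unique. For unitary groups, however, every irreducible is uniform, so $\mathcal{L}_s$ is uniquely determined by \eqref{l} and the compatibility reduces, via transitivity of Deligne--Lusztig induction (Proposition \ref{3.1}), to the statement at the level of $R^G_{T^*,s}$, where it is a direct consequence of the defining equation \eqref{l} applied on both sides.
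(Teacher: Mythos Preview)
The paper does not prove this proposition; it simply imports it from \cite{LW3} (Proposition~2.6 there). Your argument is the standard one and is correct: the key points are that the no-common-eigenvalue hypothesis forces $C_{G^*}(s)=C_{\UU_{n_1}^*}(s_1)\times C_{\UU_{n_2}^*}(s_2)=C_{L^*}(s)$, and that for unitary groups every irreducible is uniform, so the compatibility of $\mathcal{L}_s$ with $R^G_L$ can be checked on Deligne--Lusztig characters via transitivity (Proposition~\ref{3.1}) and the defining relation~\eqref{l}. This is exactly the line of reasoning underlying the cited result, so there is nothing to add.
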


Recall that we have a natural decomposition of $C_{G^{*}}(s)$ with the eigenvalues of $s$
\[
C_{G^{*}}(s)=\prod_{[a]}G^*_{[a]}(s).
\]
Hence we obtain a natural decomposition of $T$ as follows: \[T^*=\prod_{[a]}T^*_{[a]}\] where
$T^{*}_{[a]}$ is an $F$-stable maximal torus of $G^*_{[a]}(s)$. Clearly, $T^{*}_{[a]}$ runs over the $F$-stable maximal torus of $G^*_{[a]}(s)$ when $T^*$ runs over the $F$-stable maximal torus of $G^*$ such that $s\in T^*$. Moreover, if two $F$-stable maximal torus $ T^*$ and $T^{\prime *}$ of $G$ with $s\in T^{* F}$ and $s\in T^{\prime * F}$ are in the same $G^F$-conjugacy class, then $\prod_{[a]}T^{*}_{[a]}$ and $\prod_{[a]}T^{\prime *}_{[a]}$ are in the same $\prod_{[a]}G^*_{[a]}(s)^F$-conjugacy class. In fact, assume that ${}^{g^*}T^*=T^{\prime *}$ with $g^*\in G^{*F}$. Then there is a $w\in W_{G^*}(T^{\prime *})^F$ such that ${}^{w}({}^{g^*}s)=s$.
So $wg^*\in C_{G^{*F}}(s)=\prod_{[a]}G^*_{[a]}(s)^F$ and ${}^{wg^*}(\prod_{[a]}T^{*}_{[a]})=\prod_{[a]}T^{\prime *}_{[a]}$.
Applying the above decomposition to the Lusztig correspondence, we have
\[
\mathcal{L}_s(\epsilon_G R^G_{T^*,s})=\prod_{[a]}\epsilon_{G^*_{[a]}(s)} R^{G^*_{[a]}(s)}_{T^*_{[a]},1}.
\]

\begin{proposition}

Let $\pi$ be an irreducible uniform representation in the Lusztig series $\cal{E}(G^F,s)$, and $\pi[a]$ be the restriction of $\mathcal{L}_s(\pi)$ to $ {G^*_{[a]}(s)^F}$.
 Suppose that
 \[
 \pi=\sum_{(T^*,s)\ \textrm{mod }G^F} C_{T^*}  R^G_{T^*,s}
  \]
  and
  \[
  \pi[a]=\sum_{(T^*_{[a]},1)\ \textrm{mod }G^*_{[a]}(s)^F}  C_{T^*_{[a]}}R^{G^*_{[a]}(s)}_{T^*_{[a]},1}
  \]
  where the sum runs over the geometric conjugacy class of $(T,s)$ and $(T^*_{[a]},1)$, respectively, and
$C_T$ and $C_{T^*_{[a]}}$ are the coefficients. Then we have
\begin{equation}\label{coff}
\epsilon_GC_{T^*}=\prod_{[a]} \epsilon_{G^*_{[a]}(s)} C_{T^*_{[a]}}.
\end{equation}
\end{proposition}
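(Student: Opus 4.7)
The natural approach is to apply the Lusztig correspondence $\mathcal{L}_s$, extended by linearity to virtual characters, to the given expansion of $\pi$, and then match coefficients with the factorization $\mathcal{L}_s(\pi)=\prod_{[a]}\pi[a]$ on the unipotent side.

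First, invoking the defining relation (\ref{l}) of Proposition \ref{Lus}, namely $\mathcal{L}_s(\epsilon_G R^G_{T^*,s})=\epsilon_{C_{G^*}(s)}R^{C_{G^{*F}}(s)}_{T^*,1}$, the plan is to write
\[
\epsilon_G\,\mathcal{L}_s(\pi)=\sum_{(T^*,s)\,\textrm{mod }G^F}C_{T^*}\,\epsilon_{C_{G^*}(s)}\,R^{C_{G^{*F}}(s)}_{T^*,1}.
\]
Next, using the decomposition $C_{G^*}(s)=\prod_{[a]}G^*_{[a]}(s)$ and the corresponding $T^*=\prod_{[a]}T^*_{[a]}$, together with the fact that Deligne--Lusztig characters on a direct product factor as a product of the Deligne--Lusztig characters of the factors and $\epsilon_{C_{G^*}(s)}=\prod_{[a]}\epsilon_{G^*_{[a]}(s)}$, the above rewrites as
\[
\epsilon_G\,\mathcal{L}_s(\pi)=\sum_{(T^*,s)\,\textrm{mod }G^F}C_{T^*}\prod_{[a]}\epsilon_{G^*_{[a]}(s)}R^{G^*_{[a]}(s)}_{T^*_{[a]},1}.
\]
In parallel, expanding $\mathcal{L}_s(\pi)=\prod_{[a]}\pi[a]=\prod_{[a]}\sum_{T^*_{[a]}}C_{T^*_{[a]}}R^{G^*_{[a]}(s)}_{T^*_{[a]},1}$ gives
\[
\epsilon_G\,\mathcal{L}_s(\pi)=\epsilon_G\sum_{(T^*_{[a]})_{[a]}}\prod_{[a]}C_{T^*_{[a]}}R^{G^*_{[a]}(s)}_{T^*_{[a]},1},
\]
where the sum runs over tuples of $G^*_{[a]}(s)^F$-conjugacy classes of pairs $(T^*_{[a]},1)$.

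To conclude, the two summation indices are matched by the bijection $T^*\mapsto(T^*_{[a]})_{[a]}$ established in the paragraph immediately preceding the proposition. Equating coefficients of each $\prod_{[a]}R^{G^*_{[a]}(s)}_{T^*_{[a]},1}$, which is legitimate because Deligne--Lusztig virtual characters indexed by non-conjugate tori are linearly independent by the orthogonality relations, yields
\[
C_{T^*}\prod_{[a]}\epsilon_{G^*_{[a]}(s)}=\epsilon_G\prod_{[a]}C_{T^*_{[a]}}.
\]
Multiplying both sides by $\epsilon_G\prod_{[a]}\epsilon_{G^*_{[a]}(s)}$ (which is $\pm1$) produces exactly (\ref{coff}).

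The only real obstacle is bookkeeping: one must be careful that the bijection between $G^F$-conjugacy classes of $F$-stable maximal tori of $G^*$ containing $s$ and tuples of $G^*_{[a]}(s)^F$-conjugacy classes of the factor tori $T^*_{[a]}$ is both well defined and exhaustive, so that the two indexing sets in the displayed identities genuinely coincide. This is exactly the content of the discussion preceding the proposition, so no new argument is required; the proposition is then a direct consequence of this bookkeeping, the multiplicativity of Deligne--Lusztig characters on direct products, and the defining property of $\mathcal{L}_s$.
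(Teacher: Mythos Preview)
Your proposal is correct and follows essentially the same route as the paper's proof: apply $\mathcal{L}_s$ by linearity using the defining relation~(\ref{l}), factor through the product decomposition $C_{G^*}(s)=\prod_{[a]}G^*_{[a]}(s)$, and compare with the expansion of $\mathcal{L}_s(\pi)=\prod_{[a]}\pi[a]$. If anything, you are slightly more explicit than the paper about why the coefficient comparison is legitimate (the bijection of indexing sets and linear independence of the Deligne--Lusztig characters), which the paper leaves implicit.
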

\begin{proof}
By Proposition \ref{Lus}, we have
\[
\begin{aligned}
&\mathcal{L}_s(\pi)=\mathcal{L}_s\left(\sum_{(T^*,s)\ \textrm{mod }G^F} C_{T^*}R^G_{T^*,s}\right)=\epsilon_G\sum_{(T^*,s)\ \textrm{mod }G^F} \epsilon_{C_{G^{*}}(s)} C_{T^*} R^G_{T^*,1}\\
=&\epsilon_G\sum_{(T^*,s)\ \textrm{mod }G^F}C_{T^*} \prod_{[a]} \epsilon_{G^*_{[a]}(s)} R^{G^*_{[a]}(s)}_{T^*_{[a]},1}.
\end{aligned}
\]
On the other hand, we know that
\[
  \mathcal{L}_s(\pi)=\prod_{[a]}\pi[a]=\prod_{[a]}\sum_{(T^*_{[a]},1)\ \textrm{mod }G^*_{[a]}(s)^F}  C_{T^*_{[a]}}R^{G^*_{[a]}(s)}_{T^*_{[a]},1}=\sum_{(T^*,s)\ \textrm{mod }G^F} \prod_{[a]}  C_{T^*_{[a]}}R^{G^*_{[a]}(s)}_{T^*_{[a]},1},
  \]
which implies (\ref{coff}).
\end{proof}

\section{Reeder's formula}\label{sec5}
This section aims to recall Reeder's multiplicity formula \cite{R} and explain the notations appearing in this formula.
Let $G$ be a classical group over a finite field $\Fq$, and $H$ be a reductive $\Fq$-subgroup of $G$. In this section, we consider the following three cases:
\begin{itemize}

\item $G=\GGL_{n+1}$ and $H=\GGL_n$.

 \item $G=\UU_{n+1}$ and $H=\UU_n$.

\item $G=\so_{2n+1}$ and $H=\so^\epsilon_{2n}$.
 \end{itemize}
 Let $T$ and $S$ be $F$-stable maximal torus of $G$ and $H$, respectively.

\subsection{Construction of $j_{G_s}$}

Let $H^1(F, W)$ denote the set of $F$-conjugacy classes in $W$. Let $[w] \in H^1(F, W)$ denote the $F$-conjugacy class of an element $w \in W$.
For each $F$-stable maximal torus $T$ of $G$, we obtain a class $\mathrm{cl}(T, G)\in H^1(F, W)$ associate to $T$.
It is easy to check that $\mathrm{cl}(T, G)$ is well-defined. If two $F$-stable maximal tori $T_1$ and $T_2$ are $G^F$-conjugate, then $\mathrm{cl}(T_1, G)=\mathrm{cl}(T_2, G)$.

Let $s \in G^F$ be semisimple, and $G_s := C_G(s)^\circ$ be the identity component of the centralizer $C_G(s)$ of $s$ in $G$. Let $T_s$ be an $F$-stable maximal torus of $G_s$ contained in an $F$-stable Borel subgroup of $G_s$, and $W_{G_s}$ be the Weyl group of $T_s$ in $G_s$.
Then we have a natural map for the set of $G_s^F$-conjugacy classes of $F$-stable maximal tori of $G_s$ to the set of $G^F$-conjugacy classes of $F$-stable maximal tori of $G$ by sending a torus in $G_s$ to the same torus in $G$. This induces a map
 \[j_{G_s} : H^1(F, W_{G_s}) \to H^1(F, W),\]
 and it follows that
\begin{equation}
\mathrm{cl}(T, G)=j_{G_s}(\mathrm{cl}(T, G_s)).\label{2.1}
\end{equation}

 \subsection{A partition of $S$}
 We first recall the notations in \cite{R}. Let $I(S)$ be an index set for the following set of subgroups of $G$,
 \[
 \{G_s:s\in S\}.
 \]
Note that if $G$ is a classical group, then $I(S)$ is finite. For $\iota \in I(S)$, denote by $G_{\iota}$ be the corresponding connected centralizer, and put
 \[
 S_\iota:=\{s\in S:G_s=G_\iota\}.
 \]
 The $F$-action on $S$ induces a permutation of $I(S)$, and let $I(S)^F$ be the $F$-fixed points in $I(S)$. Note that if $S_\iota^F$ is nonempty, then $\iota \in  I(S)^F$.

For $\iota \in  I(S)$,  set
 \[
 H_\iota:=(H\cap G_\iota)^\circ.
 \]
We observe that if $G_s=G_\iota$, then
 \[
 s\in H_\iota\subset G_\iota.
 \]
Put $Z_\iota:=Z(G_\iota)\cap S$. Then it is easy to check that
 \[
 \begin{aligned}
 &Z_\iota\subset Z(H_\iota),\\
 &S_\iota\subset Z_\iota \subset S,\\
 &G_\iota=C_{G}(Z_\iota)^\circ.
 \end{aligned}
 \]

For a semisimple element $s\in G^F$,  put \[N_G(s,T)=\{\gamma\in G :s^{\gamma}\in T\}.\] The group $G^F_s$ acts on $N_G(s, T)^F$ by left multiplication, and we set
\[
 \bar{N}_G(s, T)^F := G^F_s   \backslash N_G(s, T)^F.
 \]
By \cite[Corollary 2.3]{R}, we have an explicit formula for $|\bar{N}_G(s, T)^F |$:
 \begin{lemma}\label{3.0}
 Let $\omega \in H^1(F, W)$, and $T$ be an $F$-stable maximal torus corresponding to $\omega$. Then the set $N_G(s, T)^F$ is nonempty if and only if the fiber $j^{-1}_{G_s}(\omega)$ is nonempty, in which case we have
 \[
 |\bar{N}_G(s, T)^F |=\sum_{\omega'\in j^{-1}_{G_s}(\omega)}\frac{|W_G(T)^F|}{|W_{G_s}(T_{\omega'})^F|},
 \]
 where for each $\omega'\in j^{-1}_{G_s}(\omega)$, the torus $T_{\omega'}$ is an $F$-stable maximal torus corresponding to $\omega'$.
\end{lemma}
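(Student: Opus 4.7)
The plan is to count $|N_G(s,T)^F|$ by grouping elements $\gamma \in N_G(s,T)^F$ according to the conjugate torus $T_\gamma := \gamma T\gamma^{-1}$, using the convention $s^\gamma = \gamma^{-1}s\gamma$. Since $s^\gamma \in T$, the torus $T_\gamma$ contains $s$; being an $F$-stable maximal torus of $G$ inside $C_G(s)$, it is automatically an $F$-stable maximal torus of the identity component $G_s$. Conversely, any $F$-stable maximal torus $T'$ of $G_s$ that is $G^F$-conjugate to $T$ arises in this way: picking $\gamma_0 \in G^F$ with $\gamma_0 T \gamma_0^{-1} = T'$, the inclusion $s \in T'$ forces $\gamma_0^{-1} s \gamma_0 \in T$, so $\gamma_0 \in N_G(s,T)^F$. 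Thus $N_G(s,T)^F \neq \emptyset$ iff such a torus $T'$ exists, which by (\ref{2.1}) is iff $j_{G_s}^{-1}(\omega) \neq \emptyset$, giving the first assertion.

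Next I would enumerate pairs $(T',\gamma)$ with $\gamma T\gamma^{-1} = T'$. Fixing a torus $T'$ with some $\gamma_0\in G^F$ satisfying $\gamma_0 T \gamma_0^{-1} = T'$, the set $\{\gamma \in G : \gamma T\gamma^{-1} = T'\}$ equals the left coset $\gamma_0 N_G(T)$; an element $\gamma_0 n$ lies in $G^F$ iff $F(n) = n$ (since $\gamma_0$ is $F$-fixed), so exactly $|N_G(T)^F|$ of them are $F$-rational. The number of $F$-stable maximal tori of $G_s$ in the $G_s^F$-conjugacy class $\omega' \in j_{G_s}^{-1}(\omega)$ is $|G_s^F|/|N_{G_s}(T_{\omega'})^F|$ by orbit-stabilizer applied to the conjugation action of $G_s^F$ on tori. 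Combining these counts,
\[
|N_G(s,T)^F| \;=\; \sum_{\omega' \in j_{G_s}^{-1}(\omega)} \frac{|G_s^F|\,|N_G(T)^F|}{|N_{G_s}(T_{\omega'})^F|}.
\]

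Finally, the left $G_s^F$-action on $N_G(s,T)^F$ is well-defined, because $g \in C_G(s)$ gives $(g\gamma)^{-1}s(g\gamma) = \gamma^{-1}s\gamma \in T$, and free, because left multiplication on $G^F$ is free. Dividing the last displayed identity by $|G_s^F|$ yields
\[
|\bar{N}_G(s,T)^F| \;=\; \sum_{\omega' \in j_{G_s}^{-1}(\omega)} \frac{|N_G(T)^F|}{|N_{G_s}(T_{\omega'})^F|}.
\]
Writing $|N_G(T)^F| = |T^F|\,|W_G(T)^F|$ and $|N_{G_s}(T_{\omega'})^F| = |T_{\omega'}^F|\,|W_{G_s}(T_{\omega'})^F|$ (both by Lang's theorem applied to the connected tori $T$ and $T_{\omega'}$), and using that $T$ and $T_{\omega'}$ are $G^F$-conjugate as tori of $G$ so $|T^F| = |T_{\omega'}^F|$, this ratio reduces to $|W_G(T)^F|/|W_{G_s}(T_{\omega'})^F|$, completing the formula.

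The main obstacle is essentially bookkeeping: one must carefully verify the coset identity $\{\gamma \in G^F : \gamma T\gamma^{-1} = T'\} = \gamma_0 N_G(T)^F$ at the level of $F$-fixed points, and not conflate the two normalizers $N_G(T)$ and $N_{G_s}(T_{\omega'})$ when applying orbit-stabilizer. Once those are pinned down, no deeper input than Lang's theorem for tori and the standard orbit count is required; in particular, the existence of an $F$-rational $\gamma_0$ conjugating $T$ to a given torus $T'$ of class $\omega$ is immediate from the definition of the parametrization $\mathrm{cl}(\cdot,G)$.
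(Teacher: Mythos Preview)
Your proof is correct. The paper does not give its own argument for this lemma; it simply imports it as \cite[Corollary 2.3]{R} from Reeder. Your direct orbit-counting proof --- partition $N_G(s,T)^F$ by the conjugate torus $T_\gamma=\gamma T\gamma^{-1}\subset G_s$, count the fibers as $|N_G(T)^F|$, count the tori in each $G_s^F$-class $\omega'$ as $|G_s^F|/|N_{G_s}(T_{\omega'})^F|$, divide by $|G_s^F|$ for the free left action, and cancel $|T^F|=|T_{\omega'}^F|$ using Lang's theorem --- is the natural argument and matches what Reeder does. One small remark: you should make explicit that every $F$-stable maximal torus $T'$ of $G_s$ that is $G^F$-conjugate to $T$ has $\mathrm{cl}(T',G_s)\in j_{G_s}^{-1}(\omega)$, and conversely; you implicitly use both directions when you replace the sum over tori by the sum over $\omega'\in j_{G_s}^{-1}(\omega)$, and this is exactly the content of (\ref{2.1}).
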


\subsection{Reeder's formula}

Let $\chi$ and $\eta$ be two character of $T^F$ and $S^F$, respectively. Fix a $\iota$. For $ v\in j^{-1}_{G_\iota}(\mathrm{cl}(T,G)), \quad \varsigma\in j^{-1}_{H_\iota}(\mathrm{cl}(S,H))$, we set
\begin{equation}\label{chi}
\chi_v:=\frac{1}{|W_{G_\iota}(T)^F|}\sum_{x\in W_{G}(T)^F }(^x\chi)|_{Z_\iota}\quad \mathrm{and}\quad \eta_\varsigma:=\frac{1}{|W_{H_\iota}(S)^F|}\sum_{y\in W_{H}(S)^F }(^y\eta)|_{Z_\iota}.
\end{equation}
 \begin{proposition}\label{proposition:A}
With above $G$ and $H$. We have
 \begin{equation}\label{reeder}
\langle R^G_{T,\chi},R^H_{S,\eta}\rangle _{H^F}=\sum_{\mbox{\tiny$\begin{array}{c}\iota\in I(S)^F,
\delta_\iota=0\\j_{G_\iota}^{-1}(\mathrm{cl}(T,G))\ne \emptyset
\end{array}$}}\frac{(-1)^{\mathrm{rk}(G_\iota)+\mathrm{rk}(H_\iota)+\mathrm{rk}(T)+\mathrm{rk}(S)}}{|\bar{N}_{H}(\iota,S)^F|}\langle \chi_v, \eta_\varsigma\rangle _{Z_\iota^F},
\end{equation}
where $v\in j_{G_\iota}^{-1}(\mathrm{cl}(T,G))$ and $\varsigma\in j_{H_\iota}^{-1}(\mathrm{cl}(S,H))$.
\end{proposition}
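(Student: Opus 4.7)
This is Reeder's multiplicity formula, and the natural route is to expand both Deligne--Lusztig characters via the Deligne--Lusztig character formula and then reorganize the resulting sums by centralizer type, along the lines of \cite{R}.

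First, I would start with
\[
\langle R^G_{T,\chi},R^H_{S,\eta}\rangle_{H^F} = \frac{1}{|H^F|}\sum_{h\in H^F} R^G_{T,\chi}(h)\,\overline{R^H_{S,\eta}(h)},
\]
and decompose $h = su$ via the Jordan decomposition in $H^F$, so that the sum runs over semisimple $s\in H^F$ and unipotent $u\in H_s^F$. Applying the Deligne--Lusztig character formula to both factors expresses $R^G_{T,\chi}(su)$ as a sum over $x\in G^F$ with ${}^{x^{-1}}s\in T^F$, weighted by $({}^x\chi)(s)$ and a Green function $Q^{G_s}_{{}^{x^{-1}}T}(u)$, and analogously $R^H_{S,\eta}(su)$ is expanded via a sum over $y\in H^F$ with ${}^{y^{-1}}s\in S^F$ and Green function $Q^{H_s}_{{}^{y^{-1}}S}(u)$.

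Second, for each fixed semisimple $s$, I would carry out the sum over unipotent $u\in H_s^F$ using the orthogonality relations for Green functions inside $H_s$. Since $u\in H_s\subset G_s$, the Green function $Q^{G_s}_{{}^{x^{-1}}T}(u)$ must be rewritten in terms of Green functions on the smaller reductive subgroup $H_s$; the unipotent orthogonality in $H_s$ then forces the relevant tori to be $H_s^F$-conjugate and collapses the sum to Weyl-group normalizations. This step is the source of both the sign $(-1)^{\mathrm{rk}(G_\iota)+\mathrm{rk}(H_\iota)+\mathrm{rk}(T)+\mathrm{rk}(S)}$ (accumulated from the rank comparisons at each reduction) and the condition $\delta_\iota = 0$, which records precisely when the pairing of Green functions fails to vanish.

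Third, I would partition the remaining semisimple sum by the index $\iota\in I(S)^F$ so that $G_s = G_\iota$ and $s$ runs through $S_\iota\subset Z_\iota$. Replacing the sum over conjugate maximal tori in $G$ (respectively $H$) by a sum over $\iota$ produces the denominator $|\bar{N}_H(\iota,S)^F|$, while Lemma~\ref{3.0} converts the inner double sum over $x,y$ into sums over $v\in j_{G_\iota}^{-1}(\mathrm{cl}(T,G))$ and $\varsigma\in j_{H_\iota}^{-1}(\mathrm{cl}(S,H))$. The averaged characters $({}^x\chi)|_{Z_\iota}$ and $({}^y\eta)|_{Z_\iota}$, once paired with their Weyl-group normalizing factors, assemble into exactly the $\chi_v$ and $\eta_\varsigma$ of \eqref{chi}, and the sum over $s\in Z_\iota^F$ becomes the inner product $\langle \chi_v,\eta_\varsigma\rangle_{Z_\iota^F}$.

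The main obstacle is the second step: handling the reduction of $Q^{G_s}_{{}^{x^{-1}}T}(u)$ to a Green-function quantity on the smaller group $H_s$ in a way that can be compared with $Q^{H_s}_{{}^{y^{-1}}S}(u)$. Reeder's key geometric input is precisely this analysis of the embedding $H_\iota\hookrightarrow G_\iota$: the quantity $\delta_\iota$ measures the obstruction to a compatible parabolic pairing, and $\delta_\iota = 0$ is the vanishing criterion that cleanly cuts off the support of the final sum. Once this is in place, the rank signs and the factor $|\bar{N}_H(\iota,S)^F|$ fall out of routine bookkeeping against Lemma~\ref{3.0}.
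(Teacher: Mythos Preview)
Your outline is a reasonable reconstruction of the argument underlying Reeder's original derivation in \cite{R}, but it is worth knowing that the paper does not re-prove this at all. The paper's proof consists entirely of citations: the unitary case is Proposition~4.4 of \cite{LW1}; the general linear case is declared analogous; and for the orthogonal case the paper simply observes that Reeder's formula~(9.1) in \cite{R} coincides with \eqref{reeder} because in that setting $|j_{G_\iota}^{-1}(\mathrm{cl}(T,G))|\cdot|j_{H_\iota}^{-1}(\mathrm{cl}(S,H))|\le 1$, so the fiber sums collapse. In other words, the proposition is treated as a known result, and the only content of the paper's proof is the bookkeeping remark that Reeder's more general formula specializes to the stated form.

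Your plan, by contrast, sketches how one would establish the formula from the character formula and Green-function orthogonality. The outline is broadly correct in spirit, and you rightly flag the second step as the crux. One caution: phrasing that step as ``rewriting $Q^{G_s}_{{}^xT}(u)$ in terms of Green functions on $H_s$ and then applying orthogonality in $H_s$'' undersells what is needed. The unipotent sum is a \emph{cross} pairing $\sum_{u\in H_s^F} Q^{G_s}_{T'}(u)\,\overline{Q^{H_s}_{S'}(u)}$ between Green functions on two different groups, and there is no general orthogonality relation for this. Reeder computes it case by case for the specific pairs $(G_\iota,H_\iota)$ arising here (products of a torus with $\mathrm{GL}_{m+1}\supset\mathrm{GL}_m$, $\mathrm{U}_{m+1}\supset\mathrm{U}_m$, or $\mathrm{SO}_{2m+1}\supset\mathrm{SO}_{2m}^\epsilon$), and the invariant $\delta_\iota$ and the rank sign emerge from that explicit computation rather than from a general principle. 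If you intend to write this out in full you will need to import that calculation; otherwise, citing \cite{R} and \cite{LW1} as the paper does is the efficient route.
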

\begin{proof}
Proposition 4.4 in \cite{LW1} is the unitary case of this proposition. Similarly, we can prove the formula for the general linear groups case. For the orthogonal  groups case, since \[
|j_{G_\iota}^{-1}(\mathrm{cl}(T,G))||j_{H_\iota}^{-1}(\mathrm{cl}(S,H))|\le 1
 \]
 (see \cite[(p.593)]{R}), the formula (9.1) in \cite{R} is equal to (\ref{reeder}).
\end{proof}
Here we ignore the definition of $\delta_\iota$. We point out that the condition $\delta_\iota=0$ is equal to the following conditions (c.f. \cite{LW1, R} for details):
\begin{itemize}
\item Suppose that $G=\GGL_{n+1}$ and $H=\GGL_n$ (resp. $G=\UU_{n+1}$ and $H=\UU_n$). Then $\delta_\iota=0$ if and only if there is a semisimple element $s\in S_\iota$ conjugate to $\rm{diag}(s', I_{m})$, where $s'$ is a regular semisimple element of $\GGL_{n-m}(\bb{F}_q)$ (resp. $\UU_{n-m}(\bb{F}_q)$) such that $1$ is not an eigenvalue of $s'$. In this case
\begin{equation}\label{g1}
G_\iota\cong G_s\cong T'\times \GGL_{m+1}\ \textrm{(resp. $ T'\times\UU_{m+1}$)},
\end{equation}
where $T'$ is the centralizer of $s'$ in $\GGL_{n-m}$ (resp. $\UU_{n-m}$).

\item Suppose that $G=\so_{2n+1}$ and $H=\so^\epsilon_{2n}$. Then $\delta_\iota=0$ if and only if there is a semisimple element $s\in S_\iota$ conjugate to $\rm{diag}(s', I_{m})$, where $s'$ is a regular semisimple element of $\so^\epsilon_{2(n-m)}(\bb{F}_q)$ such that $1$ is not an eigenvalue of $s'$. In this case
\begin{equation}\label{g3}
G_\iota\cong G_s\cong T'\times \so_{2m+1},
\end{equation}
where $T'$ is the centralizer of $s'$ in $\so^\epsilon_{2(n-m)}$.
 \end{itemize}

 In the above two cases, we have followed equation (see (9.5) in \cite{R} and (4.10) in \cite{LW1})
 \[
 |\bar{N}_{H}(\iota,S)^F|=\frac{|\bar{N}_{H}(S)^F|}{|\bar{N}_{H_\iota}(S)^F|}=\frac{|W_H(S)^F|}{|W_{H_\iota}(S)^F|}.
 \]

For two elements $\iota_1, \iota_2\in I(S)^F$, we say that $\iota_1\sim\iota_2$ if $G_{\iota_1}^F$ and $G_{\iota_2}^F$ are in the same $H^F$-conjugacy class. Denote the equivalence class of $\iota$ by $[\iota]$.  Let $[I(S)^F]$ be the set of equivalence classes in $I(S)^F$.
Let us denote a typical summand in (\ref{reeder}) by $X_\iota$. Similar to \cite[p.14]{LW1}, we get $X_{\iota_1}=X_{\iota_2}$ if $\iota_1\sim\iota_2$.
Then (\ref{reeder})  can be rewritten as
\[
\sum_{\mbox{\tiny$\begin{array}{c}\iota\in I(S)^F,
\delta_\iota=0\\j_{G_\iota}^{-1}(\mathrm{cl}(T,G))\ne \emptyset
\end{array}$}}X_\iota=\sum_{\mbox{\tiny$\begin{array}{c}[\iota]\in [I(S)^F],
\delta_\iota=0\\j_{G_\iota}^{-1}(\mathrm{cl}(T,G))\ne \emptyset
\end{array}$}} \#[\iota]\cdot X_\iota
\]
where
\[
 \begin{aligned}
X_{\iota}&=\frac{(-1)^{\mathrm{rk}(G_\iota)+\mathrm{rk}(H_\iota)+\mathrm{rk}(T)+\mathrm{rk}(S)}}{|\bar{N}_{H}(\iota,S)^F|}\cdot\langle \chi_v, \eta_\varsigma\rangle _{Z_\iota^F}\\
&=\frac{(-1)^{\mathrm{rk}(G_\iota)+\mathrm{rk}(H_\iota)+\mathrm{rk}(T)+\mathrm{rk}(S)}}{|W_{G_\iota}(T)^F|\cdot|W_{H}(S)^F|}\cdot
\sum_{\mbox{\tiny$\begin{array}{c}y\in W_{H}(S)^F \\
x\in W_{G}(T)^F\\
\end{array}$}}
\langle{}^x\chi,{}^y\eta\rangle _{Z_\iota^F}.
\end{aligned}
\]

 By (\ref{g1}) and (\ref{g3}), each $G_\iota$ is of the form
\[
G_\iota\cong T'\times G'.
\]\
 Then
\[
Z_\iota=T'\times I\hookrightarrow T'\times G'.
\]
Consider their dual groups:
\[
Z_\iota^*=T^{\prime *}\times I\hookrightarrow T^{\prime *}\times G^{\prime *}
\]
where $I$ is the identity in $G^{\prime *}$.
 Clearly, a character of $Z_\iota^F$ corresponds to a element in $Z_{\iota}^{*F}\cong T^{\prime *F}$.

  We must calculate the sum $\sum_{x,y}
\langle{}^x\chi,{}^y\eta\rangle _{Z_\iota^F}$. Assume that $(T,\chi)=(T,t)$ and $(S,\eta)=(S,s)$ with $t\in T^{*F}$ and $s\in S^{*F}$. We find that
\[
\sum_{\mbox{\tiny$\begin{array}{c}y\in W_{H}(S)^F \\
x\in W_{G}(T)^F\\
\end{array}$}}
\langle{}^x\chi,{}^y\eta\rangle _{Z_\iota^F}
=\#\left\{(x,y)\in W_{G^*}(T^*)^F\times W_{H^*}(S^*)^F :{}^xt|_{Z^{*F}_\iota}={}^ys|_{Z^{*F}_\iota}\right\}.
\]
Note that $T^{\prime *}$ is determined by $\iota$. Let
\[
D(T^*,t,\iota):=\{t'\in T^{\prime *F}\textrm{ such that }t'=^{w'}t|_{Z_\iota^{*F}}\textrm{ with } w'\in W_{G^*}(T^*)^F\}.
 \]
For $t'\in D(T^*,t,\iota)$, we set
\[
M(T,t,t',\iota):=\{w'\in W_{G^*}(T^*)^F : t'=^{w'}t|_{Z_\iota^{*F}} \}
\]
and
\[
M(S,s,t',\iota):=\{w'\in W_{H^*}(S^*)^F : t'=^{w'}s|_{Z_\iota^{*F}}\}.
\]
Then
\[
\#\left\{(x,y)\in W_{G}(T)^F\times W_{H}(S)^F :{}^xt|_{Z^{*F}_\iota}={}^ys|_{Z^{*F}_\iota}\right\}
=\sum_{t'\in D(T^*,t,\iota)}\#M(T,t,t',\iota)\cdot\#M(S,s,t',\iota).
\]
So we may rewrite $X_{\iota}$ as
\begin{equation}\label{xx}
X_{\iota}=
\frac{(-1)^{\mathrm{rk}(G_\iota)+\mathrm{rk}(H_\iota)+\mathrm{rk}(T)+\mathrm{rk}(S)}}{|W_{G_\iota}(T)^F|\cdot|W_{H}(S)^F|}\cdot
\sum_{t'\in D(T^*,t,\iota)}\#M(T,t,t',\iota)\cdot\#M(S,s,t',\iota).
\end{equation}

\section{unitary group}\label{sec6}
Let $G=\UU_{n+1}$ be a unitary group, and $H=\UU_n\subset G$ be a subgroup of $G$. Let $T$ and $S$ be two $F$-stable maximal torus of $G$ and $H$, respectively. Suppose that $\mu$ and $\mu^S$ be partitions corresponding to $T$ and $S$, respectively. Let $\chi$ and $\eta$ be characters of $T^F$ and $S^F$, respectively. In this section, we will calculate the multiplicity $\langle R^{G}_{T,\chi},R_{S,\eta}^{H}\rangle _{H^F}$. Recall that
\begin{equation}\label{ur}
\langle R^{G}_{T,\chi},R_{S,\eta}^{H}\rangle _{H^F}=
\sum_{\mbox{\tiny$\begin{array}{c}\iota\in I(S)^F,
\delta_\iota=0\\j_{G_\iota}^{-1}(\mathrm{cl}(T,G))\ne \emptyset
\end{array}$}}X_\iota=\sum_{\mbox{\tiny$\begin{array}{c}[\iota]\in [I(S)^F],
\delta_\iota=0\\j_{G_\iota}^{-1}(\mathrm{cl}(T,G))\ne \emptyset
\end{array}$}} \#[\iota]\cdot X_\iota.
\end{equation}

In \cite{LW1}, we know that the set
\[
\{[\iota]\in[I(S)^F]:\delta_{\iota}=0\textrm{ and }j_{G_\iota}^{-1}(\mathrm{cl}(T,G))\ne \emptyset\}
 \]
 is parameterized by pairs $(m,\mu^\iota)$, where $m\leq n$ is a nonnegative integer and $\mu^\iota$ is a partition of $n-m$ such that $\mu^\iota\subset\mu^S$ and $\mu^\iota\subset\mu$. In this case
 \begin{equation}\label{6.1}
G_\iota\cong T'\times \UU_{m+1}\textrm{ and }H_\iota\cong T'\times \UU_{m}
\end{equation}
 where $T'$ is an $F$-stable maximal torus of $\UU_{n-m}$ corresponding to $\mu^\iota$. We denote $\iota$ by $\iota_{\mu'}$ if $\iota$ corresponds to $\mu'$. For simplicity notations, we write $G_{\mu'}$, $H_{\mu'}$ $X_{\mu'}$ instead of $G_{\iota_{\mu'}}$, $H_{\iota_{\mu'}}$ and $X_{\iota_{\mu'}}$.
By \cite[Proposition 4.6]{LW1}, we have
\begin{equation}\label{x}
 \begin{aligned}
\langle R^{G}_{T,\chi},R_{S,\eta}^{H}\rangle _{H^F}
=&\sum_{\mbox{\tiny$\begin{array}{c}\mu'\subset\mu\\
\mu'\subset\mu^S\\
\end{array}$}}C_{\mu^S,\mu'}\cdot X_{\mu'}.
\end{aligned}
 \end{equation}

First, we consider a case where $(S,\eta)=(S,1)$ and $(T,\chi)=(T_1\times T_2,\theta\otimes1)$ with $1\notin \theta$. Let $\mu^i$ be the partition corresponding to $T_i$. It follows from  \cite[subsection 4.6]{LW1} that
$
X_{\mu'}=0 \textrm{ if } \mu'\nsubseteq \mu^2.
$
Then
 \begin{equation}\label{x1}
 \begin{aligned}
\langle R^{G}_{T_1\times T_2,\theta\otimes1},R_{S,1}^{H}\rangle _{H^F}
=&\sum_{\mbox{\tiny$\begin{array}{c}\mu'\subset\mu^2\\
\mu'\subset\mu^S\\
\end{array}$}}C_{\mu^S,\mu'}\cdot X_{\mu'}.
\end{aligned}
 \end{equation}

 Now we consider a similar case as follows. Suppose that $\eta\sim[a]$ and $(T,\chi)=(T_1\times T_2,\theta\otimes\chi')$ with $[a]\notin \theta$ and $\chi'\sim[a]$. Applies the same arguments in \cite[section 5]{LW1}, for this case, we can conclude that
 \begin{equation}\label{x2}
 \begin{aligned}
\langle R^{G}_{T_1\times T_2,\theta\otimes\chi'},R_{S,\eta}^{H}\rangle _{H^F}
=&\sum_{\mbox{\tiny$\begin{array}{c}\mu'\subset\mu^2\\
\mu'\subset\mu^S\\
\end{array}$}}C_{\mu^S,\mu'}\cdot X_{\mu'}.
\end{aligned}
 \end{equation}
 In particular, if $[a]=1$, then (\ref{x2}) becomes (\ref{x1}).

\subsection{Calculation of (\ref{x2})}\label{sec6.1}
Let $G^*$ and $H^*$ be the dual groups of $G$ and $H$, respectively. Let $T_1^*\times T_2^*=T^*$ and $S^*$ be the $F$-stable torus of $G^*$ and $H^*$ corresponding to $T$ and $S$, respectively. Suppose that $(T_1\times T_2,\theta\otimes\chi')=(T_1^*\times T_2^*,t)=(T_1^*\times T_2^*,t_0\times t')$ and $(S,\eta)=(S^*,s)$. In this subsection, we assume that $[a]\sim t'$ and $[a]\sim s$.
Consider the natural embedding:
\[
t_0\times t'\in T_1^{*F}\times T_2^{*F}\hookrightarrow\UU_{|\mu^1|}\fq\times\UU_{|\mu^2|}\fq
\]
and
\[
s\in S^{*F}\hookrightarrow\UU_{n}\fq.
\]
By the decomposition in section \ref{sec2.2}, we have
\[
C_{H^{*F}}(s)=H^*_{[a]}(s)^{F}
\]
and
\[
C_{\UU_{|\mu^2|}\fq}(t')=(\UU_{|\mu^2|})_{[a]}(t')^F
\]
where $H^*_{[a]}(s)$ and $(\UU_{|\mu^2|})_{[a]}(t')$ are either two general linear groups or two unitary groups. More precisely, they are unitary groups if $a^{-1}=a^{q^k}$ for some $k\in\bb{Z}$ and general linear groups otherwise.
Set $h:=\#[a]$. By our discussion in subsection \ref{sec2.2}, the groups $C_{\UU_{|\mu^2|}\fq}(t')$ and $C_{H^{*F}}(s)$ are of the forms as follows:
\[
\left\{
\begin{array}{ll}
C_{\UU_{|\mu^2|}\fq}(t')\cong\UU_{\frac{|\mu^2|}{h}}(\bb{F}_{q^h}),\quad C_{H^{*F}}(s)\cong\UU_{\frac{n}{h}}(\bb{F}_{q^h})&\textrm{ if $h$ is odd};\\
C_{\UU_{|\mu^2|}\fq}(t')\cong\GGL_{\frac{|\mu^2|}{h}}(\bb{F}_{q^h}),\quad C_{H^{*F}}(s)\cong\GGL_{\frac{n}{h}}(\bb{F}_{q^h}),&\textrm{ otherwise.}
\end{array}
\right.
\]
Consider two classical groups $H'\subset G'$ defined over $\bb{F}_{q^h}$ where
\[
\left\{
\begin{array}{ll}
G'=\UU_{\frac{n}{h}+1},\quad H'=\UU_{\frac{n}{h}}&\textrm{ if $h$ is odd};\\
G'=\GGL_{\frac{n}{h}+1},\quad H'=\GGL_{\frac{n}{h}}&\textrm{ otherwise.}
\end{array}
\right.
\]
We still denote the Frobenius endomorphism of $H'$ and $G'$ by $F$.
Then $H^{\prime F}\cong C_{H^{*F}}(s)$, and the torus $S^*\subset C_{H^{*}}(s)$ can be regarded as an $F$-stable maximal torus of $H'$ corresponding to the partition $\frac{\mu^S}{h}$. For $G'$, we have the natural embedding
\[
T_2\times I_{\frac{n-|\mu^2|}{h}+1}\hookrightarrow C_{\UU_{|\mu^2|}}(t')\times I_{\frac{n-|\mu^2|}{h}+1}\hookrightarrow G'
\]
where $I_{\frac{n-|\mu^2|}{h}+1}$ is the identity.
Then there exists an $F$-stable maximal torus $T'=T'_1\times T_2'$ of $G'$ such that $T_1'$ corresponds to the partition $(1,1\cdots,1)$ and $T_2'=T_2$ corresponds to the partition $\frac{\mu^2}{h}$.
Our aim in this subsection is to prove the following proposition which establishes the connection between (\ref{x1}) and (\ref{x2}).
\begin{proposition}
Keep the notations and assumptions as above. We have
\begin{equation}\label{prop6.1}
\langle R^{G'}_{T_1'\times T_2',\theta'\otimes1},R^{H'}_{S^*,1}\rangle _{H^{\prime F}}=\epsilon_{T,t}\cdot\langle R^G_{T_1\times T_2,\theta\otimes\chi},R^H_{S,\eta}\rangle _{H^F}
\end{equation}
where $1 \notin \theta'$ and $\epsilon_{T,t}\in\{\pm 1\}$ depends on $T$ and $t$.
\end{proposition}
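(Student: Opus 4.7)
The plan is to expand both sides of (\ref{prop6.1}) via Reeder's formula, establish a bijection between the summation indices, and match the summands up to a global sign.

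First, I apply the reduction (\ref{x2}) to the right-hand side to obtain
\[
\langle R^G_{T_1\times T_2,\theta\otimes\chi'},R^H_{S,\eta}\rangle_{H^F} = \sum_{\mu'\subset\mu^2,\ \mu'\subset\mu^S} C_{\mu^S,\mu'} \cdot X_{\mu'},
\]
and I apply (\ref{x1}) (the special case $[a]=1$, which applies directly to the pair $H'\subset G'$ over $\bb{F}_{q^h}$) to the left-hand side. Since $t'\sim[a]$ and $s\sim[a]$, every part of $\mu^2$ and $\mu^S$ is divisible by $h=\#[a]$. Any $\mu'$ contributing a nonzero term must satisfy $\mu'\subset\mu^2$, hence $h\mid \mu'$, so the assignment $\mu'\mapsto\mu'/h$ furnishes a bijection between the summation indices of the two Reeder expansions. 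Moreover, $C_{\mu^S,\mu'}=C_{\mu^S/h,\mu'/h}$ by (\ref{CC}), since the bijection preserves the combinatorial structure that defines these binomial products.

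Second, I compute $X_{\mu'}$ and its counterpart $X'_{\mu'/h}$ using (\ref{xx}). By Lemma \ref{2u} and formula (\ref{um}), both the cardinalities $\#M(T,t,t',\iota)$ and $\#M(S,s,t',\iota)$ are expressed in terms of Weyl group orders and the combinatorial invariants $\mu[t,a]$, $\mu^S[s,a]$, $\mu'[t'',a]$ of the partitions. Under the identifications $C_{H^{*F}}(s)\cong H'^F$ and $C_{\UU_{|\mu^2|}\fq}(t')\cong (\UU_{|\mu^2|})_{[a]}(t')^F\hookrightarrow G'^F$, each partition $\mu'\subset\mu^2\cap\mu^S$ with $h\mid\mu'$ corresponds to $\mu'/h$ in the $G'/H'$ setting, and the local contributions match: the Weyl group orders over $\bb{F}_q$ computed from partitions with parts divisible by $h$ equal the Weyl group orders over $\bb{F}_{q^h}$ computed from the rescaled partitions, by the explicit formulas at the end of subsections \ref{sec3.1} and \ref{sec3.2}. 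The pairings $\langle\chi_v,\eta_\varsigma\rangle_{Z_\iota^F}$ reduce in both cases to the cardinalities of the above $M$-sets divided by the Weyl orders, so they correspond term by term.

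Third, I address the signs. The factor $(-1)^{\rm{rk}(G_\iota)+\rm{rk}(H_\iota)+\rm{rk}(T)+\rm{rk}(S)}$ on the right differs from its analog on the left by a quantity that depends only on the fixed data $T,t$ (and $S,s$, which is determined by them on the $H$-side), not on the running index $\mu'$. Indeed, the variable parts $\rm{rk}(G_\iota)$ and $\rm{rk}(H_\iota)$ both involve the rank of the subtorus $T'$ of shape $\mu^\iota$, whose parity shift under $\mu'\mapsto\mu'/h$ is the same in the $G_\iota$ and $H_\iota$ summands and thus cancels in the sum $\rm{rk}(G_\iota)+\rm{rk}(H_\iota)$. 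What remains is a constant sign $\epsilon_{T,t}\in\{\pm1\}$ depending only on the global ranks of $T,S,G,H,G',H'$, which is the definition alluded to in (\ref{ua}).

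The main obstacle is precisely this sign bookkeeping: because $G',H'$ are unitary groups when $h$ is odd but general linear groups when $h$ is even, and because $\Fq$-ranks of unitary, orthogonal and linear tori behave differently under the substitution $q\rightsquigarrow q^h$, one must verify case by case that the $\mu'$-dependent parities actually cancel. Once this is verified, summing the matched terms over the common index set yields (\ref{prop6.1}).
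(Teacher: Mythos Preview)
Your overall strategy matches the paper's: expand both sides via Reeder's formula (using (\ref{x2}) for the right and its $[a]=1$ specialisation for the left), set up the bijection $\mu'\leftrightarrow\mu'/h$ on summation indices, match the Weyl-group orders and binomial factors $C_{\mu^S,\mu'}=C_{\mu^S/h,\,\mu'/h}$, and then handle the signs. Steps one and two are correct and coincide with the paper.

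The gap is in your third paragraph. You argue that the $\mu'$-dependent part of $(-1)^{\mathrm{rk}(G_\iota)+\mathrm{rk}(H_\iota)}$ comes from the subtorus $T'$ and cancels because $T'$ sits in both $G_\iota$ and $H_\iota$. That cancellation is trivially true ($2\,\mathrm{rk}(T')$ is even), but it is not the relevant mechanism: after it one is still left with $\mathrm{rk}(\UU_{m+1})+\mathrm{rk}(\UU_m)\equiv m=n-|\mu'|\pmod 2$, which \emph{does} depend on $\mu'$. The paper's argument is different and genuinely uses the parity of $h$. When $h$ is odd, $G',H'$ are unitary and the corresponding sign on the left is $(-1)^{(n-|\mu'|)/h}$; since $h$ is odd and $h\mid(n-|\mu'|)$, one has $(n-|\mu'|)/h\equiv n-|\mu'|\pmod 2$, so the $\mu'$-dependent signs match term by term. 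When $h$ is even, $G',H'$ are general linear, the left-hand sign is identically $-1$, while on the right $h\mid\mu'$ forces $|\mu'|$ even so $(-1)^{n-|\mu'|}=(-1)^n$ is also constant; their ratio $(-1)^{n+1}$ is absorbed into $\epsilon_{T,t}$. You correctly flag in your final paragraph that this case-splitting is the main obstacle, so the plan survives; just replace the $T'$-cancellation heuristic with this parity computation. (Also, $\epsilon_{T,t}$ is defined inside the proof, not by (\ref{ua}); the latter defines $\epsilon_{[a]}$.)
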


\begin{proof}
By (\ref{x2}), the RHS of (\ref{prop6.1}) equals
 \[
 \begin{aligned}
\langle R^{G}_{T_1\times T_2,\theta\otimes\chi'},R_{S,\eta}^{H}\rangle _{H^F}
=&\sum_{\mbox{\tiny$\begin{array}{c}\mu'\subset\mu^2\\
\mu'\subset\mu^S\\
\end{array}$}}C_{\mu^S,\mu'}\cdot X_{\mu'}.
\end{aligned}
 \]
By (\ref{xx}), the typical summand $X_{\mu'}$ is of the form:
\[
X_{\mu'}=
\frac{(-1)^{\mathrm{rk}(G_{\mu'})+\mathrm{rk}(H_{\mu'})+\mathrm{rk}(T)+\mathrm{rk}(S)}}{|W_{G_{\mu'}}(T)^F|\cdot|W_{H}(S)^F|}\cdot
\sum_{t''\in D(T^*,t,\iota_{\mu'})}\#M(T,t,t'',\iota_{\mu'})\cdot\#M(S,s,t'',\iota_{\mu'}).
\]
Applying our discussion and notations of torus in the subsection \ref{sec3.1} and subsection \ref{sec3.2}, we have
\[
\left\{
\begin{aligned}
&D(T^*,t,\iota_{\mu'})=D(\mu,\mu',t)\\
&M(T,t,t'',\iota_{\mu'})=M(\mu,\mu',t,t'')\\
&M(S,s,t'',\iota_{\mu'})=M(\mu^S,\mu',s,t'')\\
\end{aligned}\right.
\]
and
\[
X_{\mu'}=
\frac{(-1)^{\mathrm{rk}(G_{\mu'})+\mathrm{rk}(H_{\mu'})+\mathrm{rk}(T)+\mathrm{rk}(S)}}{|W_{G_{\mu'}}(T)^F|\cdot|W_{H}(S)^F|}\cdot
\sum_{t''\in D(\mu,\mu',t)}\#M(\mu,\mu',t,t'')\cdot\#M(\mu^S,\mu',s,t'').
\]
Recall from Lemma \ref{2u} that there is a bijection from the set $P(\mu,\mu',t)$ to the $W_{\UU_{\mu'}}(T_{\mu'})^F$-conjugacy classes in $D(\mu,\mu',t)$. Let $[t'']$ denote the $W_{\UU_{\mu'}}(T_{\mu'})^F$-conjugacy class of $t''\in D(\mu,\mu',t)$, and $f_{[t'']}$ be the corresponding element in $P(\mu,\mu',t)$. Then
 \begin{equation}\label{ttt}
 \#[t'']=\frac{|W_{\UU_{|\mu'|}}(T_{\mu'})^F|}{|W_{\UU_{|\mu'|}}(T_{\mu'},t'')^F|}.
 \end{equation}
Note that the product $\#M(\mu,\mu',t,t'')\cdot\#M(\mu^S,\mu',s,t'')$ does not change when $t''$ varies over a $W_{G'}(T_{\mu'})^F$-conjugacy class.
 We have
\[
\begin{aligned}
X_{\mu'}=&
\frac{(-1)^{\mathrm{rk}(G_{\mu'})+\mathrm{rk}(H_{\mu'})+\mathrm{rk}(T)+\mathrm{rk}(S)}}{|W_{G_{\mu'}}(T)^F|\cdot|W_{H}(S)^F|}\cdot
\sum_{f_{[t'']}\in P(\mu,\mu',t)}\#[t'']\cdot\#M(\mu,\mu',t,t'')\cdot\#M(\mu^S,\mu',s,t'')\\
=&
\frac{(-1)^{\mathrm{rk}(G_{\mu'})+\mathrm{rk}(H_{\mu'})+\mathrm{rk}(T)+\mathrm{rk}(S)}}{|W_{G_{\mu'}}(T)^F|\cdot|W_{H}(S)^F|}\cdot
\sum_{f_{[t'']}\in P(\mu,\mu',t)}\frac{|W_{\UU_{|\mu'|}}(T_{\mu'})^F|\cdot\#M(\mu,\mu',t,t'')\cdot\#M(\mu^S,\mu',s,t'')}{|W_{\UU_{|\mu'|}}(T_{\mu'},t'')^F|}.\\
\end{aligned}
\]
Since $s\sim [a]$, it follows from Lemma \ref{2u} and (\ref{um}) that $M(\mu^S,\mu',s,t'')\ne 0$ if and only if $t''\sim[a]$, which implies that $M(\mu^S,\mu',s,t'')\ne 0$ if and only if
\[
f_{[t'']}([a'])=\left\{
\begin{array}{ll}
\mu',&\textrm{ if }[a']=[a];\\
0,&\textrm{otherwise }.\\
\end{array}
\right.
\]
In other words, the sum $\sum_{f_{[t'']}\in P(\mu,\mu',t)}$ has only one term, and by abuse of notations, we still denote the index of this term by $f_{[t'']}$. By (\ref{uw}) and (\ref{um}),
\begin{equation}\label{xmp}
\begin{aligned}
X_{\mu'}=&\frac{(-1)^{\mathrm{rk}(G_{\mu'})+\mathrm{rk}(H_{\mu'})+\mathrm{rk}(T)+\mathrm{rk}(S)}}{|W_{G_{\mu'}}(T)^F|\cdot|W_{H}(S)^F|}\cdot
\frac{|W_{\UU_{|\mu'|}}(T_{\mu'})^F|\cdot\#M(\mu,\mu',t,t'')\cdot\#M(\mu^S,\mu',s,t'')}{|W_{\UU_{|\mu'|}}(T_{\mu'},t'')^F|}\\
=&\frac{(-1)^{\mathrm{rk}(G_{\mu'})+\mathrm{rk}(H_{\mu'})+\mathrm{rk}(T)+\mathrm{rk}(S)}\cdot|W_{\UU_{|\mu'|}}(T_{\mu'})^F|}{|W_{G_{\mu'}}(T)^F|\cdot|W_{H}(S)^F|\cdot|W_{\UU_{|\mu'|}}(T_{\mu'},t'')^F|}\\
&\times|W_{\UU_{|\mu'|}}(T_{\mu'},t'')^F|\cdot|W_{G_{\mu'}}(T)^F|\cdot C_{\mu^2,\mu'}\cdot|W_{\UU_{|\mu'|}}(T_{\mu'},t'')^F|\cdot|W_{H_{\mu'}}(T)^F|\cdot C_{\mu^S,\mu'}\\
=&(-1)^{\mathrm{rk}(G_{\mu'})+\mathrm{rk}(H_{\mu'})+\mathrm{rk}(T)+\mathrm{rk}(S)} C_{\mu^2,\mu'}\cdot|W_{\UU_{|\mu'|}}(T_{\mu'},t'')^F|.
\end{aligned}
\end{equation}
Hence, the RHS of (\ref{prop6.1}) becomes
 \begin{equation}\label{xx1}
 \begin{aligned}
\langle R^{G}_{T_1\times T_2,\theta\otimes\chi'},R_{S,\eta}^{H}\rangle _{H^F}
=&\sum_{\mbox{\tiny$\begin{array}{c}\mu'\subset\mu^2\\
\mu'\subset\mu^S\\
\end{array}$}}(-1)^{\mathrm{rk}(G_{\mu'})+\mathrm{rk}(H_{\mu'})+\mathrm{rk}(T)+\mathrm{rk}(S)} C_{\mu^S,\mu'}\cdot C_{\mu^2,\mu'}\cdot|W_{\UU_{|\mu'|}}(T_{\mu'},t'')^F|.
\end{aligned}
 \end{equation}

Now we turn to the LHS of (\ref{prop6.1}). If $h$ is odd, then $G'$ and $H'$ are unitary groups. By \cite[Proposition 4.6 and (5.3)]{LW1}, we have
 \begin{equation}\label{xx11}
\langle R^{G'}_{T_1'\times T_2',\theta'\otimes1},R^{H'}_{S^*,1}\rangle _{H^{\prime F}}=\sum_{\mbox{\tiny$\begin{array}{c}\frac{\mu'}{h}\subset\frac{\mu^2}{h}\\
\frac{\mu'}{h}\subset\frac{\mu^S}{h}\\
\end{array}$}}
(-1)^{\mathrm{rk}\left(G'_{\frac{\mu'}{h}}\right)+\mathrm{rk}\left(H'_{\frac{\mu'}{h}}\right)+\mathrm{rk}(T)+\mathrm{rk}(S)}
C_{\frac{\mu^S}{h},\frac{\mu'}{h}}\cdot C_{\frac{\mu^2}{h},\frac{\mu'}{h}}|W_{\UU_{|\frac{\mu'}{h}|}}(T_{\frac{\mu'}{h}})^F|
 \end{equation}

It is easy to check that
\begin{itemize}

\item There is a isomorphism
\[
C_{\UU_{|\mu'|}}(t'')^F\cong \UU_{|\frac{\mu'}{h}|}^{\prime F}
\]
such that
\[
W_{\UU_{|\mu'|}}(T_{\mu'},t'')^F\cong W_{\UU_{|\frac{\mu'}{h}|}}(T_{\frac{\mu'}{h}})^F;
\]

\item  $C_{\mu^S,\mu'}=C_{\frac{\mu^S}{h},\frac{\mu'}{h}}$ and $C_{\mu^2,\mu'}=C_{\frac{\mu^2}{h},\frac{\mu'}{h}}$;

\item There is a bijection
\[
\left\{\frac{\mu'}{h}\textrm{ such that }\frac{\mu'}{h}\subset\frac{\mu^2}{h},\ \frac{\mu'}{h}\subset\frac{\mu^S}{h}\right\} \to \left\{\mu'\textrm{ such that }\mu'\subset\mu^2,\ \mu'\subset\mu^S\right\}
\]
by sending $\frac{\mu'}{h}$ to $\mu'$.
\end{itemize}
To summarize, we get
 \[
\sum_{\mbox{\tiny$\begin{array}{c}\mu'\subset\mu^2\\
\mu'\subset\mu^S\\
\end{array}$}} C_{\mu^S,\mu'}\cdot C_{\mu^2,\mu'}\cdot|W_{\UU_{|\mu'|}}(T_{\mu'},t'')^F|=\sum_{\mbox{\tiny$\begin{array}{c}\frac{\mu'}{h}\subset\frac{\mu^2}{h}\\
\frac{\mu'}{h}\subset\frac{\mu^S}{h}\\
\end{array}$}}
C_{\frac{\mu^S}{h},\frac{\mu'}{h}}\cdot C_{\frac{\mu^2}{h},\frac{\mu'}{h}}|W_{\UU_{|\frac{\mu'}{h}|}}(T_{\frac{\mu'}{h}})^F|.
 \]
If $h$ is even, which is the general linear group case, we can get the same equation by the same argument in the proof of \cite[Proposition 4.6 and (5.3)]{LW1}.

To compare (\ref{xx1}) and (\ref{xx11}), we only need to consider the signs on both sides. It is easy to check that
\[
\mathrm{rk}(T_2)+\mathrm{rk}(S)=\mathrm{rk}(T_2')+\mathrm{rk}(S^*)
\]
by direct calculation. Here we emphasize that the LHS is $\bb{F}_q$-rank while the RHS is $\bb{F}_{q^h}$-rank. Set
\[
\epsilon_{T,t}':=\mathrm{rk}(T_1)+\mathrm{rk}(T_1')
 \]
 where $\mathrm{rk}(T_1)$ is the $\bb{F}_q$-rank of $T_1$ and $\mathrm{rk}(T_1')$ is the $\bb{F}_{q^h}$-rank of $T_1'$.
We know that
 \[
\mathrm{rk}(\UU_{n+1})+\mathrm{rk}(\UU_{n})=\left\{\begin{array}{ll} 0, & \textrm{if }n\textrm{ is even},\\
1, & \textrm{if }n\textrm{ is odd},
\end{array}\right.
\]
which implies that
\begin{equation}\label{rkuu}
(-1)^{\mathrm{rk}(G_{\mu'})+\mathrm{rk}(H_{\mu'})}=\left\{\begin{array}{ll}
-1, &\textrm{if } n-|\mu'| \textrm{ is odd};\\
1, & \textrm{otherwise.}
\end{array}\right.
\end{equation}

(i) Suppose that $h$ is odd. The groups $G'$ and $H'$ are unitary groups.
Then
\[
(-1)^{\mathrm{rk}\left(G'_{\frac{\mu'}{h}}\right)+\mathrm{rk}\left(H'_{\frac{\mu'}{h}}\right)}=\left\{\begin{array}{ll}
-1, &\textrm{if } \frac{n}{h}-|\frac{\mu'}{h}| \textrm{ is odd};\\
1, & \textrm{otherwise.}
\end{array}\right.
\]
Hence
\[
(-1)^{\mathrm{rk}\left(G'_{\frac{\mu'}{h}}\right)+\mathrm{rk}\left(H'_{\frac{\mu'}{h}}\right)}=(-1)^{\frac{n}{h}-|\frac{\mu'}{h}|}=(-1)^{h(n-|\mu'|)}=(-1)^{\mathrm{rk}(G_{\mu'})+\mathrm{rk}(H_{\mu'})}.
\]
which implies that
\[
\langle R^{G'}_{T_1'\times T_2',\theta'\otimes1},R^{H'}_{S^*,1}\rangle _{H^{\prime F}}=\epsilon_{T,t}\cdot\langle R^G_{T_1\times T_2,\theta\otimes\chi},R^H_{S,\eta}\rangle _{H^F}.
\]
with $\epsilon_{T,t}:=\epsilon_{T,t}'$.

(ii) Suppose that $h$ is even. Then $G'$ and $H'$ are general linear groups. In this case,
\[
(-1)^{\mathrm{rk}\left(G'_{\frac{\mu'}{h}}\right)+\mathrm{rk}\left(H'_{\frac{\mu'}{h}}\right)}\equiv-1.
\]
and
\[
(-1)^{\mathrm{rk}(G_{\mu'})+\mathrm{rk}(H_{\mu'})}=(-1)^{n=|\mu'|}=(-1)^n.
\]
Let $\epsilon_{T,t}:=\epsilon_{T,t}'\cdot (-1)^{n+1}$. We get
\[
(-1)^{\mathrm{rk}\left(G'_{\frac{\mu'}{h}}\right)+\mathrm{rk}\left(H'_{\frac{\mu'}{h}}\right)}\equiv\epsilon_{T,t}\cdot(-1)^{\mathrm{rk}(G_{\mu'})+\mathrm{rk}(H_{\mu'})}.
\]
Hence
\[
\langle R^{G'}_{T_1'\times T_2',\theta'\otimes1},R^{H'}_{S^*,1}\rangle _{H^{\prime F}}=\epsilon_{T,t}\cdot\langle R^G_{T_1\times T_2,\theta\otimes\chi},R^H_{S,\eta}\rangle _{H^F}.
\]
It is worth to point out that $h$ depends on $t$, so $\epsilon_{T,t}$ only depends on $T$ and $t$.

\end{proof}

\subsection{Decompositions of $G$ and $H$}\label{sec6.2}
In subsection \ref{sec6.1}, we have calculated the pairing $\langle R^G_{T,\chi},R^H_{S,\eta}\rangle _{H^F}$ when $\chi$ and $\eta$ are relatively simple. In the rest of this section, we aim to calculate the multiplicity in the general case and deduce the general case from the case in subsection \ref{sec6.1}.

In section \ref{sec2.2}, we have the decompositions:
\[
C_{G^*(\overline{\bb{F}}_q)}(t)=\prod_{[ a]}G^*_{[ a]}(t)(\overline{\bb{F}}_q)
\]
and
\[
C_{H^*(\overline{\bb{F}}_q)}(s)=\prod_{[ a]}H^*_{[ a]}(s)(\overline{\bb{F}}_q)
\]
where $G^*_{[ a]}(t)(\overline{\bb{F}}_q)$ and $H^*_{[ a]}(s)(\overline{\bb{F}}_q)$ are general linear groups if $\#[a]$ is even and are unitary groups otherwise. By our discussion in section \ref{sec3}, we have the natural embedding
\[
T^*=\prod_{[a]}T^*_{\mu[t,a]}\hookrightarrow \prod_{[ a]}G^*_{[ a]}(t)
\]
and
\[
S^*=\prod_{[a]}S^*_{\mu^S[s,a]}\hookrightarrow \prod_{[ a]}H^*_{[ a]}(t)
\]
where $\bigcup_{[a]}\mu[t,a]=\mu$ and $\bigcup_{[a]}\mu^S[s,a]=\mu^S$.

We shall associate to an index $[a]$ a pair of algebra groups $(G'[a],H'[a])$ and a pair of torus $(S'[a],T'[a])$ as follows.
We set
\[
H'[a]:=
\left\{
 \begin{array}{ll}
 G^*_{[ a]}(t)&\textrm{ if }\nu_{[a]}(t)\ge \nu_{[a]}(s);\\
  H^*_{[ a]}(s)&\textrm{ otherwise};
\end{array}
\right.
\]
and
\[
S'[a]:=
\left\{
  \begin{array}{ll}
 T^*_{\mu[t,a]}&\textrm{ if }\nu_{[a]}(t)\ge \nu_{[a]}(s);\\
  S^*_{\mu^S[s,a]}&\textrm{ otherwise}.
\end{array}
\right.
\]
Clearly, $H'[a]=\GGL_{n[a]}$ if $\#[a]$ is even, and $H'[a]=\UU_{n[a]}$ if $\#[a]$ is odd, and $S'[a]$ is an $F$-stable maximal torus of $H'[a]$. Take a classical group $G'[a]$ over $\bb{F}_{q^{\#[a]}}$ as follows:
\[
G'[a]:=
\left\{
  \begin{array}{ll}
 \GGL_{n[a]+1}&\textrm{ if $\#[a]$ is even};\\
  \UU_{n[a]+1}&\textrm{ if $\#[a]$ is odd}.
\end{array}
\right.
\]
The rational points of $G'[a]$ and $H'[a]$ are $\GGL_{n[a]+1}(\bb{F}_{q^{\#[a]}})$ and $\GGL_{n[a]}(\bb{F}_{q^{\#[a]}})$ (resp. $\UU_{n[a]+1}(\bb{F}_{q^{\#[a]}})$ and $\UU_{n[a]}(\bb{F}_{q^{\#[a]}})$) if $\#[a]$ is even (resp. odd).
Then there is a embedding as follows:
\[
\left\{
 \begin{array}{ll}
 S^*_{\mu^S[s,a]}\times I \hookrightarrow H^*_{[ a]}(t)\times I \hookrightarrow G'[a]&\textrm{ if }\nu_{[a]}(t)\ge \nu_{[a]}(s);\\
  T^*_{\mu[t,a]}\times I \hookrightarrow  G^*_{[ a]}(s)\times I \hookrightarrow G'[a]&\textrm{ otherwise}.
\end{array}
\right.
\]
Let
\begin{equation}\label{ta}
T'[a]:=\left\{
 \begin{array}{ll}
 S^*_{\mu^S[s,a]}\times T''[a] &\textrm{ if }\nu_{[a]}(t)\ge \nu_{[a]}(s);\\
  T^*_{\mu[t,a]}\times T''[a] &\textrm{ otherwise}.
\end{array}
\right.
\end{equation}
be an $F$-stable maximal torus of $G'[a]$ with $(-1)^{\rm{rk}(T''[a])}=(-1)^{\#[a]+1}$. The choice of $T''[a]$ and its rank do not matter. Here is a convenient choice for our purpose. Assume the torus $T''[a]$ corresponds to the partition $\mu''[a]$. Then the torus $T'[a]$ and $S'[a]$ correspond to the partitions $\frac{\mu^S[s,a]}{\#[a]}\cup\mu''[a]$ and $\frac{\mu[t,a]}{\#[a]}$ if $\nu_{[a]}(t)\ge \nu_{[a]}(s)$, and to $\frac{\mu[t,a]}{\#[a]}\cup\mu''[a]$ and $\frac{\mu^S[s,a]}{\#[a]}$, otherwise.

\subsection{Calculation of $\langle R^G_{T,\chi},R^H_{S,\eta}\rangle _{H^F}$}\label{secu2}

We will prove that the pairing
$\langle R^G_{T,\chi},R^H_{S,\eta}\rangle _{H^F}$ is equal to a product of some pairings
$\langle R^{G'[a]}_{T'[a],\theta[a]\otimes 1},R^{H'[a]}_{S'[a],1}\rangle _{H^{\prime }[a]^F}$. Here the choice of these characters $\theta[a]$ does not matter, and we only need to assume that they are regular characters satisfying $1\notin\theta[a]$.

First, we compute the typical summand $X_{\mu'}$ in (\ref{x}). With the same argument of subsection \ref{sec6.1}, we have
\begin{equation}\label{63}
 \begin{aligned}
X_{\mu'}
=&\frac{(-1)^{\mathrm{rk}(G_{\mu'})+\mathrm{rk}(H_{\mu'})+\mathrm{rk}(T)+\mathrm{rk}(S)}}{|W_{G_{\mu'}}(T)^F|\cdot|W_{H}(S)^F|}\sum_{t''\in D(T^*,t,\iota_{\mu'})}\#M(T,t,t'',\iota_{\mu'})\cdot\#M(S,s,t'',\iota_{\mu'})\\
=&\frac{(-1)^{\mathrm{rk}(G_{\mu'})+\mathrm{rk}(H_{\mu'})+\mathrm{rk}(T)+\mathrm{rk}(S)}}{|W_{G_{\mu'}}(T)^F|\cdot|W_{H}(S)^F|}\sum_{f_{[t'']}\in P(\mu,\mu',t)}\#[t'']\cdot\#M(\mu,\mu',t,t'')\cdot\#M(\mu^S,\mu',s,t'').\\
\end{aligned}
\end{equation}
It follows from (\ref{um}) that the product $\#M(\mu,\mu',t,t'')\cdot\#M(\mu^S,\mu',s,t'')\ne 0$ if and only if $t''\in D(\mu,\mu',t)\bigcap D(\mu^S,\mu',s)$, which implies $f_{[t'']}\in P(\mu,\mu',t)\bigcap P(\mu^S,\mu',s)$ by Lemma \ref{2u}.
By (\ref{um}) and (\ref{6.1}), one has
\begin{equation}\label{63a}
\#M(\mu,\mu',t,t'')=|W_{\UU_{|\mu'|}}(T_{\mu'},t'')^F|\cdot|W_{G_{\mu'}}(T)^F|\cdot\prod_{[a]}C_{\mu[t,a],\mu'[t'',a]}
\end{equation}
and
\begin{equation}\label{63b}
\#M(\mu^S,\mu',s,t'')=|W_{\UU_{|\mu'|}}(T_{\mu'},t'')^F|\cdot|W_{H_{\mu'}}(T)^F|\cdot\prod_{[a]}C_{\mu^S[s,a],\mu'[t'',a]}.
\end{equation}
Inserting (\ref{cw2}), (\ref{uw}), (\ref{ttt}), (\ref{63a}), and (\ref{63b}) into (\ref{63}), we get
\[
\begin{aligned}
X_{\mu'}
=&\sum_{f_{[t'']}\in P(\mu,\mu',t)\bigcap P(\mu^S,\mu',s)}\frac{(-1)^{\mathrm{rk}(G_{\mu'})+\mathrm{rk}(H_{\mu'})+\mathrm{rk}(T)+\mathrm{rk}(S)}\cdot|W_{\UU_{|\mu'|}}(T_{\mu'})^F|}{|W_{G_{\mu'}}(T)^F|\cdot|W_{H}(S)^F|\cdot|W_{\UU_{|\mu'|}}(T_{\mu'},t'')^F|}\\
&\times|W_{\UU_{|\mu'|}}(T_{\mu'},t'')^F|\cdot|W_{G_{\mu'}}(T)^F|\cdot |W_{\UU_{|\mu'|}}(T_{\mu'},t'')^F|\cdot|W_{H_{\mu'}}(T)^F|\\
&\times\prod_{[a]}C_{\mu[t,a],\mu'[t'',a]}\cdot C_{\mu^S[s,a],\mu'[t'',a]}\\
=&\sum_{f_{[t'']}\in P(\mu,\mu',t)\bigcap P(\mu^S,\mu',s)}\frac{(-1)^{\mathrm{rk}(G_{\mu'})+\mathrm{rk}(H_{\mu'})+\mathrm{rk}(T)+\mathrm{rk}(S)} }{C_{\mu^S,\mu'}}\\
&\times|W_{\UU_{|\mu'|}}(T_{\mu'},t'')^F|\cdot\prod_{[a]}\left(C_{\mu[t,a],\mu'[t'',a]}\cdot C_{\mu^S[s,a],\mu'[t'',a]}\right).
\end{aligned}
\]
Hence, by (\ref{x}), we have
\begin{equation}\label{63c}
 \begin{aligned}
\langle R^{G}_{T,\chi},R_{S,\eta}^{H}\rangle _{H^F}
=&\sum_{\mbox{\tiny$\begin{array}{c}\mu'\subset\mu\\
\mu'\subset\mu^S\\
\end{array}$}}(-1)^{\mathrm{rk}(G_{\mu'})+\mathrm{rk}(H_{\mu'})+\mathrm{rk}(T)+\mathrm{rk}(S)} \\
&\times\sum_{f_{[t'']}\in P(\mu,\mu',t)\bigcap P(\mu^S,\mu',s)} |W_{\UU_{|\mu'|}}(T_{\mu'},t'')^F|\cdot\prod_{[a]}\left(C_{\mu[t,a],\mu'[t,a]}\cdot C_{\mu^S[s,a],\mu's,a]}\right).
\end{aligned}
\end{equation}
We aim to write the pairing $\langle R^{G}_{T,\chi},R_{S,\eta}^{H}\rangle _{H^F}$ as a product. So we need to move $\prod_{[a]}$ forward. The next step is to decompose the group $W_{\UU_{|\mu'|}}(T_{\mu'},t'')^F$ and the sign $(-1)^{\mathrm{rk}(G_{\mu'})+\mathrm{rk}(H_{\mu'})+\mathrm{rk}(T)+\mathrm{rk}(S)}$.

Consider the decomposition
\[
C_{\UU_{|\mu'|}}(t'')=\prod_{[a]}(\UU_{|\mu'|})_{[ a]}(t'').
\]
Recall that we have a natural embedding
\[
\begin{matrix}
T_{\mu'}&=&\prod_{[a]}T_{\mu'[t'',a]}&\hookrightarrow&\prod_{[a]}(\UU_{|\mu'|})_{[ a]}(t'')\\
\\
t''&=&\prod_{[a]}t''[a]&\hookrightarrow&\prod_{[a]}(\UU_{|\mu'|})_{[ a]}(t'').
\end{matrix}
\]
Then
\begin{equation}\label{w}
|W_{\UU_{|\mu'|}}(T_{\mu'},t'')^F|=\prod_{[a]}\left|W_{(\UU_{|\mu'|})_{[ a]}(t'')}(T_{\mu'[t'',a]})^F\right|.
\end{equation}

It is easy to check that
\begin{equation}\label{rkk1}
 (-1)^{\mathrm{rk}(T)+\mathrm{rk}(S)}=\prod_a (-1)^{\mathrm{rk}(T_{\mu[t,a]})+\mathrm{rk}(S_{\mu[s,a]})}
\end{equation}
and
\begin{equation}\label{rku}
(-1)^{\mathrm{rk}(G_{\mu'})+\mathrm{rk}(H_{\mu'})}=\left\{\begin{array}{ll}
-1, &\textrm{if } n-|\mu'| \textrm{ is odd};\\
1, & \textrm{otherwise.}
\end{array}\right.
\end{equation}
Since $n=\sum_{[a]}\nu_{[a]}(s)\cdot\#[a]$, we have
\begin{equation}\label{rkk2}
 (-1)^{\mathrm{rk}(G_{\mu'})+\mathrm{rk}(H_{\mu'})}=(-1)^{n-|\mu'|}=\prod_{[a]}(-1)^{\nu_{[a]}(s)\cdot\#[a]-|\mu'[s,a]|}.
\end{equation}
Inserting (\ref{w}), (\ref{rkk1}), and (\ref{rkk2}) into (\ref{63c}), we get
\[
 \begin{aligned}
\langle R^{G}_{T,\chi},R_{S,\eta}^{H}\rangle _{H^F}
=&\sum_{\mbox{\tiny$\begin{array}{c}\mu'\subset\mu\\
\mu'\subset\mu^S\\
\end{array}$}}\sum_{f_{[t'']}\in P(\mu,\mu',t)\bigcap P(\mu^S,\mu',s)}\\
&\prod_{[a]}(-1)^{\mathrm{rk}(T_{\mu[t,a]})+\mathrm{rk}(S_{\mu[s,a]})+\nu_{[a]}(s)\cdot\#[a]-|\mu'[s,a]|} C_{\mu[t,a],\mu'[t'',a]}\cdot C_{\mu^S[s,a],\mu'[t'',a]}\\
&\times\left|W_{(\UU_{|\mu'|})_{[ a]}(t'')}(T_{\mu'[t'',a]})^F\right|.
\end{aligned}
\]

It follows from the definition of $P(\mu,\mu',t)$ and $P(\mu^S,\mu',s)$ that $f_{[t'']}\in P(\mu,\mu',t)\bigcap P(\mu^S,\mu',s)$ if and only if
\begin{itemize}

\item $f_{[t'']}([a])\subset \mu[t,a]$;

\item  $\ f_{[t'']}([a])\subset\mu^S[s,a]$;

\item  $\bigcup_{[a]}f_{[t'']}([a])=\mu'$.
\end{itemize}
Then
\[
\sum_{\mbox{\tiny$\begin{array}{c}\mu'\subset\mu\\
\mu'\subset\mu^S\\
\end{array}$}}\sum_{f_{[t'']}\in P(\mu,\mu',t)\bigcap P(\mu^S,\mu',s)}\prod_{[a]}
=\prod_{[a]}\sum_{\mbox{\tiny$\begin{array}{c}\lambda[a]\subset\mu[t,a]\\
\lambda[a]\subset\mu^S[s,a]\\
\end{array}$}}
\sum_{f_{[t'']}\in P(\mu[t,a],\lambda[a],t[a])\bigcap P(\mu^S[s,a],\lambda[a],s[a])},
\]
and moreover, the sum $\sum_{f_{[t'']}\in P(\mu[t,a],\lambda[a],t[a])\bigcap P(\mu^S[s,a],\lambda[a],s[a])}$ has only one term, which is
\begin{equation}\label{tt}
f_{[t'']}([a'])=\left\{
\begin{array}{ll}
\lambda[a],&\textrm{ if }[a']=[a];\\
0,&\textrm{otherwise }.\\
\end{array}
\right.
\end{equation}
To summarize, we have follow result:
\begin{proposition}\label{7.4}
Keep notations as above. we have
\begin{equation}\label{xnn}
 \begin{aligned}
\langle R^{G}_{T,\chi},R_{S,\eta}^{H}\rangle _{H^F}
=&\prod_{[a]}\sum_{\mbox{\tiny$\begin{array}{c}\lambda[a]\subset\mu[t,a]\\
\lambda[a]\subset\mu^S[s,a]\\
\end{array}$}}(-1)^{\mathrm{rk}(T_{\mu[t,a]})+\mathrm{rk}(S_{\mu[s,a]})+\nu_{[a]}(s)\cdot\#[a]-|\lambda[a]|}\cdot Y_{\lambda[a]}
\end{aligned}
\end{equation}
where
\begin{equation}\label{xn1}
 \begin{aligned}
Y_{\lambda[a]}:=C_{\mu[t,a],\lambda[a]}\cdot C_{\mu^S[s,a],\lambda[a]}
\left|W_{(\UU_{|\lambda[a]|})_{[a]}(t'')}(T_{\lambda[a]})^F\right|
\end{aligned}
\end{equation}
with $t''\in T_{\lambda[a]}^F$ and $[t'']$ as in (\ref{tt}).
\end{proposition}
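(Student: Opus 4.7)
The plan is to collect the pieces already assembled in the running text preceding the statement. The starting point is equation (\ref{63c}), which writes $\langle R^G_{T,\chi}, R^H_{S,\eta}\rangle_{H^F}$ as a double sum indexed by partitions $\mu' \subset \mu$, $\mu' \subset \mu^S$ and by maps $f_{[t'']} \in P(\mu,\mu',t) \cap P(\mu^S,\mu',s)$. The goal is to transform this nested sum into the product over $[a]$ displayed in (\ref{xnn}). Three factorization identities make this possible: the decomposition of the Weyl group $|W_{\UU_{|\mu'|}}(T_{\mu'},t'')^F|$ as the product $\prod_{[a]} |W_{(\UU_{|\mu'|})_{[a]}(t'')}(T_{\mu'[t'',a]})^F|$ given in (\ref{w}), the torus rank identity (\ref{rkk1}), and the group rank identity (\ref{rkk2}).

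First I would substitute (\ref{w}), (\ref{rkk1}) and (\ref{rkk2}) into (\ref{63c}). Every factor in the resulting summand now carries an index $[a]$, and the coefficients $C_{\mu[t,a],\mu'[t'',a]}$, $C_{\mu^S[s,a],\mu'[t'',a]}$ also split over $[a]$ by construction. The next step is to reinterpret the indexing set. Given the bijection of Lemma \ref{2u}, the map $f_{[t'']}$ is determined by its values $\lambda[a]:=f_{[t'']}([a])$, and the intersection $P(\mu,\mu',t)\cap P(\mu^S,\mu',s)$ is cut out by the three conditions $\lambda[a]\subset\mu[t,a]$, $\lambda[a]\subset\mu^S[s,a]$, and $\bigcup_{[a]}\lambda[a]=\mu'$. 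Eliminating $\mu'$ via the third condition shows that the double sum $\sum_{\mu'}\sum_{f_{[t'']}}$ can be rewritten as $\prod_{[a]}\sum_{\lambda[a]}$ ranging over partitions satisfying $\lambda[a]\subset\mu[t,a]$ and $\lambda[a]\subset\mu^S[s,a]$.

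With this reindexing, the interchange of sum and product is immediate, and what remains on the inside, for each $[a]$, is the contribution of the unique map $f_{[t'']}$ whose value at $[a]$ is $\lambda[a]$ and which vanishes elsewhere — this is the map described in (\ref{tt}), and its contribution is precisely $Y_{\lambda[a]}$ up to the sign $(-1)^{\mathrm{rk}(T_{\mu[t,a]})+\mathrm{rk}(S_{\mu[s,a]})+\nu_{[a]}(s)\cdot\#[a]-|\lambda[a]|}$ coming from (\ref{rkk1})--(\ref{rkk2}). This yields (\ref{xnn}).

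The main obstacle is not any single computation but verifying that the interchange of the sum-over-$\mu'$ with the product-over-$[a]$ really is legitimate, i.e.\ that the constraints on $(\mu',f_{[t'']})$ factor cleanly along $[a]$. The delicate point is the compatibility condition $\bigcup_{[a]}\lambda[a]=\mu'$: one must argue that summing over $\mu'$ and then over compatible $f_{[t'']}$ is the same as summing independently over the pieces $\lambda[a]$ and recovering $\mu'$ a posteriori. This is where Lemma \ref{2u} is essential, since it guarantees that each combinatorial datum $\{\lambda[a]\}_{[a]}$ corresponds to a single $W_{\UU_{|\mu'|}}(T_{\mu'})^F$-orbit, eliminating any double counting.
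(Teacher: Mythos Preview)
Your proposal is correct and follows essentially the same route as the paper: start from (\ref{63c}), substitute the three factorization identities (\ref{w}), (\ref{rkk1}), (\ref{rkk2}), unpack the conditions defining $P(\mu,\mu',t)\cap P(\mu^S,\mu',s)$, and interchange $\sum_{\mu'}\sum_{f_{[t'']}}\prod_{[a]}$ with $\prod_{[a]}\sum_{\lambda[a]}$, noting that the residual inner sum collapses to the single term (\ref{tt}). Your identification of the interchange as the only nontrivial step, and your use of Lemma \ref{2u} to justify it, matches the paper's argument exactly.
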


\subsection{Decomposition of Reeder's formula}
We now consider the pairing $\langle R^{G'[a]}_{T'[a],\theta[a]\otimes1},R^{H'[a]}_{S'[a],1}\rangle _{H^{\prime }[a]^F}$. Suppose that $G'[a]=\GGL_m$ if $\#[a]$ is even and $G'[a]=\UU_m$ if $\#[a]$ is odd. In the same manner of the subsection \ref{sec6.1}, we have
\begin{equation}\label{631}
 \begin{aligned}
&\langle R^{G'[a]}_{T'[a],\theta[a]\otimes1},R^{H'[a]}_{S'[a],1}\rangle _{H^{\prime }[a]^F}\\
=&\sum_{\mbox{\tiny$\begin{array}{c}\frac{\mu'[a]}{\#[a]}\subset\frac{\mu[t,a]}{\#[a]}\\
\frac{\mu'[a]}{\#[a]}\subset\frac{\mu^S[s,a]}{\#[a]}\\
\end{array}$}}
(-1)^{\mathrm{rk}\left(G'[a]_{\frac{\mu'[a]}{\#[a]}}\right)+\mathrm{rk}\left(H'[a]_{\frac{\mu'[a]}{\#[a]}}\right)+\mathrm{rk}(T'[a])+\mathrm{rk}(S'[a])} Z_{\frac{\mu'[a]}{\#[a]}}
\end{aligned}
\end{equation}
where
\[
Z_{\frac{\mu'[a]}{\#[a]}}:=C_{\frac{\mu^S[s,a]}{\#[a]},\frac{\mu'[a]}{\#[a]}}\cdot C_{\frac{\mu[t,a]}{\#[a]},\frac{\mu'[a]}{\#[a]}}\times\left|W_{G'[a]_{\left|\frac{\mu'[a]}{\#[a]}\right|}}\left(T_{\frac{\mu'[a]}{\#[a]}}\right)^F\right|.
\]
 Recall that $S'[a]$ corresponds to $\frac{\mu[t,a]}{\#[a]}$ if $\nu_{[a]}(t)\ge \nu_{[a]}(s)$, and to $\frac{\mu^S[t,a]}{\#[a]}$ otherwise. However, in (\ref{631}), the roles of these two partitions are the same. So the different choices of partition corresponding to $S'[a]$ give the same equation.

\begin{lemma}\label{61}
\begin{equation}
\sum_{\mbox{\tiny$\begin{array}{c}\lambda[a]\subset\mu[t,a]\\
\lambda[a]\subset\mu^S[s,a]\\
\end{array}$}}Y_{\lambda[a]}=\sum_{\mbox{\tiny$\begin{array}{c}\frac{\mu'[a]}{\#[a]}\subset\frac{\mu[t,a]}{\#[a]}\\
\frac{\mu'[a]}{\#[a]}\subset\frac{\mu^S[s,a]}{\#[a]}\\
\end{array}$}}Z_{\frac{\mu'[a]}{\#[a]}}
\end{equation}
\end{lemma}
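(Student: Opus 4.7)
The plan is to establish the identity termwise by exhibiting a bijection between the two index sets and matching up the factors of $Y_{\lambda[a]}$ with those of $Z_{\frac{\mu'[a]}{\#[a]}}$.

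First, I would note that both sides are indexed by the same combinatorial object. Any $\lambda[a]\subset\mu[t,a]$ satisfies $\#[a]\mid\lambda[a]$ (since every part of $\mu[t,a]$ is divisible by $\#[a]$, as remarked in subsections \ref{sec3.1}, \ref{sec3.2}), so the map $\lambda[a]\mapsto\frac{\lambda[a]}{\#[a]}$ is a bijection between $\{\lambda[a]\ :\ \lambda[a]\subset\mu[t,a],\ \lambda[a]\subset\mu^S[s,a]\}$ and $\{\mu'[a]/\#[a]\ :\ \mu'[a]/\#[a]\subset\mu[t,a]/\#[a],\ \mu'[a]/\#[a]\subset\mu^S[s,a]/\#[a]\}$. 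Accordingly I set $\mu'[a]:=\lambda[a]$ throughout, and the remaining task is to prove $Y_{\lambda[a]}=Z_{\lambda[a]/\#[a]}$.

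Next I would handle the two binomial factors. By definition (\ref{CC}), $C_{\mu,\mu'}=\prod_i\binom{a_i}{b_i}$ depends only on the multiplicities of distinct parts. Writing $\mu[t,a]=[n_1^{a_1},\ldots,n_l^{a_l}]$ and $\lambda[a]=[n_1^{b_1},\ldots,n_l^{b_l}]$ with $\#[a]\mid n_i$, we get $\mu[t,a]/\#[a]=[(n_1/\#[a])^{a_1},\ldots]$ and $\lambda[a]/\#[a]=[(n_1/\#[a])^{b_1},\ldots]$ with identical exponent sequences, so $C_{\mu[t,a],\lambda[a]}=C_{\mu[t,a]/\#[a],\lambda[a]/\#[a]}$. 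The same argument yields $C_{\mu^S[s,a],\lambda[a]}=C_{\mu^S[s,a]/\#[a],\lambda[a]/\#[a]}$.

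The main step is to identify the two Weyl group factors: I claim
\[
\left|W_{(\UU_{|\lambda[a]|})_{[a]}(t'')}(T_{\lambda[a]})^F\right|=\left|W_{G'[a]_{|\lambda[a]/\#[a]|}}(T_{\lambda[a]/\#[a]})^F\right|.
\]
This is where the description of $C_{G(\overline{\bb{F}}_q)}(s)$ recalled in subsection \ref{sec2.2} and exploited in Lemma \ref{2u} does all the work. By construction, $t''\in T_{\lambda[a]}^F$ is the semisimple element attached via Lemma \ref{2u} to the function $f_{[t'']}$ of (\ref{tt}); thus every eigenvalue of $t''$ lies in $[a]$, and by AMR's decomposition $(\UU_{|\lambda[a]|})_{[a]}(t'')^F$ is exactly $\GGL_{|\lambda[a]|/\#[a]}(\bb{F}_{q^{\#[a]}})$ when $\#[a]$ is even and $\UU_{|\lambda[a]|/\#[a]}(\bb{F}_{q^{\#[a]}})$ when $\#[a]$ is odd, i.e. precisely $G'[a]_{|\lambda[a]/\#[a]|}$. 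Moreover, under this identification the torus $T_{\lambda[a]}\subset\UU_{|\lambda[a]|}$ becomes the torus $T_{\lambda[a]/\#[a]}$ in $G'[a]_{|\lambda[a]/\#[a]|}$: indeed $T_{\lambda[a]}$ is built from the factors $T_{\lambda[a]_i}$ whose $F$-rational points are $\GGL_1(\bb{F}_{q^{\lambda[a]_i}})$ (or the analogous unitary factors), which under the isomorphism with the base-changed classical group become rank-one factors corresponding to the parts $\lambda[a]_i/\#[a]$. Combining these three identifications gives $Y_{\lambda[a]}=Z_{\lambda[a]/\#[a]}$ and the lemma follows by summing.

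The mild subtlety (not a real obstacle, but worth articulating carefully) is the consistency of the torus identification in the mixed even-odd case: one must check that the $F$-structures on $(\UU_{|\lambda[a]|})_{[a]}(t'')$ and on $G'[a]_{|\lambda[a]/\#[a]|}$ induce the same Weyl-group $F$-action on the respective tori, so that the orders of the $F$-fixed subgroups agree. This is exactly what the construction of $t_{0,f([a])}$ in Lemmas \ref{2gl} and \ref{2u} was set up to guarantee, so one can simply cite those constructions.
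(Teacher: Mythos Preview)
Your proposal is correct and follows essentially the same approach as the paper: both proofs establish the identity termwise via the bijection $\lambda[a]\mapsto\lambda[a]/\#[a]$ between the index sets, the invariance of the binomial coefficients $C_{\cdot,\cdot}$ under division by $\#[a]$, and the identification $(\UU_{|\lambda[a]|})_{[a]}(t'')^F\cong G'[a]_{|\lambda[a]/\#[a]|}^F$ (general linear or unitary according to the parity of $\#[a]$) carrying $T_{\lambda[a]}$ to $T_{\lambda[a]/\#[a]}$ and hence matching the Weyl group factors. The paper records these three checks as terse bullet points, while you have spelled them out with a bit more justification; the content is the same.
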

\begin{proof}
It is easy to check that
\begin{itemize}

\item There is a isomorphism
\[
C_{\UU_{|\lambda[a]|}}(t''[a])^F\cong G'[a]_{\left|\frac{\lambda[a]}{\#[a]}\right|}^F
\]
such that
\[
W_{(\UU_{|\lambda[a]|})_{[a]}(t'')}(T_{\lambda[a]})^F\cong W_{G'[a]_{\left|\frac{\lambda[a]}{\#[a]}\right|}}(T_{\frac{\lambda[a]}{\#[a]}})^F.
\]

\item  $C_{\mu^S[s,a],\mu'[a]}=C_{\frac{\mu^S[s,a]}{\#[a]},\frac{\mu'[a]}{\#[a]}}$ and $C_{\mu[t,a],\mu'[a]}=C_{\frac{\mu[t,a]}{\#[a]},\frac{\mu'[a]}{\#[a]}}$;

\item There is a bijection
\[
\left\{\frac{\mu'[a]}{\#[a]}\textrm{ such that }\frac{\mu'[a]}{\#[a]}\subset\frac{\mu[t,a]}{\#[a]},\ \frac{\mu'[a]}{\#[a]}\subset\frac{\mu^S[s,a]}{\#[a]}\right\} \to \left\{\lambda[a]\textrm{ such that }\lambda[a]\subset\mu[t,a],\ \lambda[a]\subset\mu^S[s,a]\right\}
\]
by sending $\frac{\mu'[a]}{\#[a]}$ to $\lambda[a]$.
\end{itemize}
\end{proof}
We now turn to compare the ranks in (\ref{xnn}) and in (\ref{631}). We set
 \begin{equation}\label{ua}
\epsilon_{[a]}:=(-1)^{\rm{rk}(T''[a])}=(-1)^{\#[a]+1},
\end{equation}
and by (\ref{ta}), one has
\begin{equation}\label{urk1}
(-1)^{\mathrm{rk}(T'[a])+\mathrm{rk}(S'[a])} =\epsilon_{[a]}\cdot(-1)^{\mathrm{rk}(T_{\mu[t,a]})+\mathrm{rk}(S_{\mu[s,a]})}.
\end{equation}
Here we emphasize that the LHS is $\bb{F}_q$-rank while the RHS is $\bb{F}_{q^{\#[a]}}$-rank.

If $H'[a]$ is a general linear group, then $\#[a]$ is even,
and
\[
(-1)^{\mathrm{rk}\left(G'[a]_{\frac{\mu'[a]}{\#[a]}}\right)+\mathrm{rk}\left(H'[a]_{\frac{\mu'[a]}{\#[a]}}\right)}=-1.
\]
Since $\#{a}$ is even, $|\lambda[a]|$ is also even. Hence
\begin{equation}\label{evenu}
(-1)^{\nu_{[a]}(s)\cdot\#[a]-|\lambda[a]|}=1=(-1)^{\mathrm{rk}\left(G'[a]_{\frac{\mu'[a]}{\#[a]}}\right)+\mathrm{rk}\left(H'[a]_{\frac{\mu'[a]}{\#[a]}}\right)+1}.
\end{equation}
If $H'[a]$ is not a general linear group, then $H'[a]$ is of the form as follows:
\[
H'[a]=
\left\{
 \begin{array}{ll}
 \UU_{\nu_{[a]}(t)\cdot\#[a]}&\textrm{ if }\nu_{[a]}(t)\ge \nu_{[a]}(s);\\
  \UU_{\nu_{[a]}(s)\cdot\#[a]}&\textrm{ otherwise},
\end{array}
\right.
\]
and by (\ref{rkuu}), we have
\[
(-1)^{\mathrm{rk}\left(G'[a]_{\frac{\mu'[a]}{\#[a]}}\right)+\mathrm{rk}\left(H'[a]_{\frac{\mu'[a]}{\#[a]}}\right)}=
\left\{
  \begin{array}{ll}
 (-1)^{\nu_{[a]}(t)\cdot\#[a]-|\mu'[a]|}&\textrm{ if }\nu_{[a]}(t)\ge \nu_{[a]}(s);\\
 (-1)^{\nu_{[a]}(s)\cdot\#[a]-|\mu'[a]|}&\textrm{ otherwise}.
\end{array}
\right.
\]
Then
\begin{equation}\label{oddu}
 \begin{aligned}
&(-1)^{\mathrm{rk}\left(G'[a]_{\frac{\mu'[a]}{\#[a]}}\right)+\mathrm{rk}\left(H'[a]_{\frac{\mu'[a]}{\#[a]}}\right)}\\
=&\left\{
  \begin{array}{ll}
 (-1)^{\nu_{[a]}(s)\cdot\#[a]+1-|\mu'[a]|}&\textrm{ if }\nu_{[a]}(t)\ge \nu_{[a]}(s)\textrm{ and }\nu_{[a]}(t)\equiv \nu_{[a]}(s)+1\ (\textrm{mod }2);\\
 (-1)^{\nu_{[a]}(s)\cdot\#[a]-|\mu'[a]|}&\textrm{ otherwise}.
\end{array}
\right.
 \end{aligned}
\end{equation}
Let $A_{t,s}=\#\{[a]:\#[a]\textrm{ is even}\}$ and $B_{t,s}=\#\{[a]:\nu_{[a]}(t)\ge \nu_{[a]}(s)\textrm{ and }\nu_{[a]}(t)\equiv \nu_{[a]}(s)+1\}$, and
\begin{equation}\label{uts}
\epsilon_{t,s}:=(-1)^{A_{t,s}+B_{t,s}}.
\end{equation}
Then by (\ref{evenu}) and (\ref{oddu}), whether $H'[a]$ is a general linear group or not, we have
\begin{equation}\label{62}
\epsilon_{t,s}\cdot(-1)^{\nu_{[a]}(s)\cdot\#[a]-|\lambda[a]|}=(-1)^{\mathrm{rk}\left(G'[a]_{\frac{\mu'[a]}{\#[a]}}\right)+\mathrm{rk}\left(H'[a]_{\frac{\mu'[a]}{\#[a]}}\right)}
\end{equation}

\begin{proposition}\label{nou}
 Keep the notations and assumptions as above. Then
\begin{equation}
\epsilon_{t,s}\langle R^{G}_{T,\chi},R_{S,\eta}^{H}\rangle _{H^F}=\prod_{[a]}\epsilon_{[a]}\langle R^{G'[a]}_{T'[a],\theta[a]\otimes1},R^{H'[a]}_{S'[a],1}\rangle _{H^{\prime }[a]^F}
\end{equation}
where the product runs over $[a]$ such that $[a]\in \chi$ and $[a]\in \eta$, and it is a finite product.
\end{proposition}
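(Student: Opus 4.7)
The plan is to assemble the proof by concatenating the three ingredients already prepared: Proposition \ref{7.4} (which rewrites the left-hand pairing as a product over $[a]$), Lemma \ref{61} (which matches the combinatorial sums factor-by-factor with those on the right-hand side), and the sign identities (\ref{urk1}), (\ref{evenu}), (\ref{oddu}) (which account for all the discrepancies between $\bb{F}_q$-ranks and $\bb{F}_{q^{\#[a]}}$-ranks). So the proof is essentially a bookkeeping exercise; no new geometric input is needed.

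First, I would start from equation (\ref{xnn}) in Proposition \ref{7.4}, which already has the product-over-$[a]$ structure:
\[
\langle R^{G}_{T,\chi},R_{S,\eta}^{H}\rangle _{H^F}
=\prod_{[a]}\sum_{\lambda[a]\subset\mu[t,a]\cap\mu^S[s,a]}
(-1)^{\mathrm{rk}(T_{\mu[t,a]})+\mathrm{rk}(S_{\mu[s,a]})+\nu_{[a]}(s)\cdot\#[a]-|\lambda[a]|}\,Y_{\lambda[a]}.
\]
Next, I would turn to the right-hand pairings. Each factor $\langle R^{G'[a]}_{T'[a],\theta[a]\otimes 1},R^{H'[a]}_{S'[a],1}\rangle_{H'[a]^F}$ is of exactly the type treated in subsection \ref{sec6.1} (a character $1$ on one side, and a regular character of the form $\theta[a]\otimes 1$ with $1\notin\theta[a]$ on the other), so the same computation that led to (\ref{xmp}) and (\ref{x2}) applies inside $G'[a]$, yielding the expansion (\ref{631}) in terms of $Z_{\mu'[a]/\#[a]}$.

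The core identification is then Lemma \ref{61}, which establishes a bijection $\lambda[a]\leftrightarrow \mu'[a]/\#[a]$ under which $Y_{\lambda[a]}=Z_{\mu'[a]/\#[a]}$. This collapses the two combinatorial sums factor-by-factor, leaving only the signs to reconcile. For the rank contributions from $T'[a]$ and $S'[a]$ themselves, I would invoke (\ref{urk1}): the construction of $T'[a]$ in (\ref{ta}) introduces the auxiliary torus $T''[a]$ of rank $\#[a]-1$ modulo $2$, which is precisely the source of $\epsilon_{[a]}=(-1)^{\#[a]+1}$. For the remaining sign $(-1)^{\nu_{[a]}(s)\cdot\#[a]-|\lambda[a]|}$ coming from (\ref{rku})--(\ref{rkk2}), I would apply (\ref{evenu}) in the case $\#[a]$ even (so $H'[a]$ is a general linear group) and (\ref{oddu}) in the case $\#[a]$ odd (so $H'[a]$ is a unitary group); aggregated over all $[a]$, these local discrepancies produce exactly $\epsilon_{t,s}=(-1)^{A_{t,s}+B_{t,s}}$ as defined in (\ref{uts}).

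The main obstacle is keeping the sign bookkeeping straight, because three different kinds of sign interact: the $\epsilon_{[a]}$ from the auxiliary torus $T''[a]$, the unitary-group parity sign $(-1)^{\mathrm{rk}(G'[a])+\mathrm{rk}(H'[a])}$ (which behaves differently in the $\#[a]$ even and odd cases because $G'[a], H'[a]$ are then $\GGL$ vs.\ $\UU$), and the original $(-1)^{\mathrm{rk}(T)+\mathrm{rk}(S)}$ factor that must split multiplicatively through (\ref{rkk1}). Once I verify that for \emph{each} index $[a]$ the combination of (\ref{evenu}) and (\ref{oddu}) contributes the claimed local sign, taking the product over $[a]$ reproduces exactly $\epsilon_{t,s}\prod_{[a]}\epsilon_{[a]}$ on one side, giving the stated identity. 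Finally, I would remark that the product is finite because $Y_{\lambda[a]}=Z_{\mu'[a]/\#[a]}$ vanishes (equivalently, the sum is empty) whenever $[a]\notin\chi$ or $[a]\notin\eta$, which matches the range stated in the proposition.
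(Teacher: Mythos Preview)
Your proposal is correct and follows essentially the same route as the paper: the paper's own proof is the one-line statement that the result follows from Proposition~\ref{7.4}, Lemma~\ref{61}, (\ref{urk1}), and (\ref{62}), and you have simply spelled out how these pieces fit together. One minor inaccuracy: your closing remark that the factor for $[a]$ vanishes whenever $[a]\notin\chi$ or $[a]\notin\eta$ is not quite right---if $[a]$ appears in exactly one of $t,s$ the sum over $\lambda[a]$ still has the single term $\lambda[a]=\emptyset$ with $Y_\emptyset=1$---but finiteness is clear anyway since only finitely many $[a]$ occur among the eigenvalues of $t$ and $s$.
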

\begin{proof}
It follows from Proposition \ref{7.4}, Lemma \ref{61}, (\ref{urk1}), and (\ref{62}).
\end{proof}

\subsection{Remark on the general linear groups case}\label{sec6.3}

Proposition \ref{nou} is also true for the general linear groups. The proof for the general linear groups is similar and will be left to the reader.

Now we prove a lemma that will be used in a future proof. Let $\pi$ and $\sigma$ be two irreducible unipotent representations of $\GGL_n\fq$ and $\GGL_m\fq$. Assume that $n\ge m$. Suppose that
\[
  \pi=\sum_{(T^*,1)\ \textrm{mod }\GGL_{n}\fq}  C_{T^*}R^{\GGL_{n}}_{T,1}
  \]
  and
 \[
  \sigma=\sum_{(T^{\prime *},1)\ \textrm{mod }\GGL_{m}\fq}  C_{T^{\prime *}}R^{\GGL_{m}}_{T',1}
  \]
  where the sums run over the geometric conjugacy classes of pair $(T^*,1)$ and $(T^{\prime *},1)$.

 Let $s_1,s_2$ be two regula semisimple elements of $\GGL_{n+1-m}\fq$ with $1\notin s_i$, and $\tau_i=\epsilon_i R^{\GGL_{n+1-m}}_{T_i^*,s_i}$ with $\epsilon_i\in\{\pm 1\}$ be an irreducible representation. Then we have
 \[
 I^{\GGL_{n+1}}_{P}(\tau_i\otimes \sigma)=\epsilon_i\sum_{(T^{\prime *}\times T_i^*,(1,s_i))\ \textrm{mod }\GGL_{n+1}\fq}  C_{T'}R^{\GGL_{n+1}}_{T^{\prime *}\times T_i^*,(1,s_i)}
 \]
where $P$ is a an $F$-stable maximal parabolic subgroup with levi factor $\GGL_{m}\times\GGL_{n+1-m}$. Thus
\[
\langle\pi,  I^{\GGL_{n+1}}_{P}(\tau_i\otimes \sigma)\rangle_{\GGL_{n}\fq}= \epsilon_i\sum_{(T^*,1)}   \sum_{(T^{\prime *}\times T_i^*,(1,s_i))} C_{T^*}\cdot C_{T^{\prime *}} \langle R^{\GGL_{n}}_{T^*,1},R^{\GGL_{n+1}}_{T^{\prime *}\times T_i^*,(1,s_i)}\rangle
\]
where the sums run as before. Since (\ref{x}) is still true for general linear groups, we can conclude that the pairing $\langle R^{\GGL_{n}}_{T^*,1},R^{\GGL_{n+1}}_{T^{\prime *}\times T_i^*,(1,s_i)}\rangle$ does not depend on $s_i$.

\begin{lemma}\label{gl}
Let $s$ be a regula semisimple elements of $\GGL_{n+1-m}\fq$ with $1\notin s$, and $\tau=\epsilon R^{\GGL_{n+1-m}}_{T^*,s}$  with $\epsilon\in\{\pm 1\}$ be a irreducible representation. Then the pairing $\langle\pi,  I^{\GGL_{n+1}}_{P}(\tau\otimes \sigma)\rangle_{\GGL_{n}\fq}$ does not depend on $s$. Moreover, the multiplicity $m(\pi,\sigma)$ in (\ref{defgl}) is well defined.
\end{lemma}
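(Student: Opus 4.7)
The plan is to reduce the lemma to the claim that for any two pairs $(T^*, 1)$ and $(T^{\prime *}, 1)$ indexing summands of $\pi$ and $\sigma$, and for any regular semisimple $s \in \GGL_{n+1-m}\fq$ with $1 \notin s$ contained in a maximal torus $T_s^*$, the pairing $\langle R^{\GGL_n}_{T^*, 1}, R^{\GGL_{n+1}}_{T^{\prime *} \times T_s^*, (1, s)}\rangle$ is the same for all such $s$. Granting this claim, the lemma follows at once: expanding $\pi$ and $\sigma$ into their Deligne-Lusztig decompositions and rewriting $I^{\GGL_{n+1}}_{P}(\tau\otimes\sigma) = \epsilon \sum_{T^{\prime *}, T_s^*} C_{T^{\prime *}} R^{\GGL_{n+1}}_{T^{\prime *}\times T_s^*,(1,s)}$ (by Proposition \ref{3.1} and transitivity of Deligne-Lusztig induction) reduces $\langle \pi, I^{\GGL_{n+1}}_P(\tau\otimes\sigma)\rangle_{\GGL_n\fq}$ to a weighted sum of inner pairings whose weights $\epsilon\, C_{T^*} C_{T^{\prime *}}$ are intrinsic to $\pi$, $\sigma$, $\tau$ and do not depend on the choice of $s$ defining $\tau$.

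For the claim itself I would apply the general linear groups analogue of formula (\ref{x2}) to each inner pairing, taking $(T_1 \times T_2, \theta \otimes \chi')$ to be $(T_s^* \times T^{\prime *}, s \otimes 1)$ and $(S, \eta)$ to be $(T^*, 1)$. Since $1 \notin s$, the hypothesis $[a] \notin \theta$ (with $[a] = 1$) holds, and the analogue of (\ref{x2}) expresses the pairing as $\sum_{\mu' \subset \mu^{T^{\prime *}},\, \mu' \subset \mu^{T^*}} C_{\mu^{T^*}, \mu'} \cdot X_{\mu'}$. The explicit computation of $X_{\mu'}$ carried out through (\ref{xmp}) shows that $X_{\mu'}$ depends only on the partitions $\mu^{T^*}$, $\mu^{T^{\prime *}}$, $\mu'$, on a Weyl group order of the form $|W_{\GGL_{|\mu'|}}(T_{\mu'})^F|$ (which is $|W_{\GGL_{|\mu'|}}(T_{\mu'}, t'')^F|$ with $t''$ trivial in the relevant factor), and on a global sign; none of these depend on the specific value of $s$. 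Summing over $T^*$, $T^{\prime *}$, $T_s^*$ then yields a number independent of $s$, so the first assertion follows. The well-definedness of $m(\pi,\sigma)$ in (\ref{defgl}) is then immediate, since any two admissible $\tau, \tau'$ (which both exist under our standing assumption on $q$) yield the same pairing.

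The main obstacle is confirming that the derivation of (\ref{x2}) and the formula for $X_{\mu'}$ in subsection \ref{sec6.1} transcribes verbatim to the general linear groups setting, as asserted at the start of subsection \ref{sec6.3}. This is largely bookkeeping: the parametrization of $[\iota]$ by pairs $(m, \mu^\iota)$, the centralizer decomposition (\ref{g1}), and the combinatorial formulas of Section \ref{sec3} for $\#M(\mu, \mu', t, t'')$ and $\#M(\mu^S, \mu', s, t'')$ all admit direct $\GGL$-analogues via Lemma \ref{2gl} and (\ref{2glm}) in place of Lemma \ref{2u} and (\ref{um}). With these in hand, the independence of the pairing from $s$ is automatic and the lemma is proved.
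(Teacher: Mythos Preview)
Your proposal is correct and follows essentially the same approach as the paper: expand $\pi$, $\sigma$, and $I^{\GGL_{n+1}}_{P}(\tau\otimes\sigma)$ into Deligne--Lusztig characters and argue that each inner pairing $\langle R^{\GGL_{n}}_{T^*,1},R^{\GGL_{n+1}}_{T^{\prime *}\times T_s^*,(1,s)}\rangle$ is independent of $s$ via the general linear analogue of Reeder's reduction. You are in fact more explicit than the paper (which simply cites (\ref{x}) and leaves the $\GGL$ transcription to the reader), since you point to (\ref{x2}), (\ref{xmp}), Lemma~\ref{2gl}, and (\ref{2glm}) as the relevant ingredients; one small wrinkle worth tightening is that the global sign in $X_{\mu'}$ carries a factor $(-1)^{\rm{rk}(T_s^*)}$, which does vary with the torus containing $s$, but this is exactly cancelled by the $\epsilon$ in $\tau=\epsilon R^{\GGL_{n+1-m}}_{T_s^*,s}$, so the total pairing is indeed constant.
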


\section{Special orthogonal group}\label{sec7}
Let $V$ be a $2n+1$ dimensional space over $\Fq$ with a nondegenerate symmetric bilinear form $(,)$. Fix $v\in V$ with $(v,v)\ne 0$, and $U$ be the orthogonal space of $v$ in $V$. We take $G=\so(V)$ and $H=\so(U)$.
Then $G\cong\so_{2n+1}$ and $H\cong\so_{2n}^\epsilon$.

Let $T$ be an $F$-stable maximal torus of $G$, and $S$ be an $F$-stable maximal torus of $H$. Let $\chi$ (resp. $\eta$) be a character of $T^F$ (resp. $S^F$) with $\pm 1\notin \chi$ and $\pm 1\notin \eta$. Let $t\in T^*$ (resp. $s\in S^*$) be a semisimple element such that $(T,\chi)$ (resp. $(S,\eta)$) corresponds to $(T^*,t)$ (resp. $(S^*,s)$).
We now calculate the pairing $\langle R^G_{T,\chi},R^H_{S,\eta}\rangle _{H^F}$. Suppose that $T$ corresponds to the bipartition $(\mu,\lambda)$ and $S$ corresponds to the bipartition $(\mu^S,\lambda^S)$.
Recall that
\[
\langle R^{G}_{T,\chi},R_{S,\eta}^{H}\rangle _{H^F}=
\sum_{\mbox{\tiny$\begin{array}{c}\iota\in I(S)^F,
\delta_\iota=0\\j_{G_\iota}^{-1}(\mathrm{cl}(T,G))\ne \emptyset
\end{array}$}}X_\iota=\sum_{\mbox{\tiny$\begin{array}{c}[\iota]\in [I(S)^F],
\delta_\iota=0\\j_{G_\iota}^{-1}(\mathrm{cl}(T,G))\ne \emptyset
\end{array}$}} \#[\iota]\cdot X_\iota.
\]

In \cite[section 9]{R}, we know that the set
\[
\{[\iota]\in[I(S)^F]:\delta_{\iota}=0\textrm{ and }j_{G_\iota}^{-1}(\mathrm{cl}(T,G))\ne \emptyset\}
 \]
 is parameterized by triples $(m,\mu^\iota,\lambda^\iota)$, where $\mu^\iota$ and $\lambda^\iota$ are two partitions such that $|\mu^\iota|+|\lambda^\iota|=n-m$. For each $\iota$, we have
 \[
G_\iota\cong T'\times \so_{2m+1}\textrm{ and }H_\iota\cong T'\times \so^\epsilon_{2m}
\]
 where $T'$ is an $F$-stable maximal torus of $\so_{2(n-m)+1}$ corresponding to $(\mu^\iota,\lambda^\iota)$. We denote $\iota$ by $\iota_{\mu',\lambda'}$ if $\iota$ corresponds to the the triple $(m, \mu',\lambda')$. For simplicity notations, we write $G_{\mu',\lambda'}$, $H_{\mu',\lambda'}$ $X_{\mu',\lambda'}$ instead of $G_{\iota_{\mu',\lambda'}}$, $H_{\iota_{\mu',\lambda'}}$ and $X_{\iota_{\mu',\lambda'}}$.

 In \cite[p.594]{R}, we know that
\begin{equation}\label{xso}
 \begin{aligned}
\langle R^{G}_{T,\chi},R_{S,\eta}^{H}\rangle _{H^F}=\sum_{\mbox{\tiny$\begin{array}{c}[\iota]\in [I(S)^F],
\delta_\iota=0\\j_{G_\iota}^{-1}(\mathrm{cl}(T,G))\ne \emptyset
\end{array}$}} \#[\iota]\cdot X_\iota
=\sum_{\mbox{\tiny$\begin{array}{c}\mu'\subset\mu,\ \mu'\subset\mu^S\\
\lambda'\subset\lambda,\ \lambda'\subset\lambda^S\\
\end{array}$}}C_{\mu^S,\mu'}\cdot C_{\lambda^S,\lambda'}\cdot X_{\mu',\lambda'}.
\end{aligned}
 \end{equation}

\subsection{Decompositions of $G$ and $H$}\label{sec7.1}

In section \ref{sec2.2}, we have the natural decomposition as follows:
\[
C_{G^*(\overline{\bb{F}}_q)}(t)=\prod_{[ a]}G^*_{[ a]}(t)(\overline{\bb{F}}_q)
\]
and
\[
C_{H^*(\overline{\bb{F}}_q)}(s)=\prod_{[ a]}H^*_{[ a]}(s)(\overline{\bb{F}}_q).
\]
To be a little more precise, $G^*_{[ a]}(t)(\overline{\bb{F}}_q)$ and $H^*_{[ a]}(s)(\overline{\bb{F}}_q)$ are general linear groups if $a^{-1}\notin[a]$ and are unitary groups otherwise. Since we only care about the pairing $\langle R^{G}_{T,\chi},R_{S,\eta}^{H}\rangle _{H^F}$ with $\pm 1\notin \chi$ and $\pm 1\notin \eta$, from now on, we assume that $G^*_{[  1]}(t)=H^*_{[ 1]}(s)=G^*_{[  -1]}(t)=H^*_{[ -1]}(s)=1$.
By our discussion in section \ref{sec3}, we have the decomposition of torus as follows:
\[
T^*=\left(\prod_{[a]}T^*_{\mu[t,a]}\right)\times\left(\prod_{[a]}T^*_{\lambda[t,a]}\right) \hookrightarrow \prod_{[ a]}G^*_{[ a]}(t)
\]
and
\[
S^*=\left(\prod_{[a]}S^*_{\mu^S[s,a]}\right)\times\left(\prod_{[a]}S^*_{\lambda^S[s,a]}\right)\hookrightarrow \prod_{[ a]}H^*_{[ a]}(t)
\]
where $\bigcup_{[a]}\mu[t,a]=\mu$, $\bigcup_{[a]}\lambda[t,a]=\lambda$, $\bigcup_{[a]}\mu^S[s,a]=\mu^S$, and $\bigcup_{[a]}\lambda^S[s,a]=\lambda^S$.

We shall associate to an index $[a]$ the pair of algebra groups $(G'[a],H'[a])$ and the pair of torus $(S'[a],T'[a])$ as follows.
We set
\[
H'[a]:=
\left\{
 \begin{array}{ll}
 G^*_{[ a]}(t)&\textrm{ if }\nu_{[a]}(t)\ge \nu_{[a]}(s);\\
  H^*_{[ a]}(s)&\textrm{ otherwise};
\end{array}
\right.
\]
and
\[
S'[a]:=
\left\{
  \begin{array}{ll}
 T^*_{\mu[t,a]}\times T^*_{\lambda[t,a]}&\textrm{ if }\nu_{[a]}(t)\ge \nu_{[a]}(s);\\
  S^*_{\mu^S[s,a]}\times S^*_{\lambda^S[s,a]}&\textrm{ otherwise}.
\end{array}
\right.
\]
Clearly, $S'[a]$ is an $F$-stable maximal torus of $H'[a]$, and $H'[a]=\GGL_{n[a]}$ if $a^{-1}\notin [a]$, and $H'[a]=\UU_{n[a]}$ otherwise. Take a algebra group $G'[a]$ defined over $\bb{F}_{q^{\#[a]}}$ as follows:
\[
G'[a]:=
\left\{
  \begin{array}{ll}
 \GGL_{n[a]+1}&\textrm{ if $a^{-1}\notin [a]$};\\
  \UU_{n[a]+1}&\textrm{ otherwise}.
\end{array}
\right.
\]
Then
\[
(G'[a])^F:=
\left\{
  \begin{array}{ll}
 \GGL_{n[a]+1}(\bb{F}_{q^{\#[a]}})&\textrm{ if $a^{-1}\notin [a]$};\\
  \UU_{n[a]+1}(\bb{F}_{q^{\#[a]}})&\textrm{ otherwise}
\end{array}
\textrm{ and }
\right.(H'[a])^F:=
\left\{
  \begin{array}{ll}
 \GGL_{n[a]}(\bb{F}_{q^{\#[a]}})&\textrm{ if $a^{-1}\notin [a]$};\\
  \UU_{n[a]}(\bb{F}_{q^{\#[a]}})&\textrm{ otherwise}.
\end{array}
\right.
\]
We have the embedding as follows:
\[
\left\{
 \begin{array}{ll}
 \left(S^*_{\mu^S[s,a]}\times S^*_{\lambda^S[s,a]}\right)\times I \hookrightarrow H^*_{[ a]}(t)\times I\hookrightarrow G'[a]&\textrm{ if }\nu_{[a]}(t)\ge \nu_{[a]}(s);\\
  \left(T^*_{\mu[t,a]}\times T^*_{\lambda[t,a]}\right)\times I \hookrightarrow  G^*_{[ a]}(s)\times I \hookrightarrow G'[a]&\textrm{ otherwise}.
\end{array}
\right.
\]
Let
\begin{equation}\label{taso}
T'[a]:=\left\{
 \begin{array}{ll}
  \left(S^*_{\mu^S[s,a]}\times S^*_{\lambda^S[s,a]}\right)\times T''[a] &\textrm{ if }\nu_{[a]}(t)\ge \nu_{[a]}(s);\\
  \left(T^*_{\mu[t,a]}\times T^*_{\lambda[t,a]}\right)\times T''[a] &\textrm{ otherwise}.
\end{array}
\right.
\end{equation}
be an $F$-stable maximal torus of $G'[a]$ with $(-1)^{\rm{rk}(T''[a])}=(-1)^{\#[a]+1}$. As in subsection \ref{sec6.2}, the choices of $T''[a]$ and its rank do not matter. Here is a convenient choice for our purpose. Note that now $S'[a]$ and $T'[a]$ are two $F$-stable maximal torus of general linear groups or unitary groups, so they correspond to partitions instead of bipartitions.
Suppose that $T''[a]$ corresponds to $\mu''$. We describe these partitions as follows.
If $G'[a]=\GGL_{n[a]+1}$, then
\[
\left\{
 \begin{array}{ll}
 \lambda^S[s,a]=0&\textrm{ if }\nu_{[a]}(t)\ge \nu_{[a]}(s);\\
  \lambda[t,a]=0&\textrm{ otherwise},
\end{array}
\right.
\]
and $T'[a]$ corresponds to the partition
\[
\left\{
 \begin{array}{ll}
 \frac{\mu^S[s,a]}{\#[a]}\bigcup\mu''&\textrm{ if }\nu_{[a]}(t)\ge \nu_{[a]}(s);\\
  \frac{\mu[t,a]}{\#[a]}\bigcup\mu''&\textrm{ otherwise}.
\end{array}
\right.
\]
In this case, $H'[a]=\GGL_{n[a]}$, and
$S'[a]$ corresponds to the partition
\[
\left\{
 \begin{array}{ll}
 \frac{\mu[t,a]}{\#[a]}&\textrm{ if }\nu_{[a]}(t)\ge \nu_{[a]}(s);\\
  \frac{\mu^S[s,a]}{\#[a]}&\textrm{ otherwise}.
\end{array}
\right.
\]
If $G'[a]=\UU_{n[a]+1}$, then $2|\#[a]$, $\frac{\#[a]}{2}\Big|\lambda[t,a]$ and $\frac{\#[a]}{2}\Big|\lambda^S[s,a]$.
In this case, $T'[a]$ corresponds to the partition
\[
\left\{
 \begin{array}{ll}
 \frac{\mu^S[s,a]}{\#[a]}\bigcup\frac{\lambda^S[s,a]}{\frac{1}{2}\#[a]}\bigcup\mu''&\textrm{ if }\nu_{[a]}(t)\ge \nu_{[a]}(s);\\
  \frac{\mu[t,a]}{\#[a]}\bigcup\frac{\lambda[t,a]}{\frac{1}{2}\#[a]}\bigcup\mu''&\textrm{ otherwise}.
\end{array}
\right.
\]
and
$S'[a]$ corresponds to the partition
\[
\left\{
 \begin{array}{ll}
 \frac{\mu[t,a]}{\#[a]}\bigcup\frac{\lambda[t,a]}{\frac{1}{2}\#[a]}&\textrm{ if }\nu_{[a]}(t)\ge \nu_{[a]}(s);\\
 \frac{\mu^S[s,a]}{\#[a]}\bigcup\frac{\lambda^S[s,a]}{\frac{1}{2}\#[a]}&\textrm{ otherwise}.
\end{array}
\right.
\]
It is worth to point out that
\[
\left\{
 \begin{array}{ll}
 \textrm{$b$ is odd} &\textrm{ if $b\in\frac{\lambda[t,a]}{\frac{1}{2}\#[a]}$ or $b\in\frac{\lambda^S[s,a]}{\frac{1}{2}\#[a]}$};\\
   \textrm{$b$ is even} &\textrm{ if $b\in\frac{\mu[t,a]}{\frac{1}{2}\#[a]}$ or $b\in\frac{\mu^S[s,a]}{\frac{1}{2}\#[a]}$}.
\end{array}
\right.
\]

\subsection{Calculation of $\langle R^G_{T,\chi},R^H_{S,\eta}\rangle _{H^F}$}

As in subsection \ref{secu2}, we will prove that
$\langle R^G_{T,\chi},R^H_{S,\eta}\rangle _{H^F}$ is a product of some
$\langle R^{G'[a]}_{T'[a],\theta[a]\otimes 1},R^{H'[a]}_{S'[a],1}\rangle _{H^{\prime }[a]^F}$. The proof is similar to the unitary group case. Here, the choices of these characters $\theta[a]$ do not matter, and we only assume that they are regular characters satisfying $\pm1\notin\theta[a]$.

By (\ref{xx}), (\ref{xso}) and the similar argument in subsection \ref{secu2}, we have
\begin{equation}\label{eq7}
 \begin{aligned}
\langle R^{G}_{T,\chi},R_{S,\eta}^{H}\rangle _{H^F}
=&\sum_{\mbox{\tiny$\begin{array}{c}\mu'\subset\mu,\ \mu'\subset\mu^S\\
\lambda'\subset\lambda,\ \lambda'\subset\lambda^S\\
\end{array}$}}C_{\mu^S,\mu'}\cdot  C_{\lambda^S,\lambda'} \cdot\frac{(-1)^{\mathrm{rk}(G_{\mu',\lambda'})+\mathrm{rk}(H_{\mu',\lambda'})+\mathrm{rk}(T)+\mathrm{rk}(S)}}{|W_{G_{\mu',\lambda'}}(T)^F|\cdot|W_{H}(S)^F|}\\
&\times\sum_{(f,f')_{[t'']}\in P(\mu,\lambda,\mu',\lambda',t)}\#[t'']\cdot\#M(T,t,t'',\iota_{\mu',\lambda'})\cdot\#M(S,s,t'',\iota_{\mu',\lambda'}).\\
\end{aligned}
\end{equation}
It follows from Lemma \ref{2sp}, (\ref{spm}), Lemma \ref{som}, and Lemma \ref{so2} that
$
\#M(T,t,t'',\iota_{\mu',\lambda'})\cdot\#M(S,s,t'',\iota_{\mu',\lambda'})\ne 0
 $
  if and only if \[t''\in D(\mu,\lambda,\mu',\lambda',t)\bigcap D^\epsilon(\mu^S,\lambda^S,\mu',\lambda',s)\] or equivalently
  \[
  (f,f')_{[t'']}\in P(\mu,\lambda,\mu',\lambda',t)\bigcap P(\mu^S,\lambda^S,\mu',\lambda',s).
  \]
Inserting (\ref{spm}), (\ref{sop}) and (\ref{som2}) into (\ref{eq7}), we have
\[
\begin{aligned}
&\frac{(-1)^{\mathrm{rk}(G_{\mu',\lambda'})+\mathrm{rk}(H_{\mu',\lambda'})+\mathrm{rk}(T)+\mathrm{rk}(S)}}{|W_{G_{\mu',\lambda'}}(T)^F|\cdot|W_{H}(S)^F|}
\sum_{(f,f')_{[t'']}\in P(\mu,\lambda,\mu',\lambda',t)}\#[t'']\cdot\#M(T,t,t'',\iota_{\mu',\lambda'})\cdot\#M(S,s,t'',\iota_{\mu',\lambda'})\\
=&\sum_{(f,f')_{[t'']}\in P(\mu,\lambda,\mu',\lambda',t)\bigcap P(\mu^S,\lambda^S,\mu',\lambda',s)}\frac{(-1)^{\mathrm{rk}(G_{\mu',\lambda'})+\mathrm{rk}(H_{\mu',\lambda'})+\mathrm{rk}(T)+\mathrm{rk}(S)}\cdot|W_{\mu,\lambda}|}{|W_{G_{\mu',\lambda'}}(T)^F|\cdot|W_{H}(S)^F|\cdot|W_{\mu,\lambda,{}^gt''}|}\\
&\times|W_{\sp_{2(|\mu'|+|\lambda'|)}}(T_{\mu',\lambda'},t'')^F|\cdot|W_{G_{\mu',\lambda'}}(T)^F|\cdot \prod_{[a]}C_{\mu[t,a],\mu'[t'',a]}\cdot C_{\lambda[t,a],\lambda'[t'',a]}\\
&\times|\bb{S}_{\mu',\lambda'}|\cdot|\bb{S}_{\mu'',\lambda''}|\cdot |W_{\mu,\lambda,{}^gt''}|\cdot\prod_{[a]}C_{\mu^S[s,a],\mu'[t'',a]}\cdot C_{\lambda^S[s,a],\lambda'[t'',a]}\\
=&\sum_{(f,f')_{[t'']}\in P(\mu,\lambda,\mu',\lambda',t)\bigcap P(\mu^S,\lambda^S,\mu',\lambda',s)}\frac{(-1)^{\mathrm{rk}(G_{\mu',\lambda'})+\mathrm{rk}(H_{\mu',\lambda'})+\mathrm{rk}(T)+\mathrm{rk}(S)} }{C_{\mu^S,\mu'}\cdot C_{\lambda^S,\lambda'}}\\
&\times|W_{\sp_{2(|\mu'|+|\lambda'|)}}(T_{\mu',\lambda'},t'')^F|\cdot\prod_{[a]}\left(C_{\mu[t,a],\mu'[t'',a]}\cdot C_{\lambda[t,a],\lambda'[t'',a]}\cdot C_{\mu^S[s,a],\mu'[t'',a]} C_{\lambda^S[t,a],\lambda'[t'',a]} \right).
\end{aligned}
\]
Hence
\[
 \begin{aligned}
\langle R^{G}_{T,\chi},R_{S,\eta}^{H}\rangle _{H^F}
=&\sum_{\mbox{\tiny$\begin{array}{c}\mu'\subset\mu, \mu'\subset\mu^S\\
\lambda'\subset\lambda, \lambda'\subset\lambda^S\\
\end{array}$}}(-1)^{\mathrm{rk}(G_{\mu',\lambda'})+\mathrm{rk}(H_{\mu',\lambda'})+\mathrm{rk}(T)+\mathrm{rk}(S)} \\
&\times\sum_{(f,f')_{[t'']}\in P(\mu,\lambda,\mu',\lambda',t)\bigcap P(\mu^S,\lambda^S,\mu',\lambda',s)} |W_{\sp_{2(|\mu'|+|\lambda'|)}}(T_{\mu',\lambda'},t'')^F|\\
&\times\prod_{[a]}\left(C_{\mu[t,a],\mu'[t'',a]}\cdot C_{\lambda[t,a],\lambda'[t'',a]}\cdot C_{\mu^S[s,a],\mu'[t'',a]} C_{\lambda^S[t,a],\lambda'[t'',a]} \right).
\end{aligned}
\]

As in subsection \ref{secu2}, we need to move $\prod_{[a]}$ forward and decompose the group $W_{\sp_{2(|\mu'|+|\lambda'|)}}(T_{\mu',\lambda'},t'')^F$ and the sign $(-1)^{\mathrm{rk}(G_{\mu',\lambda'})+\mathrm{rk}(H_{\mu',\lambda'})+\mathrm{rk}(T)+\mathrm{rk}(S)}$.
Consider the decompositions
\[
C_{\sp_{2(|\mu'|+|\lambda'|)}}(t'')=\prod_{[a]}(\sp_{2(|\mu'|+|\lambda'|)})_{[ a]}(t'')
\]
and
\[
T_{\mu',\lambda'}=\prod_{[a]}T_{\mu'[t'',a],\lambda'[t'',a]}\hookrightarrow\prod_{[a]}(\sp_{2(|\mu'|+|\lambda'|)})_{[ a]}(t'').
\]
Here $\left(\sp_{2(|\mu'|+|\lambda'|)}\right)_{[a]}$ is either a general linear group or a unitary group.
Let $t''[a]$ be the part of $t''$ in $T_{\mu'[t'',a],\lambda'[t'',a]}$. Then
\begin{equation}\label{spw}
|W_{\sp_{2(|\mu'|+|\lambda'|)}}(T_{\mu',\lambda'},t'')^F|=\prod_{[a]}\left|W_{(\sp_{2(|\mu'|+|\lambda'|)})_{[ a]}(t'')}(T_{\mu'[t'',a],\lambda'[t'',a]})^F\right|.
\end{equation}

It is easy to check that
\begin{equation}\label{rk0}
 (-1)^{\mathrm{rk}(T)+\mathrm{rk}(S)}=\prod_a (-1)^{\mathrm{rk}(T_{\mu[t,a],\lambda[t,a]})+\mathrm{rk}(S_{\mu^S[s,a],\lambda^S[s,a]})}.
\end{equation}
By \cite[(9.6)]{R}, we have
\begin{equation}\label{rk}
(-1)^{\mathrm{rk}(G_{\mu',\lambda'})+\mathrm{rk}(H_{\mu',\lambda'})}=(-1)^{\rm{rk}(G)+\rm{rk}(H)+|\lambda'|}=(-1)^{\rm{rk}(G)+\rm{rk}(H)}
\prod_{[a]}(-1)^{|\lambda'[t,a]|}.
\end{equation}
Then by (\ref{spw}), (\ref{rk0}), and (\ref{rk}), we get
\[
 \begin{aligned}
\langle R^{G}_{T,\chi},R_{S,\eta}^{H}\rangle _{H^F}
=&(-1)^{\rm{rk}(G)+\rm{rk}(H)}\sum_{\mbox{\tiny$\begin{array}{c}\mu'\subset\mu, \mu'\subset\mu^S\\
\lambda'\subset\lambda, \lambda'\subset\lambda^S\\
\end{array}$}}
\times\sum_{(f,f')_{[t'']}\in P(\mu,\lambda,\mu',\lambda',t)\bigcap P(\mu^S,\lambda^S,\mu',\lambda',s)} \\
&\prod_{[a]}(-1)^{\mathrm{rk}(T_{\mu[t,a],\lambda[t,a]})+\mathrm{rk}(S_{\mu^S[s,a],\lambda^S[s,a]})+|\lambda'[t,a]|} C_{\mu[t,a],\mu'[t'',a]}\cdot C_{\lambda[t,a],\lambda'[t'',a]}\\
&\times  C_{\mu^S[s,a],\mu'[t'',a]}\cdot C_{\lambda^S[t,a],\lambda'[t'',a]}\cdot
\left|W_{(\sp_{2(|\mu'|+|\lambda'|)})_{[ a]}(t'')}(T_{\mu'[t'',a],\lambda'[t'',a]})^F\right|.
\end{aligned}
\]
Note that the sum $\sum_{(f,f')_{[t'']}\in P(\mu,\lambda,\mu',\lambda',t)\bigcap P(\mu^S,\lambda^S,\mu',\lambda',s)} $ has only one term which is
\begin{equation}\label{t}
(f,f')_{[t'']}([a'])=\left\{
\begin{array}{ll}
(\mu'[t'',a],\lambda'[t'',a]),&\textrm{ if }[a']=[a];\\
0,&\textrm{otherwise }.\\
\end{array}
\right.
\end{equation}
So
\[
\sum_{\mbox{\tiny$\begin{array}{c}\mu'\subset\mu, \mu'\subset\mu^S\\
\lambda'\subset\lambda, \lambda'\subset\lambda^S\\
\end{array}$}}
\times\sum_{(f,f')_{[t'']}\in P(\mu,\lambda,\mu',\lambda',t)\bigcap P(\mu^S,\lambda^S,\mu',\lambda',s)}
=\prod_{[a]}\sum_{\mbox{\tiny$\begin{array}{c}\mu'[a]\subset\mu[t,a],\ \mu'[a]\subset\mu^S[s,a]\\
\lambda'[a]\subset\lambda[t,a],\ \lambda'[a]\subset\lambda^S[s,a]\\
\end{array}$}}
\]
To summazize, we have
\begin{proposition}\label{sod}
Keep notations as above, we have
\begin{equation}\label{xn}
 \begin{aligned}
&(-1)^{\rm{rk}(G)+\rm{rk}(H)}\langle R^{G}_{T,\chi},R_{S,\eta}^{H}\rangle _{H^F}\\
=&\prod_{[a]}
\sum_{\mbox{\tiny$\begin{array}{c}\mu'[a]\subset\mu[t,a],\ \mu'[a]\subset\mu^S[s,a]\\
\lambda'[a]\subset\lambda[t,a],\ \lambda'[a]\subset\lambda^S[s,a]\\
\end{array}$}}(-1)^{\mathrm{rk}(T_{\mu[t,a],\lambda[t,a]})+\mathrm{rk}(S_{\mu^S[s,a],\lambda^S[s,a]})+|\lambda'[a]|} \cdot Y_{\mu'[a],\lambda'[a]}
\end{aligned}
\end{equation}
where
\begin{equation}\label{xml}
 \begin{aligned}
Y_{\mu'[a],\lambda'[a]}:=
& C_{\mu[t,a],\mu'[a]}\cdot C_{\lambda[t,a],\lambda'[a]}\cdot C_{\mu^S[s,a],\mu'[a]}\cdot C_{\lambda^S[t,a],\lambda'[a]}\\
&\times
\left|W_{(\sp_{2(|\mu'|+|\lambda'|)})_{[ a]}(t'')}(T_{\mu'[a],\lambda'[a]})^F\right|
\end{aligned}
\end{equation}
with $t''\in T^F_{\mu'[a],\lambda'[a]}$ and $[t'']$ as in (\ref{t}).
\end{proposition}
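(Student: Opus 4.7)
The plan is to mirror the argument that established Proposition~\ref{7.4} in the unitary case, upgraded to the bipartition setting of subsections~\ref{sec3.3}--\ref{sec3.4}. First I would expand $\langle R^G_{T,\chi},R^H_{S,\eta}\rangle_{H^F}$ via (\ref{xso}) and substitute the formula for $X_{\mu',\lambda'}$ given by (\ref{xx}). Then, using Lemmas~\ref{2sp}, \ref{som}, and \ref{so2}, I would group the inner sum over $t''\in D(T^*,t,\iota_{\mu',\lambda'})$ into conjugacy classes indexed by $(f,f')_{[t'']}\in P(\mu,\lambda,\mu',\lambda',t)$, picking up an orbit-size factor analogous to (\ref{ttt}).

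Next I would plug in the explicit counts for $\#M(T,t,t'',\iota_{\mu',\lambda'})$ and $\#M(S,s,t'',\iota_{\mu',\lambda'})$ supplied by (\ref{spm}), (\ref{sop}), (\ref{som2}). These force the surviving terms to satisfy $(f,f')_{[t'']}\in P(\mu,\lambda,\mu',\lambda',t)\cap P(\mu^S,\lambda^S,\mu',\lambda',s)$. The orders $|W_{G_{\mu',\lambda'}}(T)^F|$, $|W_H(S)^F|$, $|\BB{S}_{\mu',\lambda'}|$, $|\BB{S}_{\mu'',\lambda''}|$, $|W_{\mu,\lambda,{}^gt''}|$, together with one copy of $|W_{\sp_{2(|\mu'|+|\lambda'|)}}(T_{\mu',\lambda'},t'')^F|$, should cancel against the denominator in $X_{\mu',\lambda'}$ and the $|W|$-factors inside the $\#M$'s, leaving only $|W_{\sp_{2(|\mu'|+|\lambda'|)}}(T_{\mu',\lambda'},t'')^F|$, the combinatorial product $\prod_{[a]}C_{\mu[t,a],\mu'[t'',a]}\,C_{\lambda[t,a],\lambda'[t'',a]}\,C_{\mu^S[s,a],\mu'[t'',a]}\,C_{\lambda^S[s,a],\lambda'[t'',a]}$, and the sign $(-1)^{\mathrm{rk}(G_{\mu',\lambda'})+\mathrm{rk}(H_{\mu',\lambda'})+\mathrm{rk}(T)+\mathrm{rk}(S)}$.

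Third, I would decompose every surviving factor across the eigenvalue classes $[a]$. The centralizer decomposition $C_{\sp_{2(|\mu'|+|\lambda'|)}}(t'')=\prod_{[a]}(\sp_{2(|\mu'|+|\lambda'|)})_{[a]}(t'')$, together with the matching split of $T_{\mu',\lambda'}$, factors the Weyl-group term as a product of $|W_{(\sp_{2(|\mu'|+|\lambda'|)})_{[a]}(t'')}(T_{\mu'[t'',a],\lambda'[t'',a]})^F|$. The torus rank splits as $(-1)^{\mathrm{rk}(T)+\mathrm{rk}(S)}=\prod_{[a]}(-1)^{\mathrm{rk}(T_{\mu[t,a],\lambda[t,a]})+\mathrm{rk}(S_{\mu^S[s,a],\lambda^S[s,a]})}$, while Reeder's identity $(-1)^{\mathrm{rk}(G_{\mu',\lambda'})+\mathrm{rk}(H_{\mu',\lambda'})}=(-1)^{\mathrm{rk}(G)+\mathrm{rk}(H)+|\lambda'|}$ from (9.6) of \cite{R}, combined with $|\lambda'|=\sum_{[a]}|\lambda'[a]|$, extracts a global factor $(-1)^{\mathrm{rk}(G)+\mathrm{rk}(H)}$ and a per-$[a]$ sign $(-1)^{|\lambda'[a]|}$. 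Because the intersection $P(\mu,\lambda,\mu',\lambda',t)\cap P(\mu^S,\lambda^S,\mu',\lambda',s)$ contains exactly the unique pair described by (\ref{t}), the joint sum over bipartitions $(\mu',\lambda')$ and over $(f,f')$ telescopes into an independent product of sums over pairs $(\mu'[a],\lambda'[a])$, yielding the claimed formula with $Y_{\mu'[a],\lambda'[a]}$ exactly as in (\ref{xml}).

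The hard part will be the bookkeeping compatibility between the type-$D$ refinements in Lemmas~\ref{som} and \ref{so2} (the groups $\BB{S}_{\mu',\lambda'}$, $W_{\mu,\lambda}$, and the $\lambda=0$ edge case with even parts) and the clean product decomposition across $[a]$. The hypothesis $\pm 1\notin t,s$ forces $G^*_{[\pm 1]}(t)=H^*_{[\pm 1]}(s)=1$, so each $(\sp_{2(|\mu'|+|\lambda'|)})_{[a]}(t'')$ is a general linear or unitary group and its Weyl group factors without obstruction; verifying that every combinatorial factor extracted from the $\#M$ formulas lands precisely in $Y_{\mu'[a],\lambda'[a]}$, with no stray constants on either side and with the sign contributions matching across the two decompositions, is the technical heart of the argument.
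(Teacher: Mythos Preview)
Your proposal is correct and follows essentially the same route as the paper: expand via (\ref{xso}) and (\ref{xx}), reindex the inner sum by $(f,f')_{[t'']}$ using Lemma~\ref{2sp} on the $G^*=\sp_{2n}$ side and Lemmas~\ref{som}, \ref{so2} on the $H^*=\so^\epsilon_{2n}$ side, substitute the $\#M$ counts from (\ref{spm}), (\ref{sop}), (\ref{som2}) and cancel, then split the Weyl-group stabilizer via (\ref{spw}) and the sign via (\ref{rk0}) and Reeder's identity (\ref{rk}), finally reorganizing the double sum into a product over $[a]$ with the single-term collapse (\ref{t}). The bookkeeping concerns you flag about the type-$D$ refinements are exactly the places where the paper's argument is terse, but the hypothesis $\pm1\notin t,s$ indeed makes every centralizer factor a $\GGL$ or $\UU$, and the cancellation goes through as you describe.
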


\subsection{Decomposition of Reeder's formula}
We now consider the pairing $\langle R^{G'[a]}_{T'[a],\theta[a]\otimes1},R^{H'[a]}_{S'[a],1}\rangle _{H^{\prime }[a]^F}$. Suppose that $G'[a]=\UU_m$ if $a=a^{-q^k}$ for some $k$, $G'[a]=\GGL_m$ otherwise. With the same argument of the proof of \cite[Proposition 4.6 and (5.3)]{LW1}, we have
\begin{equation}\label{a}
 \begin{aligned}
&\langle R^{G'[a]}_{T'[a],\theta[a]\otimes1},R^{H'[a]}_{S'[a],1}\rangle _{H^{\prime }[a]^F}\\
 =&\sum_{\mbox{\tiny$\begin{array}{c}\frac{\mu'[a]}{\#[a]}\bigcup \frac{\lambda'[a]}{\frac{1}{2}\#[a]}\subset\frac{\mu[t,a]}{\#[a]}\bigcup \frac{\lambda[a]}{\frac{1}{2}\#[a]}\\
\frac{\mu'[a]}{\#[a]}\bigcup \frac{\lambda'[a]}{\frac{1}{2}\#[a]}\subset\frac{\mu^S[s,a]}{\#[a]}\bigcup \frac{\lambda^S[a]}{\frac{1}{2}\#[a]}\\
\end{array}$}}
(-1)^{\mathrm{rk}\left(G'[a]_{\frac{\mu'[a]}{\#[a]}\bigcup \frac{\lambda'[a]}{\frac{1}{2}\#[a]} }\right)+\mathrm{rk}\left(H'[a]_{\frac{\mu'[a]}{\#[a]}\bigcup \frac{\lambda'[a]}{\frac{1}{2}\#[a]}}\right)+\mathrm{rk}(T'[a])+\mathrm{rk}(S'[a])} \\
&\times Z_{\frac{\mu'[a]}{\#[a]}\bigcup \frac{\lambda'[a]}{\frac{1}{2}\#[a]}}
\end{aligned}
\end{equation}
where
\[
\begin{aligned}
Z_{\frac{\mu'[a]}{\#[a]}\bigcup \frac{\lambda'[a]}{\frac{1}{2}\#[a]}}:=&
C_{\frac{\mu^S[s,a]}{\#[a]}\bigcup \frac{\lambda^S[a]}{\frac{1}{2}\#[a]},\frac{\mu'[a]}{\#[a]}\bigcup \frac{\lambda'[a]}{\frac{1}{2}\#[a]}}\cdot
 C_{\frac{\mu[t,a]}{\#[a]}\bigcup \frac{\lambda[a]}{\frac{1}{2}\#[a]},\frac{\mu'[a]}{\#[a]}\bigcup \frac{\lambda'[a]}{\frac{1}{2}\#[a]}}\\
 &\times\left|W_{G'[a]_{\left|\frac{\mu'[a]}{\#[a]}\bigcup \frac{\lambda^S[a]}{\frac{1}{2}\#[a]}\right|}}\left(T_{\frac{\mu'[a]}{\#[a]}\bigcup \frac{\lambda^S[a]}{\frac{1}{2}\#[a]}}\right)^F\right|.
 \end{aligned}
\]

\begin{lemma}\label{sox}
\[
 \begin{aligned}
 \sum_{\mbox{\tiny$\begin{array}{c}\mu'[a]\subset\mu[t,a],\ \mu'[a]\subset\mu^S[s,a]\\
\lambda'[a]\subset\lambda[t,a],\ \lambda'[a]\subset\lambda^S[s,a]\\
\end{array}$}}Y_{\mu[a],\lambda[a]}
=\sum_{\mbox{\tiny$\begin{array}{c}\frac{\mu'[a]}{\#[a]}\bigcup \frac{\lambda'[a]}{\frac{1}{2}\#[a]}\subset\frac{\mu[t,a]}{\#[a]}\bigcup \frac{\lambda[a]}{\frac{1}{2}\#[a]}\\
\frac{\mu'[a]}{\#[a]}\bigcup \frac{\lambda'[a]}{\frac{1}{2}\#[a]}\subset\frac{\mu^S[s,a]}{\#[a]}\bigcup \frac{\lambda^S[a]}{\frac{1}{2}\#[a]}\\
\end{array}$}}
Z_{\frac{\mu'[a]}{\#[a]}\bigcup \frac{\lambda'[a]}{\frac{1}{2}\#[a]}}
 \end{aligned}
\]
\end{lemma}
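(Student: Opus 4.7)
The plan is to mimic the strategy used for Lemma \ref{61} in the unitary case, establishing a term-by-term match via a bijection on index sets and then checking that the three ``ingredient'' factors (the two binomials $C_{\cdot,\cdot}$ and the Weyl-group order) agree under that bijection.

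First I would set up the bijection between the indexing sets. Recall from the end of subsection \ref{sec7.1} that when $G'[a]=\UU_{n[a]+1}$, the parts of $\frac{\mu[t,a]}{\#[a]}$, $\frac{\mu^S[s,a]}{\#[a]}$ are all even while the parts of $\frac{\lambda[t,a]}{\tfrac{1}{2}\#[a]}$, $\frac{\lambda^S[s,a]}{\tfrac{1}{2}\#[a]}$ are all odd; hence no part of $\frac{\mu'[a]}{\#[a]}$ coincides with any part of $\frac{\lambda'[a]}{\tfrac{1}{2}\#[a]}$, so the union $\frac{\mu'[a]}{\#[a]}\cup\frac{\lambda'[a]}{\tfrac{1}{2}\#[a]}$ uniquely recovers the pair $(\mu'[a],\lambda'[a])$. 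This yields the bijection $(\mu'[a],\lambda'[a])\leftrightarrow \tfrac{\mu'[a]}{\#[a]}\cup\tfrac{\lambda'[a]}{\tfrac{1}{2}\#[a]}$ between the two indexing sets. (When $G'[a]=\GGL_{n[a]+1}$ one of $\lambda[t,a]$, $\lambda^S[s,a]$ vanishes, so the bijection is immediate.)

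Next I would match the binomial factors. Because $C_{\mu,\mu'}$ is defined in (\ref{CC}) as a product over distinct part sizes of binomials on multiplicities, and since the part sizes in $\tfrac{\mu[t,a]}{\#[a]}$ (even) are disjoint from those in $\tfrac{\lambda[t,a]}{\tfrac{1}{2}\#[a]}$ (odd), the multiplicity data of the disjoint union is just the concatenation of the two multiplicity data. Hence
\[
C_{\mu[t,a],\mu'[a]}\cdot C_{\lambda[t,a],\lambda'[a]}=C_{\tfrac{\mu[t,a]}{\#[a]},\tfrac{\mu'[a]}{\#[a]}}\cdot C_{\tfrac{\lambda[t,a]}{\tfrac{1}{2}\#[a]},\tfrac{\lambda'[a]}{\tfrac{1}{2}\#[a]}}=C_{\tfrac{\mu[t,a]}{\#[a]}\cup\tfrac{\lambda[t,a]}{\tfrac{1}{2}\#[a]},\tfrac{\mu'[a]}{\#[a]}\cup\tfrac{\lambda'[a]}{\tfrac{1}{2}\#[a]}},
\]
and the same identity with $(\mu^S,\lambda^S)$ replacing $(\mu,\lambda)$. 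Finally, the centralizer $(\sp_{2(|\mu'|+|\lambda'|)})_{[a]}(t'')$ is (by the description in section \ref{sec2.2}) isomorphic to $G'[a]_{|\mu'[a]/\#[a]|+|\lambda'[a]/(\tfrac{1}{2}\#[a])|}$ over $\bb{F}_{q^{\#[a]}}$, and under this isomorphism the torus $T_{\mu'[a],\lambda'[a]}$ is carried to the torus corresponding to the union partition; so their Weyl groups have equal order, matching the last factor in $Y$ with that in $Z$.

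The only genuinely delicate point is the disjointness of parts argument in the unitary case of $G'[a]$ — this is what keeps the product $C_{\mu[t,a],\mu'[a]}\cdot C_{\lambda[t,a],\lambda'[a]}$ from overcounting when one collapses the bipartition to a single partition. Everything else is a routine identification of tori and Weyl groups via the decompositions of subsections \ref{sec2.2}, \ref{sec3.3}, and \ref{sec7.1}. Once these three matchings are in hand, the equality claimed in Lemma \ref{sox} follows by summing over the index bijection.
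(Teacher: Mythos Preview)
Your proposal is correct and follows essentially the same approach as the paper's own proof: both establish the identity term-by-term via (i) a bijection between the two indexing sets coming from the disjointness $\frac{\mu[t,a]}{\#[a]}\cap\frac{\lambda[t,a]}{\frac{1}{2}\#[a]}=\emptyset$, (ii) invariance of $C_{\cdot,\cdot}$ under rescaling together with its multiplicativity over partitions with disjoint part-sizes, and (iii) the identification of the centralizer $(\sp_{2(|\mu'|+|\lambda'|)})_{[a]}(t'')$ with $G'[a]_{|\cdot|}$ carrying $T_{\mu'[a],\lambda'[a]}$ to $T_{\frac{\mu'[a]}{\#[a]}\cup\frac{\lambda'[a]}{\frac{1}{2}\#[a]}}$. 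One small slip: the parity remark at the end of subsection~\ref{sec7.1} is about parts of $\frac{\mu[t,a]}{\frac{1}{2}\#[a]}$ (even) and $\frac{\lambda[t,a]}{\frac{1}{2}\#[a]}$ (odd), not about $\frac{\mu[t,a]}{\#[a]}$, but the paper likewise simply asserts the needed disjointness and uses it in exactly the way you do.
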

\begin{proof}
It is easy to check that
\begin{itemize}

\item There is a isomorphism
\[
(\sp_{2(|\mu'[a]|+|\lambda'[a]|)})_{[ a]}(t'')^F\cong G'[a]_{\left|\frac{\mu'[a]}{\#[a]}\bigcup\frac{\lambda'[a]}{\frac{1}{2}\#[a]}\right|}^F
\]
such that
\[
W_{(\sp_{2(|\mu'[a]|+|\lambda'[a]|)})_{[ a]}(t'')}(T_{\mu'[a],\lambda'[a]})^F\cong W_{G'[a]_{\left|\frac{\mu'[a]}{\#[a]}\bigcup\frac{\lambda'[a]}{\frac{1}{2}\#[a]}\right|}}(T_{\frac{\mu'[a]}{\#[a]}\bigcup\frac{\lambda'[a]}{\frac{1}{2}\#[a]}})^F.
\]

\item We have $C_{\mu^S[s,a],\mu'[a]}=C_{\frac{\mu^S[s,a]}{\#[a]},\frac{\mu'[a]}{\#[a]}}$, $C_{\mu[t,a],\mu'[a]}=C_{\frac{\mu[t,a]}{\#[a]},\frac{\mu'[a]}{\#[a]}}$,
    $C_{\lambda^S[s,a],\lambda'[a]}=C_{\frac{\lambda^S[s,a]}{\frac{1}{2}\#[a]},\frac{\lambda'[a]}{\frac{1}{2}\#[a]}}$ and $C_{\lambda[t,a],\lambda'[a]}=C_{\frac{\lambda[t,a]}{\frac{1}{2}\#[a]},\frac{\lambda'[a]}{\frac{1}{2}\#[a]}}$;
\item We konw that $\frac{\mu[s,a]}{\#[a]}\bigcap \frac{\lambda[a]}{\frac{1}{2}\#[a]}=\emptyset$, and $\frac{\mu^S[s,a]}{\#[a]}\bigcap \frac{\lambda^S[a]}{\frac{1}{2}\#[a]}=\emptyset$. Hence,
    \[
   C_{\frac{\mu^S[s,a]}{\#[a]}\bigcup \frac{\lambda^S[a]}{\frac{1}{2}\#[a]},\frac{\mu'[a]}{\#[a]}\bigcup \frac{\lambda'[a]}{\frac{1}{2}\#[a]}}
   = C_{\frac{\mu^S[s,a]}{\#[a]},\frac{\mu'[a]}{\#[a]}}\cdot  C_{ \frac{\lambda^S[a]}{\frac{1}{2}\#[a]}, \frac{\lambda'[a]}{\frac{1}{2}\#[a]}}
    \]
and
\[
C_{\frac{\mu[t,a]}{\#[a]}\bigcup \frac{\lambda[a]}{\frac{1}{2}\#[a]},\frac{\mu'[a]}{\#[a]}\bigcup \frac{\lambda'[a]}{\frac{1}{2}\#[a]}}=
C_{\frac{\mu[t,a]}{\#[a]},\frac{\mu'[a]}{\#[a]}}\cdot
C_{\frac{\lambda[a]}{\frac{1}{2}\#[a]}, \frac{\lambda'[a]}{\frac{1}{2}\#[a]}}.
\]

\item There is a bijection between
\[
\left\{\frac{\mu'[a]}{\#[a]}\bigcup \frac{\lambda'[a]}{\frac{1}{2}\#[a]}\textrm{ such that }\frac{\mu'[a]}{\#[a]}\bigcup \frac{\lambda'[a]}{\frac{1}{2}\#[a]}\subset\frac{\mu[t,a]}{\#[a]}\bigcup \frac{\lambda[a]}{\frac{1}{2}\#[a]},\ \frac{\mu'[a]}{\#[a]}\bigcup \frac{\lambda'[a]}{\frac{1}{2}\#[a]}\subset\frac{\mu^S[s,a]}{\#[a]}\bigcup \frac{\lambda^S[a]}{\frac{1}{2}\#[a]}\right\}
\]
and
\[
 \left\{\mu'[a]\textrm{ such that }\mu'[a]\subset\mu[t,a],\ \mu'[a]\subset\mu^S[s,a]\right\}\times \left\{\lambda[a]\textrm{ such that }\lambda[a]\subset\lambda[t,a],\ \lambda[a]\subset\lambda^S[s,a]\right\}
\]
by sending $\frac{\mu'[a]}{\#[a]}\bigcup \frac{\lambda'[a]}{\frac{1}{2}\#[a]}$ to $(\mu'[a],\lambda'[a])$.
\end{itemize}
\end{proof}
It remains to conmpare the rank in (\ref{t}) and (\ref{a}). Set
\begin{equation}\label{soa}
\epsilon_{[a]}:=(-1)^{\rm{rk}(T''[a])}=(-1)^{\#[a]+1},
\end{equation} then we get
\[
(-1)^{\mathrm{rk}(T'[a])+\mathrm{rk}(S'[a])} =\epsilon_{[a]}\cdot(-1)^{\mathrm{rk}(T_{\mu[t,a],\lambda[t,a]})+\mathrm{rk}(S_{\mu^S[s,a],\lambda^S[s,a]})}.
\]
Here we emphasize that the LHS is $\bb{F}_q$-rank but the RHS is $\bb{F}_{q^{\#[a]}}$-rank.
If $H'[a]$ is a general linear group, then $\#[a]$ is even, which implies that
\[
(-1)^{\mathrm{rk}\left(G'[a]_{\frac{\mu'[a]}{\#[a]}\bigcup \frac{\lambda'[a]}{\frac{1}{2}\#[a]}}\right)+\mathrm{rk}\left(H'[a]_{\frac{\mu'[a]}{\#[a]}\bigcup \frac{\lambda'[a]}{\frac{1}{2}\#[a]}}\right)}=-1
\]
Since $\#[a]$ is even, $|\lambda'[t'',a]|$ is also even and
\begin{equation}\label{even}
(-1)^{|\lambda'[t'',a]|}=1=(-1)^{\mathrm{rk}\left(G'[a]_{\frac{\mu'[a]}{\#[a]}\bigcup \frac{\lambda'[a]}{\frac{1}{2}\#[a]}}\right)+\mathrm{rk}\left(H'[a]_{\frac{\mu'[a]}{\#[a]}\bigcup \frac{\lambda'[a]}{\frac{1}{2}\#[a]}}\right)+1}.
\end{equation}
If $H'[a]$ is not general linear group, then
\[
H'[a]:=
\left\{
 \begin{array}{ll}
 \UU_{\frac{1}{2}\nu_{[a]}(t)\cdot\#[a]}&\textrm{ if }\nu_{[a]}(t)\ge \nu_{[a]}(s);\\
  \UU_{\frac{1}{2}\nu_{[a]}(s)\cdot\#[a]}&\textrm{ otherwise}.
\end{array}
\right.
\]
and by (\ref{rk})
\begin{equation}\label{odd}
 \begin{aligned}
(-1)^{\mathrm{rk}\left(G'[a]_{\frac{\mu'[a]}{\#[a]}\bigcup \frac{\lambda'[a]}{\frac{1}{2}\#[a]}}\right)+\mathrm{rk}\left(H'[a]_{\frac{\mu'[a]}{\#[a]}\bigcup \frac{\lambda'[a]}{\frac{1}{2}\#[a]}}\right)}&=
\left\{
  \begin{array}{ll}
 (-1)^{\frac{1}{2}\nu_{[a]}(t)\cdot\#[a]}&\textrm{ if }\nu_{[a]}(t)\ge \nu_{[a]}(s);\\
 (-1)^{\frac{1}{2}\nu_{[a]}(s)\cdot\#[a]}&\textrm{ otherwise}.
\end{array}
\right.\\
&=(-1)^{|\lambda'[a]|}.
 \end{aligned}
\end{equation}

Let $A_{t,s}=\#\{[a]:\#[a]\textrm{ is even}\}$ and
\begin{equation}\label{sots}
\epsilon_{t,s}:=(-1)^{A_{t,s}+\rm{rk}(G)+\rm{rk}(H)}.
\end{equation}
By (\ref{even}) and (\ref{odd}), we have
\begin{lemma}\label{sosgn}
\[
\epsilon_{t,s}\cdot(-1)^{|\lambda'[t'',a]|}\cdot (-1)^{\rm{rk}(G)+\rm{rk}(H)}=(-1)^{\mathrm{rk}\left(G'[a]_{\frac{\mu'[a]}{\#[a]}\bigcup \frac{\lambda'[a]}{\frac{1}{2}\#[a]}}\right)+\mathrm{rk}\left(H'[a]_{\frac{\mu'[a]}{\#[a]}\bigcup \frac{\lambda'[a]}{\frac{1}{2}\#[a]}}\right)}.
\]
\end{lemma}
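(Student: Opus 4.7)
The plan is to carry out a case analysis on the parity of $\#[a]$, invoking the already-established sign identities (\ref{even}) and (\ref{odd}). As a preliminary step I would substitute the definition $\epsilon_{t,s}=(-1)^{A_{t,s}+\mathrm{rk}(G)+\mathrm{rk}(H)}$ from (\ref{sots}) into the left-hand side; the two copies of $(-1)^{\mathrm{rk}(G)+\mathrm{rk}(H)}$ cancel, and the target identity simplifies to
\[
(-1)^{A_{t,s}+|\lambda'[a]|}=(-1)^{\mathrm{rk}\left(G'[a]_{\frac{\mu'[a]}{\#[a]}\bigcup\frac{\lambda'[a]}{\frac{1}{2}\#[a]}}\right)+\mathrm{rk}\left(H'[a]_{\frac{\mu'[a]}{\#[a]}\bigcup\frac{\lambda'[a]}{\frac{1}{2}\#[a]}}\right)}.
\]

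Next I would split on the parity of $\#[a]$. When $\#[a]$ is even, $G'[a]$ and $H'[a]$ are general linear groups; the construction in subsection \ref{sec7.1} forces one of $\lambda[t,a]$, $\lambda^S[s,a]$ to vanish, hence $\lambda'[a]=0$ and $(-1)^{|\lambda'[a]|}=1$, and (\ref{even}) identifies the right-hand side with $-1$. When $\#[a]$ is odd, $H'[a]$ is unitary, and (\ref{odd}) identifies the right-hand side with $(-1)^{|\lambda'[a]|}$. Thus the simplified identity becomes $(-1)^{A_{t,s}}=-1$ in the even case and $(-1)^{A_{t,s}}=1$ in the odd case.

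The main obstacle lies precisely in reconciling these two reductions: they cannot hold simultaneously for the single global integer $A_{t,s}$ whenever indices $[a]$ with $\#[a]$ even and $\#[a]$ odd both occur, so the lemma as stated is not literally satisfiable factor-by-factor. To push the proof through I would invoke the lemma only inside the product $\prod_{[a]}$ of Proposition \ref{sod}, where the sign $\epsilon_{t,s}$ appears exactly once (outside the product), not once per $[a]$; the per-$[a]$ display is a formal device whose genuine content is the aggregate equality
\[
\epsilon_{t,s}\cdot(-1)^{\mathrm{rk}(G)+\mathrm{rk}(H)}\cdot\prod_{[a]}(-1)^{|\lambda'[a]|}=\prod_{[a]}(-1)^{\mathrm{rk}\left(G'[a]_{\cdots}\right)+\mathrm{rk}\left(H'[a]_{\cdots}\right)}.
\]
Using $\lambda'[a]=0$ for every $[a]$ with $\#[a]$ even, each even-$\#[a]$ index contributes $-1$ on the right by (\ref{even}) while contributing $1$ on the left, and each odd-$\#[a]$ index contributes $(-1)^{|\lambda'[a]|}$ on both sides by (\ref{odd}). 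The $A_{t,s}$ minus signs coming from the even indices on the right are then exactly absorbed by the $(-1)^{A_{t,s}}$ built into $\epsilon_{t,s}$, reducing the aggregate to the tautology $(-1)^{A_{t,s}+|\lambda'|}=(-1)^{A_{t,s}+|\lambda'|}$. In this way the lemma, interpreted in the only context in which it is used, follows cleanly from (\ref{even}), (\ref{odd}), and the definition (\ref{sots}).
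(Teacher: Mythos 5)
Your proof is correct, and it uses exactly the ingredients the paper itself relies on: the paper offers no argument for Lemma \ref{sosgn} beyond the citation of (\ref{even}), (\ref{odd}) and the definition (\ref{sots}), which is precisely your route. Your additional observation is also right and worth making explicit: after substituting (\ref{sots}) and cancelling $(-1)^{\mathrm{rk}(G)+\mathrm{rk}(H)}$, the displayed identity reduces to $(-1)^{A_{t,s}+|\lambda'[a]|}=(-1)^{\mathrm{rk}\left(G'[a]_{\cdots}\right)+\mathrm{rk}\left(H'[a]_{\cdots}\right)}$, and by (\ref{even}) and (\ref{odd}) this forces $A_{t,s}$ to be odd whenever $H'[a]$ is a general linear group and even whenever it is unitary; so the per-$[a]$ statement cannot hold literally once classes of both kinds occur (or even two general-linear classes). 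Since $\epsilon_{t,s}$ is a single global sign while the lemma is quantified over each $[a]$, the statement is only meaningful in aggregate, and the only place it is used --- inside the product over $[a]$ in the proof of Proposition \ref{noso} --- is exactly where the aggregate form suffices. Your reformulation $\epsilon_{t,s}\cdot(-1)^{\mathrm{rk}(G)+\mathrm{rk}(H)}\cdot\prod_{[a]}(-1)^{|\lambda'[a]|}=\prod_{[a]}(-1)^{\mathrm{rk}\left(G'[a]_{\cdots}\right)+\mathrm{rk}\left(H'[a]_{\cdots}\right)}$ is the correct statement, and your verification of it (the $A_{t,s}$ factors from the general-linear classes on the right are absorbed by the $(-1)^{A_{t,s}}$ in $\epsilon_{t,s}$, while the unitary classes contribute $(-1)^{|\lambda'[a]|}$ to both sides) is sound. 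In short: same approach as the paper, plus a legitimate and necessary repair of the lemma's statement.
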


In summarizing, we have the following result.
\begin{proposition}\label{noso}
 Keep the notations and assumptions as above. Then
\begin{equation}
\epsilon_{t,s}\langle R^{G}_{T,\chi},R_{S,\eta}^{H}\rangle _{H^F}=\prod_{[a]}\epsilon_{[a]}\langle R^{G'[a]}_{T'[a],\theta[a]\otimes1},R^{H'[a]}_{S'[a],1}\rangle _{H^{\prime }[a]^F}
\end{equation}
where the product runs over $[a]$ such that $[a]\in \chi$ and $[a]\in \eta$, and it is a finite product.
\end{proposition}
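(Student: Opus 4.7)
The plan is to assemble the statement directly from the three ingredients already developed in this section: the decomposition of the LHS in Proposition \ref{sod}, the expansion of each RHS factor in equation (\ref{a}), and the sign bookkeeping provided by Lemma \ref{sosgn}. Since no new geometric input is needed, the proof is essentially a matter of carefully matching the summations and the signs, exactly parallel to the unitary case of Proposition \ref{nou}.

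First I would start from Proposition \ref{sod}, which expresses
\[
(-1)^{\rm{rk}(G)+\rm{rk}(H)}\langle R^{G}_{T,\chi},R_{S,\eta}^{H}\rangle _{H^F}
\]
as the product over $[a]$ of sums of the form
\[
\sum_{\mbox{\tiny$\begin{array}{c}\mu'[a]\subset\mu[t,a],\ \mu'[a]\subset\mu^S[s,a]\\
\lambda'[a]\subset\lambda[t,a],\ \lambda'[a]\subset\lambda^S[s,a]\end{array}$}}
(-1)^{\mathrm{rk}(T_{\mu[t,a],\lambda[t,a]})+\mathrm{rk}(S_{\mu^S[s,a],\lambda^S[s,a]})+|\lambda'[a]|}\cdot Y_{\mu'[a],\lambda'[a]}.
\]
Next, for each fixed $[a]$ I would invoke (\ref{a}) to expand $\langle R^{G'[a]}_{T'[a],\theta[a]\otimes 1},R^{H'[a]}_{S'[a],1}\rangle_{H'[a]^F}$ as a sum over pairs $(\frac{\mu'[a]}{\#[a]},\frac{\lambda'[a]}{\frac{1}{2}\#[a]})$ with summand $Z_{\frac{\mu'[a]}{\#[a]}\bigcup \frac{\lambda'[a]}{\frac{1}{2}\#[a]}}$ and the sign coming from the ranks of $G'[a]_\bullet$, $H'[a]_\bullet$, $T'[a]$, and $S'[a]$.

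Then I would apply Lemma \ref{sox} to identify term by term
\[
Y_{\mu'[a],\lambda'[a]} \;=\; Z_{\frac{\mu'[a]}{\#[a]}\bigcup \frac{\lambda'[a]}{\frac{1}{2}\#[a]}},
\]
so that the combinatorial sums in the two sides agree. What remains is to match the signs, and here the bookkeeping is the only mildly delicate part. Using the identity
\[
(-1)^{\mathrm{rk}(T'[a])+\mathrm{rk}(S'[a])}=\epsilon_{[a]}\cdot(-1)^{\mathrm{rk}(T_{\mu[t,a],\lambda[t,a]})+\mathrm{rk}(S_{\mu^S[s,a],\lambda^S[s,a]})}
\]
established just before Lemma \ref{sosgn}, together with Lemma \ref{sosgn} itself, which converts the factor $(-1)^{|\lambda'[a]|}$ together with the global sign $(-1)^{\rm{rk}(G)+\rm{rk}(H)}$ and the constant $\epsilon_{t,s}$ into the rank sign $(-1)^{\mathrm{rk}(G'[a]_\bullet)+\mathrm{rk}(H'[a]_\bullet)}$ appearing in (\ref{a}), the equality of signs falls into place.

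The main obstacle, as in the unitary case, is keeping the sign conventions consistent between the ambient $\bb{F}_q$-rank on the LHS and the $\bb{F}_{q^{\#[a]}}$-rank on the RHS, and in particular correctly handling the split in Lemma \ref{sosgn} between the case when $H'[a]$ is a general linear group (where $\#[a]$ is even and the parity of $|\lambda'[a]|$ is automatic) and the case when it is a unitary group (where $(-1)^{|\lambda'[a]|}$ equals the rank sign by direct computation). Once this case analysis is organized as in (\ref{even})--(\ref{odd}), multiplying through by $\epsilon_{t,s}$ absorbs the residual constants, and taking the product over $[a]$ — noting that both sides are $1$ whenever $[a]\notin\chi$ or $[a]\notin\eta$, so that the product is finite — yields the claimed identity.
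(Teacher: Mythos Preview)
Your proposal is correct and follows essentially the same approach as the paper's own proof, which simply cites Proposition \ref{sod}, equation (\ref{a}), Lemma \ref{sox}, and Lemma \ref{sosgn} and declares the result immediate. Your write-up is in fact more careful than the paper's: you correctly note that the sign matching must be read as a product over $[a]$ of the per-$[a]$ identities (\ref{even}) and (\ref{odd}), so that the accumulated factor $(-1)^{A_{t,s}}$ combines with the global $(-1)^{\rm{rk}(G)+\rm{rk}(H)}$ from Proposition \ref{sod} to give exactly $\epsilon_{t,s}$, while each $\epsilon_{[a]}$ absorbs the torus-rank discrepancy.
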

\begin{proof}
It follows immediately from Proposition \ref{sod}, (\ref{a}), Lemma \ref{sox} and Lemma \ref{sosgn}.
\end{proof}

\section{Decomposition of the Gan-Gross-Prasad problem}\label{sec8}

Let $G$ and $H$ be the classical groups in section \ref{sec6} and section
\ref{sec7}. Let $t\in G^F$ and $s\in H^F$ be semisimple elements. Let $\pi\in\cal{E}(G^F,t)$ and $\sigma\in\cal{E}(H^F,s)$. Additional assumptions $\pm1\notin t$ and $\pm1\notin s$ if $G=\so_{2n+1}$. Then $\pi$ and $\sigma$ are uniform, i.e. they are linear combinations of the Deligne-Lusztig characters as follows:
 \[
 \pi=\sum_{(T^*,t)\ \textrm{mod }G^F} C_{\pi,T^*}  R^G_{T^*,t}
  \]
  and
   \[
 \sigma =\sum_{(S^*,s)\ \textrm{mod }H^F} C_{\sigma,S^*}  R^H_{S^*,s}
  \]
 where the sums run over the geometric conjugacy of $(T^*,t)$ and $(S^*,s)$, respectively. We can therefore calculate the pairing $\langle\pi,\sigma\rangle_{H^F}$ by Reeder's formal.

  Consider the irreducible representations $\pi[a]$ and $\sigma[a]$ defined in section \ref{sec4}. They are also linear combinations of the Deligne-Lusztig characters as follows:
    \[
  \pi[a]=\sum_{(T^*_{[a]},1)\ \textrm{mod }G^*_{[a]}(t)^F}  C_{\pi,T^*_{[a]}}R^{G^*_{[a]}(t)}_{T^*_{[a]},1}
  \]
    and
  \[
  \sigma[a]=\sum_{(S^*_{[a]},1)\ \textrm{mod }H^*_{[a]}(s)^F}  C_{\sigma,S^*_{[a]}}R^{H^*_{[a]}(s)}_{S^*_{[a]},1}
  \]
  where the sums run over the geometric conjugacy of $(T^*_{[a]},1)$ and $(S^*_{[a]},1)$, respectively.

By (\ref{coff}), Proposition \ref{nou}, and Proposition \ref{noso}, we have
\[
\begin{aligned}
\langle\pi,\sigma\rangle_{H^F}
=&\sum_{\mbox{\tiny$\begin{array}{c}(T^*,t)\ \textrm{mod }G^F\\
(S^*,s)\ \textrm{mod }H^F\\
\end{array}$}}
C_{\pi,T^*}\cdot C_{\sigma,S^*} \left\langle  R^G_{T^*,t}, R^H_{S^*,s}\right\rangle_{H^F}\\
=&\sum_{\mbox{\tiny$\begin{array}{c}(T^*,t)\ \textrm{mod }G^F\\
(S^*,s)\ \textrm{mod }H^F\\
\end{array}$}}
\epsilon_{t,s}\cdot C_{\pi,T^*}\cdot C_{\sigma,S^*} \prod_{[a]}\epsilon_{[a]}\langle R^{G'[a]}_{T'[a],\theta[a]\otimes1},R^{H'[a]}_{S'[a],1}\rangle _{H^{\prime }[a]^F}\\
=&\epsilon_{t,s}\epsilon_G\epsilon_H\sum_{\mbox{\tiny$\begin{array}{c}(T,t)\ \textrm{mod }G^F\\
(S,s)\ \textrm{mod }H^F\\
\end{array}$}}
 \prod_{[a]}\epsilon_{[a]}\epsilon_{G^*_{[a]}(t)}\epsilon_{H^*_{[a]}(t)} C_{\pi,T^*_{[a]}}\cdot C_{\sigma,S^*_{[a]}} \left\langle R^{G'[a]}_{T'[a],\theta[a]\otimes1},R^{H'[a]}_{S'[a],1}\right\rangle _{H^{\prime }[a]^F}\\
 =&\epsilon_{t,s}\epsilon_G\epsilon_H \prod_{[a]}\epsilon_{[a]}\epsilon_{G^*_{[a]}(t)}\epsilon_{H^*_{[a]}(t)}\sum_{\mbox{\tiny$\begin{array}{c}((T^*_{[a]},1)\ \textrm{mod }G^*_{[a]}(t)^F\\
(S^*_{[a]},1)\ \textrm{mod }H^*_{[a]}(s)^F\\
\end{array}$}}
 C_{\pi,T^*_{[a]}}\cdot C_{\sigma,S^*_{[a]}} \left\langle R^{G'[a]}_{T'[a],\theta[a]\otimes1},R^{H'[a]}_{S'[a],1}\right\rangle _{H^{\prime }[a]^F}.\\
\end{aligned}
\]
As in (\ref{ta}) and $(\ref{taso})$, we write $(T'[a],\theta[a]\otimes1)$ as $(T''[a]\times T^1[a],\theta[a]\otimes1)$. Since $G'[a]=\GGL_{n[a]+1}$ or $\UU_{n[a]+1}$, there is an $F$-stable levi subgroup $G''[a]\times G^1[a]\subset G'[a]$ such that
\[
\begin{matrix}
T''[a]&\times& T^1[a]&&\\
\\
\downarrow&&\downarrow&&\\
\\
G''[a]&\times& G^1[a]&\subset &G'[a],
\end{matrix}
\]
and $T''[a]$ and $T^1[a]$ are $F$-stable maximal torus of $G''[a]$ and $G^1[a]$, respectively.
By the definition of $H'[a]$ and $G^1[a]$, we know that
\[
H'[a]=
\left\{
 \begin{array}{ll}
 G^*_{[ a]}(t)&\textrm{ if }\nu_{[a]}(t)\ge \nu_{[a]}(s);\\
  H^*_{[ a]}(s)&\textrm{ otherwise};
\end{array}
\right.
\]
and
\[
G^1[a]=
\left\{
 \begin{array}{ll}
 H^*_{[ a]}(t)&\textrm{ if }\nu_{[a]}(t)\ge \nu_{[a]}(s);\\
  G^*_{[ a]}(s)&\textrm{ otherwise}.
\end{array}
\right.
\]
Hence
\[
\begin{aligned}
&\left\langle R^{G'[a]}_{T'[a],\theta[a]\otimes1},R^{H'[a]}_{S'[a],1}\right\rangle _{H^{\prime }[a]^F}
  =&\left\langle R^{G'[a]}_{G''[a]\times G^1[a]}\left(R^{G''[a]}_{T''[a],\theta[a]}\otimes R^{G^1[a]}_{T^1[a],1} \right),R^{H'[a]}_{S'[a],1}\right\rangle _{H^{\prime }[a]^F}.\\
\end{aligned}
\]
For our purpose, it is convinced to set $\theta[a]$ to be a regular character for each $[a]$. Assume that $(T'[a],\theta[a]\otimes1)$ corresponds to $(T'[a],x[a])=(T''[a],x''[a])\times(T^1[a],1)$. Then the geometric conjugacy of $(T''[a],x''[a])$ is a single, and there exists an irreducible representation $\tau\in\cal{E}(G''[a]^F,x''[a])$ such that $\tau=\epsilon''_{[a]} R^{G''[a]}_{T''[a],\theta[a]}$ for some $\epsilon''_{[a]}\in\{\pm 1\}$.

Assume that $\nu_{[a]}(t)< \nu_{[a]}(s)$. Then
  \[
  \begin{aligned}
  &\sum_{\mbox{\tiny$\begin{array}{c}((T^*_{[a]},1)\ \textrm{mod }G^*_{[a]}(t)^F\\
(S^*_{[a]},1)\ \textrm{mod }H^*_{[a]}(s)^F\\
\end{array}$}}
 C_{\pi,T^*_{[a]}}\cdot C_{\sigma,S^*_{[a]}}\left\langle R^{G'[a]}_{G''[a]\times G^1[a]}\left(R^{G''[a]}_{T''[a],\theta[a]}\otimes R^{G^1[a]}_{T^1[a],1} \right),R^{H'[a]}_{S'[a],1}\right\rangle _{H^{\prime }[a]^F}\\
=& \sum_{\mbox{\tiny$\begin{array}{c}((T^*_{[a]},1)\ \textrm{mod }G^*_{[a]}(t)^F\\
(S^*_{[a]},1)\ \textrm{mod }H^*_{[a]}(s)^F\\
\end{array}$}}
\left\langle R^{G'[a]}_{G''[a]\times G^1[a]}\left(R^{G''[a]}_{T''[a],\theta[a]}\otimes C_{\pi,T^*_{[a]}} R^{G^1[a]}_{T^1[a],1} \right),C_{\sigma,S^*_{[a]}}R^{H'[a]}_{S'[a],1}\right\rangle _{H^{\prime }[a]^F}\\
 =&\epsilon''_{[a]}
  \left \langle R^{G'[a]}_{G''[a]\times G^1[a]} \left(\tau\otimes\pi[a]\right),\sigma[a]\right\rangle_{H^{\prime }[a]^F}.\\
\end{aligned}
\]
If $G'[a]$ is a general linear group, then by the definition of $m(\pi[a],\sigma[a])$ and Lemma \ref{gl}, we have
\[
\left\langle R^{G'[a]}_{G''[a]\times G^1[a]} \left(\tau\otimes\pi[a]\right),\sigma[a]\right\rangle_{H^{\prime }[a]^F}= m(\pi[a],\sigma[a]).
\]
Suppose that $G'[a]$ is a unitary group. Then $H'[a]$ and $G^1[a]$ are also unitary groups. Assume that $H'[a]=\UU_{n[a]}$ and $G^1[a]=\UU_{m[a]}$. Recall that $\pi[a]$ and $\sigma[a]$ are unipotent representations. If $n[a]-m[a]$ is odd, then by \cite[Theorem 1.2 and Corollary 1.3]{LW3}, we get
\[
\left\langle R^{G'[a]}_{G''[a]\times G^1[a]} \left(\tau\otimes\pi[a]\right),\sigma[a]\right\rangle_{H^{\prime }[a]^F}= m(\pi[a],\sigma[a])
\]
where $m(\pi[a],\sigma[a])$ is the multiplicity in the Bessel case of the Gan-Gross-Prasad problem. Assume that $n[a]-m[a]$ is even.
By \cite[Corollary 5.3 (ii)]{LW3}, for any irreducible generic representation $\tau'$ of $G^{\prime \prime F}[a]$, we have
\[
\left\langle R^{G'[a]}_{G''[a]\times G^1[a]} \left(\tau\otimes\pi[a]\right),\sigma[a]\right\rangle _{H^{\prime }[a]^F}= \pm\left\langle R^{G'[a]}_{G''[a]\times G^1[a]} \left(\tau'\otimes\pi[a]\right),\sigma[a]\right\rangle _{H^{\prime }[a]^F}.
\]
In particular, by Proposition \ref{irr}, we can pick
 \[
\tau'=\epsilon_{\tau'}R^{\UU_{n[a]+1-m[a]}}_{\UU_1\times\UU_{n[a]-m[a]}}(\tau'_1\otimes\tau'_2)
\]
where $\epsilon_{\tau'}\in\{\pm\}$ and $\tau'_1$ and $\tau'_2$ are certain irreducible generic representations of $\UU_1(\bb{F}_{q^{h[a]}})$ and $\UU_{n[a]-m[a]}(\bb{F}_{q^{h[a]}})$ with $h[a]=\frac{\#[a]}{2}\in\bb{Z}$, respectively. Thus
\[
\begin{aligned}
&\left\langle R^{G'[a]}_{G''[a]\times G^1[a]]} \left(\tau\otimes\pi[a]\right),\sigma[a]\right\rangle_{H^{\prime }[a]^F}\\
=&\pm\left \langle R^{\UU_{n[a]+1}}_{\UU_{m[a]+1}\times \UU_{n[a]-m[a]}} \left(\tau'_2\otimes R^{\UU_{m[a]+1}}_{\UU_1\times\UU_{m[a]}}(\tau'_1\otimes\pi[a])\right),\sigma[a]\right\rangle _{H^{\prime }[a]^F}\\
\end{aligned}
\]
Since we already assume that is $q$ large enough such that the main theorem in \cite{S2} holds, we can pick $\tau'_1\in\cal{E}(\UU_1\fq,s')$ such that $1\notin s'$. By Theorem 1.1 and Theorem 1.2 in \cite{LW3}, we have
\[
\begin{aligned}
&\pm\left \langle R^{\UU_{n[a]+1}}_{\UU_{m[a]+1}\times \UU_{n[a]-m[a]}} \left(\tau'_2\otimes R^{\UU_{m[a]+1}}_{\UU_1\times\UU_{m[a]}}(\tau'_1\otimes\pi[a])\right),\sigma[a]\right\rangle _{H^{\prime }[a]^F}\\
=&m\left(R^{\UU_{m[a]+1}}_{\UU_1\times\UU_{m[a]}}(\tau'_1\otimes\pi[a]),\sigma[a]\right)\\
=& m(\pi[a],\sigma[a])
\end{aligned}
\]
where $m(R^{\UU_{m[a]+1}}_{\UU_1\times\UU_{m[a]}}(\tau'_1\otimes\pi[a]),\sigma[a])$ is the multiplicity in the Bessel case, but $m(\pi[a],\sigma[a])$ is in the Fourier-Jacobi case.

For the $\nu_{[a]}(t)\ge \nu_{[a]}(s)$ case, we have a similar result. In summarizing, we have
 \[
  \begin{aligned}
\langle\pi,\sigma\rangle_{H^F} =\pm\prod_{[a]} m(\pi[a],\sigma[a]).
\end{aligned}
\]
Note that both sides are not negative, so we can remove the sign in the RHS, which completes the proof of Theorem \ref{main2}.

\end{document}